\newcolumntype{C}[1]{>{\centering\let\newline\\\arraybackslash\hspace{0pt}}m{#1}}
\let\SK@label\label\fi
 \let\your@thm\@thm
 \def\@thm#1#2#3{\gdef\currthmtype{#3}\your@thm{#1}{#2}{#3}}
 \def\mylabel#1{{\let\your@currentlabel\@currentlabel\def\@currentlabel
  {\currthmtype~\your@currentlabel}
 \SK@label{#1@}}\label{#1}}
 \def\myref#1{\ref{#1@}}
\newtheorem{theorem}{Theorem}[section]
\newtheorem{prop}[theorem]{Proposition}
\newtheorem{lemma}[theorem]{Lemma}
\newtheorem{cor}[theorem]{Corollary}
\newtheorem{conjecture}[theorem]{Conjecture}
\newtheorem{proposition}[theorem]{Proposition}
\newtheorem{thm}[theorem]{Theorem}
\newtheorem{lem}[theorem]{Lemma}
\theoremstyle{definition}
\newtheorem{definition}[theorem]{Definition}
\newtheorem{note}[theorem]{Note}
\newtheorem{defn}[theorem]{Definition}
\theoremstyle{remark}
\newtheorem{remark}[theorem]{Remark}
\newtheorem{rmk}[theorem]{Remark}
\newtheorem{example}[theorem]{Example}
\newtheorem{question}[theorem]{Question}
\newtheorem{observation}[theorem]{Observation}
\newcommand{\sO}{\mathscr{O}}
\newcommand{\sF}{\mathscr{F}}
\newcommand{\sC}{\mathscr{C}}
\newcommand{\C}{\mathscr{C}}
\newcommand{\sG}{\mathscr{G}}
\newcommand{\sL}{\mathscr{L}}
\newcommand{\sA}{\mathscr{A}}
\newcommand{\sB}{\mathscr{B}}
\newcommand{\sD}{\mathscr{D}}
\newcommand{\sP}{\mathscr{P}}
\newcommand{\sGL}{\mathscr{G}\mathscr{L}}
\DeclareMathOperator{\ox}{\otimes}
\DeclareMathOperator{\id}{id}
\DeclareMathOperator{\Mod}{Mod}
\DeclareMathOperator{\al}{\alpha}
\DeclareMathOperator{\End}{End}
\DeclareMathOperator{\Gal}{Gal}
\DeclareMathOperator{\Aut}{Aut}
\DeclareMathOperator{\Vect}{Vect}
\DeclareMathOperator{\Hom}{Hom}
\DeclareMathOperator{\Map}{Map}
\DeclareMathOperator{\Cat}{\mathrm{Cat}}
\DeclareMathOperator{\Top}{\mathrm{Top}}
\DeclareMathOperator{\Rep}{\mathrm{Rep}}
\DeclareMathOperator{\Mor}{\mathrm{Mor}}
\DeclareMathOperator{\sk}{sk}
\DeclareMathOperator{\Z}{\mathbb{Z}}
\DeclareMathOperator{\bC}{\mathbb{C}}
\DeclareMathOperator{\bK}{\mathbb{K}}
\DeclareMathOperator{\bS}{\mathbb{S}}
\DeclareMathOperator{\R}{\mathbb{R}}
\DeclareMathOperator{\iso}{\mathrm{iso}}
\newcommand{\tha}{\theta}
\newcommand{\be}{\beta}
\newcommand{\ga}{\gamma}
\newcommand{\de}{\delta}
\newcommand{\et}{\eta}
\newcommand{\io}{\iota}
\newcommand{\si}{\sigma}
\newcommand{\GA}{\Gamma}
\newcommand{\SI}{\Sigma}
\newcommand{\OM}{\Omega}
\newcommand{\PI}{\Pi}
\newcommand{\bQ}{{\mathbb{Q}}}
\newcommand{\bR}{{\mathbb{R}}}
\newcommand{\bZ}{{\mathbb{Z}}}
\newcommand{\KK}{{\bf K}}
\newcommand{\cat}{\sC\! at}
\newcommand{\tG}{\widetilde{G}}
\newcommand{\tH}{\widetilde{H}}
\newcommand{\Ch}{\sC\!at(\tilde G,}
\renewcommand{\to}{\longrightarrow}
\newcommand{\ul}{\underline}
\definecolor{orange}{rgb}{1,0.5,0}
\DeclareMathOperator{\oo}{\infty}
\newcommand{\ra}{\rightarrow}
\newcommand{\rtarr}{\rightarrow}
\newcommand{\xra}{\xrightarrow}
\begin{document}

\vspace{-12mm}
\title{Equivariant algebraic K-theory of $G$-rings}

\author{Mona Merling}

\date{\vspace{1ex}}

\maketitle
\vspace{-12mm}

\begin{abstract}
A group action on the input  ring or category induces an action on the algebraic $K$-theory spectrum. However, a shortcoming of this naive approach to equivariant algebraic $K$-theory is, for example, that  the map of spectra with $G$-action induced by a $G$-map of $G$-rings is not equivariant. We define a version of equivariant algebraic $K$-theory which encodes a group action on the input in a functorial way to produce a \emph{genuine} algebraic $K$-theory $G$-spectrum for a finite group $G$. The main technical work lies in studying coherent actions on the input category. A payoff of our approach is that it builds a unifying framework for equivariant topological $K$-theory, Atiyah's Real $K$-theory, and existing statements about algebraic $K$-theory spectra with $G$-action. We recover the map from the Quillen-Lichtenbaum conjecture and the representational assembly map studied by Carlsson and interpret them from the perspective of equivariant stable homotopy theory. 
\end{abstract}


\setcounter{tocdepth}{1}
\tableofcontents

\section{Introduction}

Algebraic $K$-theory of rings is an intensely studied invariant because of its deep commections with algebraic geometry and number theory. The study of algebraic $K$-theory is linked to equivariant stable homotopy theory: one of the main computational approaches is approximating $K$-theory by topological cyclic homology, which in many cases can be computed using tools of genuine $S^1$ equivariant stable homotopy theory. However, these methods do not take into account an inherent action on the input ring (or category).
The aim of this paper is to study algebraic $K$-theory from the genuine equivariant perspective: we construct and study a genuine algebraic $K$-theory $G$-spectrum in the case when a finite group $G$ acts on the input ring.
 
Galois group actions have provided organizing principles for studying algebraic $K$-theory. It has long been suspected that the $K$-theory of a field should be computable in terms of the $K$-theory of the algebraic closure and the action of the absolute Galois group -- one of the early Quillen-Lichtenbaum conjectures was that the map from fixed points to homotopy fixed points of naive $G$-spectra is an equivalence after $p$-completion. Thomason later showed that in order to obtain an equivalence one needs to invert a ``Bott" element in $K$-theory and reduce mod a prime power. The concept of descent and the Quillen-Lichtenbaum conjecture has motivated Carlsson's program to study the $K$-theory of fields in terms of the representational assembly map for a Galois extension $E/F$ induced by tensoring a $G$-representations over $F$ with $E$. In this paper we provide the framework that allows us to interpret these maps as maps of genuine $G$-spectra or their fixed points, thus making the tools of stable equivariant homotopy theory directly available for the study of $K$-theory. 
  
  We describe our philosophy for defining equivariant algebraic $K$-theory. If the input has a $G$-action, this induces a $G$-action on the category that one builds algebraic $K$-theory out of. For example, if $R$ is a $G$-ring, then the category of finitely generated projective $R$-modules and isomorphisms $\iso \sP(R)$ has a $G$-action: for a module $M$, $gM$ is defined by twisting the scalar multiplication on $R$ by $g$. One similarly obtains a $G$-action on the category of modules over a $G$-ring spectrum $R$. 
  However, by applying the nonequivariant constructions to this category with $G$-action, we obtain just  a spectrum with $G$-action, and not a genuine $G$-spectrum -- the $K$-theory $G$-space we obtain has deloopings with respect to all spheres $S^n$ with trivial $G$-action, but it does not deloop with respect to representation spheres $S^V$. We need to modify these categories with $G$-action to turn them into ``genuine" $G$-categories, very loosely speaking. We try to make this more precise in the next paragraphs. 
 
Nonequivariantly, the algebraic $K$-theory space of $R$ is defined as the group completion of the classifying space of the symmetric monoidal category $\iso \sP(R)$, and this space is delooped using an infinite loop space machine such as  the operadic one developed by May in \cite{MayGeo} or the one based on $\Gamma$-spaces developed by Segal in \cite{segal}. These nonequivariant machines are equivalent by a celebrated theorem of May and Thomason \cite{MT}. The Segalic infinite loop space machine has been generalized equivariantly by Shimakawa in \cite{Shimakawa}, and the operadic infinite loop space machine has been generalized equivariantly by Guillou and May in \cite{GM3}, to give \emph{genuine}  $\OM$-$G$-spectra with zeroth space the group completion of the input category. We describe these machines in \S 5 where we use them, and we note that we have shown in \cite{MMO} that when fed equivalent input, they produce equivalent $G$-spectra.
But, the input these equivariant infinite loop space machines take is not simply symmetric monoidal categories with $G$-action -- their input is \emph{genuine symmetric monoidal $G$-categories}. Genuine permutative $G$-categories have been defined in \cite{GM3} as algebras over an equivariant version of the Barrat-Eccles operad, and we have defined genuine symmetric monoidal $G$-categories as pseudo algebras over the same operad in \cite{GMMO}. We will not dwell on this since all the genuine symmetric monoidal $G$-categories we consider in this paper arise in the concrete way described in the following paragraph. 
 
 \begin{definition}  Let $\tG$ be the category with objects the elements of $G$ and a unique morphism between any two objects, with $G$ acting by translation on the objects and diagonally on the morphisms. For a $G$-category $\C$, let $\cat(\tG, \C)$ be the category of all functors and all natural transformations, with $G$ acting by conjugation. \end{definition}
Note that $\tG$ is $G$-isomorphic to the translation category of $G$, and its classifying space $B\tG$ is equivalent to the total space $EG$. If $\C$ is a symmetric monoidal category with $G$-action, then it turns out that $\cat(\tG, \C)$ is an example of a genuine symmetric monoidal $G$-category, and therefore, it is input for the equivariant infinite loop space machines. We will show that replacing a symmetric monoidal category with $G$-action  $\C$ with $\cat(\tG, \C)$ not only makes it a genuine symmetric monoidal category, but it also fixes coherence issues that arise equivariantly. Even if the action does not preserve the symmetric monoidal structure strictly, but only up to coherent isomorphism, this can be rectified after applying $\cat(\tG, -)$.   
 
 We define  the equivariant algebraic $K$-theory $G$-spectrum $\KK_G(R)$ of a $G$-ring $R$ as the $\OM$-$G$-spectrum obtained by applying one of the equivariant infinite loop space machines to the category $\cat(\tG, \iso\sP(R)).$ We summarize some of the properties of $\KK_G(R)$ that we prove.
 
\begin{theorem} For finite groups $G$, the assignment $$R\mapsto \KK_G(R)$$ can be extended to a functor from $G$-rings and $G$-maps to genuine  (connective) $\Omega$-$G$ spectra, with the following properties 
\begin{enumerate} 
\item For the topological rings $\mathbb{C}$ and $\mathbb{R}$ with trivial $G$-action for any finite group $G$,  
$${\bf K_G}(\mathbb{C})\simeq ku_G \text{   and     } \  {\bf K_G}(\mathbb{R})\simeq ko_G,$$ where $ku_G$ and $ko_G$ are connective versions of equivariant topological $K$-theory;

\item For the topological ring $\mathbb{C}$ with $C_2$ conjugation action \
$${\bf K_{C_2}}(\mathbb{C})\simeq kr,$$ where $kr$ is a connective version of Atiyah's Real $K$-theory;

\item  If $|H|^{-1}\in R$, then  $$ {\bf K_G}(R)^H\simeq {\bf K}(R_H[H]),$$ where  $R_H[H]$ is the twisted group ring; 
\item For a Galois extension of rings $R\rightarrow S$  with Galois group $G$,  $${\bf K_G}(S)^G\simeq {\bf K}(R);$$

\item $\KK_G$ is invariant under a suitable notion of equivariant Morita equivalence;

\item For a finite Galois extension with group $G$, the map from fixed points to homotopy fixed points of {\bf genuine} $G$-spectra $${\bf K_G}(E)^G\to {\bf K_G}(E)^{hG}$$ is equivalent to the map from fixed points to homotopy fixed points of {\bf naive} $G$-spectra  \newline $\KK(F)\ra \KK(E)^{hG} $ from the Quillen-Lichtenbaum conjecture. 

\item For a finite Galois extension with group $G$, the representational assembly map defined by Carlsson 
\[
{\bf K}(\mathrm{Rep}_F[G])\ra {\bf K}F
\] 
is the fixed point map of a $G$-map of genuine $G$-spectra 

$${\bf K_G}(F)\to {\bf K_G}(E).$$ \end{enumerate}
\end{theorem}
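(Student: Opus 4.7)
The plan is to establish functoriality first and then cluster the seven properties into three groups: the topological identifications (1)--(2), the categorical fixed-point computation (3)--(4) that carries most of the work, and the corollaries (5)--(7). Functoriality follows because $\cat(\tG,-)$ carries $G$-equivariant functors between $G$-categories to morphisms of genuine symmetric monoidal $G$-categories; composing with the equivariant infinite loop space machine of Shimakawa or Guillou--May, which is itself functorial, sends a $G$-map $R \to R'$ of $G$-rings to a map $\KK_G(R) \to \KK_G(R')$ of genuine $\Omega$-$G$-spectra.

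For parts (1) and (2), the strategy is to identify the input category with one equivalent to finite-dimensional $G$-representations in the appropriate sense. A functor $\tG \to \iso\sP(\mathbb{C})$ together with the $G$-conjugation action packages precisely a finite-dimensional complex vector space equipped with a linear $G$-action, and the genuine symmetric monoidal structure inherited from $\cat(\tG,-)$ matches direct sum of representations. The equivariant machine then outputs the standard $\Omega$-$G$-spectrum model of $ku_G$ by equivariant Segal-style recognition; the real case gives $ko_G$. Part (2) is the same argument with $G=\mathbb{Z}/2$ acting on $\mathbb{C}$ by complex conjugation, where the twist by $\tG$ combined with conjugation yields exactly Real vector spaces in the sense of Atiyah.

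The main technical work is part (3). I would compute the fixed category $\cat(\tG,\iso\sP(R))^H$ by unpacking an $H$-fixed functor $F:\tG \to \iso\sP(R)$: evaluation at the identity yields an $R$-module $M = F(e)$, and the natural isomorphisms $F \cong h\cdot F$ witnessing $H$-fixedness, together with their compatibility conditions, endow $M$ with an $H$-action semilinear over the $G$-action on $R$ --- precisely the data of an $R_H[H]$-module. The hypothesis $|H|^{-1}\in R$ enters through a Maschke-style argument ensuring that projective $R_H[H]$-modules restrict to projective $R$-modules, so the equivalence descends to finitely generated projectives. Invoking the fact that the chosen equivariant machine realizes $H$-fixed points of the output as the nonequivariant machine applied to the fixed subcategory yields $\KK_G(R)^H \simeq \KK(R_H[H])$. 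Part (4) then follows by specializing $H=G$ and using the classical Galois isomorphism $S\otimes_R S \cong \prod_G S$, which gives $S_G[G] \cong \End_R(S)$, and then Morita invariance from part (5) gives $\KK(\End_R(S)) \simeq \KK(R)$.

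For part (5) I would verify Morita invariance directly at the level of input categories, showing that a Morita equivalence of $G$-rings induces a $G$-equivalence after applying $\cat(\tG,-)$. Parts (6) and (7) are then formal consequences: the underlying naive $G$-spectrum of $\KK_G(E)$ is $\KK(E)$ with its induced $G$-action, so homotopy fixed points of the genuine and naive spectra agree, while part (4) identifies the genuine fixed points with $\KK(F)$, exhibiting the Quillen--Lichtenbaum map as the fixed-to-homotopy-fixed-point comparison for $\KK_G(E)$. For the representational assembly, part (3) applied with trivial $G$-action on $F$ and $H=G$ gives $\KK_G(F)^G \simeq \KK(F[G]) \simeq \KK(\Rep_F[G])$, and naturality of fixed points in the map $\KK_G(F)\to\KK_G(E)$ recovers Carlsson's map. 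The principal obstacle throughout is part (3): ensuring that the chosen equivariant infinite loop space machine genuinely computes fixed points as group completions of the categorical fixed-point subcategory in a manner compatible with the pseudo-algebra structure on $\cat(\tG,\iso\sP(R))$, which is where the coherence analysis from \cite{GMMO} and the machine-comparison results of \cite{MMO} are essential.
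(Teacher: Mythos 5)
Your overall organization is sensible and the identification in part (3) of the $H$-fixed category with modules over $R_H[H]$ is the right computation, but there are several genuine gaps that mirror the places where the paper has to do real work.

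The most serious is the functoriality claim at the outset. You assert that $\cat(\tG,-)$ carries $G$-equivariant functors between $G$-categories to morphisms of genuine symmetric monoidal $G$-categories, and that this together with functoriality of the machine finishes the argument. The problem is that a $G$-map of $G$-rings $R\to S$ does \emph{not} induce a $G$-equivariant functor $\sP(R)\to\sP(S)$: the extension-of-scalars functor $M\mapsto M\otimes_R S$ satisfies only $gM\otimes_R S\cong g(M\otimes_R S)$, not equality, since the $S$-module structures differ. The paper's entire notion of a pseudo equivariant functor exists precisely to repair this: one checks that $-\otimes_R S$ is pseudo equivariant (the compatibility of the isomorphisms uses that $f(t^g)=f(t)^g$), and then \myref{pseudo equiv} produces an honest $G$-map $\cat(\tG,\sP(R))\to\cat(\tG,\sP(S))$. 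Without recognizing this, there is no map of $G$-spectra to speak of, so parts (5), (7), and the functoriality statement itself are not yet established.

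For part (4) you route through part (3) with $H=G$ to get $\KK_G(S)^G\simeq\KK(S_G[G])$ and then use the Galois isomorphism $S_G[G]\cong\End_R(S)$ plus Morita invariance. But part (3) carries the hypothesis $|G|^{-1}\in R$, and a Galois extension of rings need not satisfy it. The paper avoids this by proving part (4) directly via faithfully flat descent: the homotopy fixed point category $\cat(\tG,\sP(S))^G$ is the category of finitely generated projective $S$-modules with semilinear $G$-action, which for a $G$-Galois extension is equivalent to $\sP(R)$ by descent (\myref{proj galois descent}) with no invertibility assumption. Your route could be rescued by arguing that in the Galois case $\sP(S)^{hG}\simeq\sP(S_G[G])$ holds anyway (both are identified with $\sP(R)$), but that would be circular as you've structured the argument.

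Two smaller points. For parts (1)--(2), identifying the $G$-fixed category with $G$-representations only pins down the fixed-point homotopy types; the paper instead uses the equivariant bundle classification result (\myref{bundle thm}) to identify $B\cat(\tG,\sGL(\bC))$ with the classifying space of $(G,U_n)$-bundles, which is what actually determines the genuine $G$-homotopy type and hence the equivalence with $ku_G$, $ko_G$, $kr$ as genuine spectra. For part (6), the key content is that the underlying naive $G$-spectrum of $\KK_G(E)$ agrees with $\KK(E)$ equipped with its Galois action; this relies on Serre's form of Hilbert 90 ($H^1(G,GL_n(E))$ trivial) to show $\io\colon\sGL(E)\to\cat(\tG,\sGL(E))$ is a weak $G$-equivalence (\myref{field equiv}), which you do not address.
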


 In order to deduce the first two results about topological rings, we connect the definition that we give of equivariant algebraic $K$-theory to equivariant bundle theory. Nonequivariantly,  Quillen's plus construction $BGL(R)^+$ is the zeroth component of the group completion of the monoid of classifying spaces of principal $GL_n(R)$-bundles. Equivariantly, we show that the $K$-theory space (which we define in terms of the $G$-category of projective modules) is also equivalent to the equivariant group completion of the monoid of, in this case, equivariant $GL_n(R)$-bundles. For this we use the models for equivariant bundles that we have found in \cite{GMM}. This connection allows us to recover  equivariant topological real and complex $K$-theory, and Atiyah's Real $K$-theory as examples of our construction.

For the rest of the results stated above, we need to analyze the fixed point spectrum $\KK_G(R)^H$ for subgroups $H\subseteq G$. One of the formal properties of the equivariant infinite loop space machines is that they commute with fixed points, so our task amounts to studying the fixed point categories $\cat(\tG, \C)^H$ for suitable $G$-categories $\C$. By analogy with the homotopy fixed point set of a $G$-space, we define the \emph{homotopy fixed points} of a $G$-category $\sC$ as the fixed point category $\cat(\tG, \sC)^G$, which we introduce and study in \S 2. This is the category of $G$-equivariant functors and natural transformations, which Thomason called the lax limit of the category $\C$ in \cite{homotopylimit}. However, we shift perspective: our philosophy is to work with the equivariant object $\cat(\tG, \C)$, as opposed to just restricting attention to its fixed points. We don't merely study the $H$-fixed points of  $\cat(\tG, \C)$, which are the $H$-homotopy fixed points of $\C$, but we also study how  homotopy fixed point categories relate, and for this it is convenient  to study $G$-maps between the $G$-categories  $\cat(\tG, \C)\ra \cat(\tG, \sD)$. 

In \S 4 we study the homotopy fixed point categories of module categories of $G$-rings and then we exploit these results in \S 5, \S 6, and \S 7 to draw the conclusions about the equivariant algebraic $K$-theory of $G$-rings described above. As an accidental corollary of our results about homotopy fixed points of module categories, we obtain an alternative proof of Serre's generalization of Hilbert's theorem 90, which we give in \S 6.2. The proof in the same spirit of Deligne's alternative proof of the original statement of Hilbert 90 from \cite{sga} using faithfully flat descent. 

  One property that one would expect of the homotopy fixed points of a category (which also justifies the name) is that they are homotopy invariant. We show that this is so.
  
  \begin{prop}
  A $G$-map, which is a nonequivariant equivalence of categories, induces equivalences of categories on homotopy fixed points. \end{prop}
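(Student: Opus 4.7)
The plan is to verify that the functor $F_\ast\colon \cat(\tG,\sC)^G \to \cat(\tG,\sD)^G$ given by postcomposition with $F$ is both fully faithful and essentially surjective. An object of $\cat(\tG,\sC)^G$ is an equivariant functor $\Phi\colon \tG \to \sC$, and a morphism $\eta\colon \Phi \to \Phi'$ is a natural transformation whose components $\eta_x\colon \Phi(x)\to \Phi'(x)$ satisfy the fixed-point condition $g\cdot \eta_{g^{-1}x} = \eta_x$.

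For fully faithfulness, I would observe that nonequivariant full faithfulness of $F$ gives, for each $x \in \ob\tG$, a bijection $\Hom_\sC(\Phi(x),\Phi'(x)) \to \Hom_\sD(F\Phi(x), F\Phi'(x))$. Naturality in $x$ is preserved and reflected because $F$ is faithful, and the $G$-equivariance condition is preserved and reflected because $F$ commutes with the $G$-actions on morphism sets. So $F_\ast$ induces a bijection on hom-sets of fixed-point categories.

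For essential surjectivity, given an equivariant $\Psi\colon \tG \to \sD$, I would use essential surjectivity of $F$ to choose an object $c \in \sC$ and an isomorphism $\beta\colon F(c) \xrightarrow{\sim} \Psi(e)$. Since $F$ is $G$-equivariant and $\Psi$ is equivariant, for each $g \in G$ the morphism $g\cdot \beta\colon F(g\cdot c) \to \Psi(g)$ is an isomorphism. Using full faithfulness of $F$, define $\Phi\colon \tG \to \sC$ on objects by $\Phi(g) = g\cdot c$ and on the unique morphism $g \to h$ of $\tG$ by lifting $(h\cdot \beta)^{-1}\circ \Psi(g\to h)\circ (g\cdot \beta)$ uniquely through $F$. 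Functoriality of $\Phi$ follows by faithfulness of $F$ from functoriality of $\Psi$, and $G$-equivariance $g'\cdot \Phi(g\to h) = \Phi(g'g\to g'h)$ follows similarly, using that the $G$-action on morphisms in $\sD$ carries $(h\cdot \beta)^{-1}\circ \Psi(g\to h)\circ (g\cdot \beta)$ to $(g'h\cdot \beta)^{-1}\circ \Psi(g'g\to g'h)\circ (g'g\cdot \beta)$. By construction the family $\{g\cdot \beta\}_{g\in G}$ assembles into a $G$-equivariant natural isomorphism $F_\ast \Phi \cong \Psi$.

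I expect the only real obstacle to be bookkeeping, namely confirming that the transported $\Phi$ is honestly $G$-equivariant rather than equivariant only up to a hidden ambiguity. This is controlled by two facts: $\tG$ has a unique morphism between any pair of objects, so the lift $\Phi(g\to h)$ through $F$ is unambiguous; and every identity to be verified in $\sC$ becomes, after applying $F$, an identity about $\Psi$, $\beta$, and the $G$-action in $\sD$ that already holds, so faithfulness of $F$ transports it back to $\sC$.
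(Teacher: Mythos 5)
Your proof is correct and takes essentially the same approach as the paper's: both reduce to showing that postcomposition with the functor induces an equivalence on $G$-fixed point categories, establishing faithfulness directly from faithfulness of the given functor, fullness by lifting the components through full faithfulness and checking the naturality/equivariance constraints via faithfulness, and essential surjectivity by choosing an isomorphism $\beta$ over the identity object and lifting the induced morphisms. The only cosmetic difference is that you work directly with functors $\tG\to\sC$, while the paper uses the equivalent explicit $(C,f)$-pair description of the homotopy fixed point category from \myref{wrong homotopy fixed points}.
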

  
  Another property of the $\cat(\tG, -)$ construction, which is more surprising maybe, and which is at the heart of our results  is that it turns maps for which equivariance holds up to isomorphism into on the nose equivariant maps. In \S 3, we define the notion of a pseudo equivariant functor between $G$-categories as a functor which commutes with the $G$-action only up to coherent isomorphism. Very precisely, if one regards $G$-categories as functors $G\ra \cat$, then an equivariant map of $G$-categories is a natural transformation between these and a pseudo equivariant map is a pseudo natural transformation. The main result of that section is as follows.
  \begin{prop}\mylabel{pseudo intro}
 Given a pseudo equivariant functor of $G$-categories $\C\ra \sD$, there is an induced \emph{on the nose equivariant} functor  $\cat(\tG, \C) \ra \cat(\tG, \sD)$, so there are  induced maps on homotopy fixed point categories $\C^{hH}\ra \sD^{hH}$ for all subgroups $H$ of $G.$ \end{prop}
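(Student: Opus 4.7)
The plan is to construct the rectifying functor $\tilde F\colon \cat(\tG,\C)\to \cat(\tG,\sD)$ explicitly by using the coherence data of the pseudo-equivariant structure as a cocycle to twist the assignment on objects; this is the categorical analogue of the classical recipe that turns a cocycle into a strictly equivariant gadget.

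To fix notation, I think of an object of $\cat(\tG,\C)$ as a family $\{X_g\}_{g\in G}$ in $\C$ together with coherent isomorphisms $\xi^X_{g,h}\colon X_g\to X_h$ satisfying $\xi^X_{h,k}\circ\xi^X_{g,h}=\xi^X_{g,k}$; a morphism $\phi=\{\phi_g\}$ is a family of maps commuting with the $\xi$'s; and the conjugation action is $(aX)_g=a\cdot X_{a^{-1}g}$. A pseudo equivariant $F\colon \C\to \sD$ is equipped with natural isomorphisms $\alpha^x_g\colon F(g\cdot x)\to g\cdot F(x)$ satisfying $\alpha^x_e=\id$ and the cocycle identity $\alpha^x_{ab}=(a\cdot\alpha^x_b)\circ \alpha^{bx}_a$. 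I would then define $\tilde F$ by
\[
\tilde F(X)_g:=g\cdot F(g^{-1}\cdot X_g),\qquad \tilde F(\phi)_g:=g\cdot F(g^{-1}\cdot\phi_g),
\]
and let the structural isomorphism $\tilde F(X)_{g,h}$ be the composite
\[
gF(g^{-1}X_g)\xrightarrow{gF(g^{-1}\xi^X_{g,h})}gF(g^{-1}X_h)=gF((g^{-1}h)(h^{-1}X_h))\xrightarrow{g\alpha^{h^{-1}X_h}_{g^{-1}h}}hF(h^{-1}X_h).
\]
Strict $G$-equivariance on objects is then immediate: both $\tilde F(aX)_g$ and $(a\tilde F(X))_g$ unwind to $g\cdot F((a^{-1}g)^{-1}\cdot X_{a^{-1}g})$, and the same substitution handles the $\xi$'s and the $\phi$'s. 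Functoriality of $\tilde F$ on morphisms of $\cat(\tG,\C)$ is inherited directly from functoriality of $F$.

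The main obstacle is checking that $\tilde F(X)$ is itself a functor $\tG\to\sD$, i.e.\ that $\tilde F(X)_{h,k}\circ\tilde F(X)_{g,h}=\tilde F(X)_{g,k}$. I would expand the left-hand side, apply naturality of $\alpha_{g^{-1}h}$ with respect to the morphism $h^{-1}\xi^X_{h,k}$ of $\C$ to slide $hF(h^{-1}\xi^X_{h,k})$ past $g\alpha^{h^{-1}X_h}_{g^{-1}h}$, and collapse the two resulting $gF(g^{-1}\xi^X_{-,-})$ terms using functoriality of $F$ together with $\xi^X_{h,k}\circ\xi^X_{g,h}=\xi^X_{g,k}$. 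What remains is the identity
\[
h\alpha^{k^{-1}X_k}_{h^{-1}k}\circ g\alpha^{h^{-1}X_k}_{g^{-1}h}=g\alpha^{k^{-1}X_k}_{g^{-1}k},
\]
which is precisely the cocycle identity applied to $a=g^{-1}h$, $b=h^{-1}k$, $x=k^{-1}X_k$ after whiskering by $g$, while $\alpha_e=\id$ handles identities. Having verified strict equivariance, passing to $H$-fixed points yields the claimed functor $\C^{hH}=\cat(\tG,\C)^H\to \cat(\tG,\sD)^H=\sD^{hH}$.
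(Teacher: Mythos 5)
Your construction produces the same functor as the paper's, just presented in a form that shifts where the work lands. The paper defines $\tilde\Theta(F)(h,g)$ as the conjugate $\theta_{h}\circ\Theta(F(h,g))\circ\theta_{g}^{-1}$, which makes functoriality of $\tilde\Theta(F)\colon\tG\to\sD$ automatic by telescoping, and then spends its effort on a sizeable diagram chase to verify strict equivariance (using naturality of $\theta_h$ and the pentagon). You instead fuse $\theta_{g'}\circ\theta_g^{-1}$ into the single $g\alpha^{\,h^{-1}X_h}_{g^{-1}h}$ — the two formulas are equal by exactly the cocycle identity applied at $g'=g(g^{-1}g')$ together with naturality — so strict equivariance becomes a literal substitution, but you must now check by hand that $\tilde F(X)$ is functorial (your ``main obstacle''), which again comes down to the cocycle identity whiskered by $g$. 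Both routes are correct and require the same ingredients (naturality of the coherence cells plus the cocycle condition); the trade-off is purely about whether you pay for it in the functoriality check or in the equivariance check. Your sketch of the remaining identity
$h\alpha^{k^{-1}X_k}_{h^{-1}k}\circ g\alpha^{h^{-1}X_k}_{g^{-1}h}=g\alpha^{k^{-1}X_k}_{g^{-1}k}$
is exactly the whiskered cocycle, so the argument is sound; the only thing worth spelling out for a reader is that the ``same substitution'' claim for the $\xi$'s does require noting that the $\alpha$-term in $\tilde F(aX)_{g,h}$ and in $(a\tilde F(X))_{g,h}$ literally coincide after cancellation of $a$ and $a^{-1}$, which is true but not entirely content-free.
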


We showcase some of the applications of the result about pseudo equivariant functors. For example, the extension of scalars map between the module categories of $G$-rings (with actions defined a few paragraphs above) along a $G$-map of rings, is not equivariant, but only pseudo equivariant. Because this  allows us to construct  an on the nose equivariant functor after applying $\cat(\tG, -)$ to our module categories, we can ensure that we actually get a functor from the category of $G$-rings to the category of $G$-spectra. The definition of equivariant Morita equivalence given in \S \ref{morita}, which equivariant algebraic $K$-theory is invariant under, is also in terms of a pseudo equivariant functor.  We have claimed above that applying $\cat(\tG, -)$ rectifies an action that preserves the symmetric monoidal structure of a category $\C$ to an action that preserves it strictly. This is also an application of the same result: the functor \linebreak $\C\times \C \ra \C$ that gives the symmetric monoidal structure is pseudo equivariant. 


In upcoming work with C. Malkiewich \cite{CaryMona} we extend this work on equivariant algebraic $K$-theory -- we define and study equivariant $A$-theory, and \myref{pseudo intro} is  essential for studying Waldhausen $G$-categories.
 The fixed point categories of a Waldhausen category $\C^{H}$ are not Waldhausen, because the action does not preserve the zero object or the pushouts strictly, and the result that a pseudo equivariant functor can be strictified to an equivariant functor after applying $\cat(\tG, -)$ allows us to show that the homotopy fixed point categories $\C^{hH}$ \emph{are} Waldhausen categories. Moreover, using the \myref{pseudo intro} we show in \cite{CaryMona} how one gets transfer ``wrong way" maps between the homotopy fixed point categories, and use them to construct  one version of equivariant $A$-theory as a ``spectral Mackey functor." We point out that C. Barwick has also given a definition of equivariant $K$-theory  in \cite{Gmonster} using ``spectral Mackey functors," in the setting of $\infty$-categories, and we hope we will be able to give a comparison in the future.

We conclude the introduction with two technical remarks. Note that everywhere we need to take classifying spaces of categories that are clearly not small. Nonequivariantly, it is always assumed in $K$-theory that when we take the classifying space of a category which is not small, such as $\sP(R)$, $\sF(R)$, or $\Mod(R)$, we are tacitly replacing the category by a small category, which is equivalent to it, such as its skeleton. The situation is a little trickier equivariantly, because we do not have an equivariant equivalence between a $G$-category and its skeleton. This is too much to hope for; however, we show in \S \ref{skeleton} that there is a weak $G$-equivalence $\cat(\tG, \iso\sF(R)) \simeq \cat(\tG, \sGL(R))$. In \S \ref{skeleta} we generalize this to any $G$-category -- this is even more subtle  because unlike in the case of free modules where we show that $R^n\cong gR^n$, in general, an object $C$ is not necessarily isomorphic to $gC$. We show that for a $G$-category $\C$, we can put a $G$-action on the skeletal category $\sk \C$, such that we get a weak $G$-equivalence  $\cat(\tG, \C)\to \cat(\tG, \sk \C)$. This suffices for our applications, because in equivariant algebraic $K$-theory we are only taking classifying spaces of $G$-categories of the form $\cat(\tG, \C)$.

We end with a remark about the group $G$. All of our categorical work on homotopy fixed points of $G$-categories works for any topological group $G$. However, in order to pass the statements to the spectrum level, we have to restrict to finite groups because of the limitations of the equivariant infinite loop space machines. We do hope that these limitations can be overcome in the near future, at least for profinite groups.

\subsection*{Acknowledgements}
We would like to thank Peter May for suggesting equivariant algebraic $K$-theory as a thesis topic; Bertrand Guillou, Peter May, and Ang{\'e}lica Osorno for all the joint work that has fed into this project;  Ang{\'e}lica Osorno  for adding categorical sophistication to some of this work; 
Andrew Blumberg, Lars Hesselholt and Mike Hill, for sharing their invaluable insights on innumerable occasions; Dan Grayson, for sharing his intuition and explaining alternative approaches to some of our results,  and Clark Barwick for sharing his perspective on equivariant algebraic $K$-theory and  for the many discussions about how our approaches compare. We would also like to thank Emanuele Dotto, Cary Malkiewich, Lennart Meier, Daniel Sch\"{a}ppi and Aaron Royer for many great conversations on the topics of this paper and related work in progress on equivariant $K$-theory for Waldhausen $G$-categories.

 \section{ Homotopy fixed points of a category}
 
By analogy with the homotopy fixed points for a $G$-space we define the homotopy fixed points of a $G$-category. These were also studied by Thomason under the name \emph{``lax limit"} in \cite{homotopylimit}. However, we take an equivariant point of view: for us, the homotopy fixed points are the actual fixed points of a $G$-category, and we study this equivariant object as opposed to just restricting attention to the fixed points. 
  
  \subsection{Preliminaries on $G$-categories}
Concisely, a $G$-category can be defined as a functor $G\ra \cat$. Explicitly, the data of such a functor is a category $\sC$, and for each $g\in G$, an endofunctor
$(g\cdot) \colon \sC\ra \sC$ such that $(e\cdot) = \id_\sC$ and  $(g\cdot) \circ (h\cdot) = (gh)\cdot$. By slight abuse, we will often call the category $\C$ a $G$-category, which means we are implicitly thinking of the action endofunctors $(g\cdot)$. Sometimes  we might omit the ``$\cdot$" from the notation and write $gC$ or $gf$ to denote the action of $g$ on an object $C$ or a morphism $f$. 
A natural transformation of functors $G\ra \cat$ translates to a functor between the two $G$-categories which commutes with the $G$-action. We denote the category of $G$-categories and $G$-equivariant functors by $G\cat$.
  
For subgroups $H\subseteq G$, we define the $H$-fixed point category $\sC^H$  of a $G$-category $\C$ as  the subcategory  with objects those $C \in \sC$ such that
$hC = C$ and morphisms those $f\in\sC$  such that $hf = f$ for all $h\in H$. This definition coincides with the categorical definition as $\lim_H \C$ when we think of $\C$ as a functor $G\ra \cat$. A crucial fact is that the classifying space functor $B\colon \cat\ra \Top$ commutes with fixed points, namely 
\begin{equation}\label{B fixed points} B(\C^H) =(B\C)^H.\end{equation} 

\begin{definition}\mylabel{weak equiv}
A functor between $G$-categories $F\colon \C\ra \sD$ is \emph{a weak $G$-equivalence} if  it induces a weak $G$-equivalence on classifying spaces $BF\colon B\C \ra B\sD.$ \end{definition}

 \subsection{$G\cat$ as a 2-category}\label{2cat}

 We may  view $\cat$ as the $2$-category of categories, with $0$-cells, $1$-cells,
and $2$-cells the categories, functors, and natural transformations.  From
that point of view, $\cat$ is enriched over itself: the internal hom, $\cat(\sA,\sB)$, is the category whose objects are the functors $\sA\rtarr \sB$ and whose morphisms are the natural transformations 
between them.

 Similarly, we may view $G\cat$ as the underlying $2$-category of a category enriched 
over $G\cat$. The $0$-cells are $G$-categories, and the internal hom between them is the
$G$-category $\cat(\sA,\sB)$. Its 
underlying category is $\cat(\sA,\sB)$, and $G$ acts by conjugation on 
functors and natural transformations.  Thus, for $F\colon \sA\rtarr \sB$, 
$g\in G$, and $A$ either an object or a morphism of $\sA$, 
$(gF)(A) = gF(g^{-1}A)$.  Similarly, for a natural transformation 
$\et\colon E\rtarr F$ and an object $A$ of 
$\sA$, 
$$(g\et)_A = g\et_{g^{-1}A}\colon g E(g^{-1}A)\rtarr gF(g^{-1}A).$$
The category $G\cat(\sA,\sB)$ of $G$-equivariant functors and $G$-equivariant natural transformations is the same as the $G$-fixed category 
$\cat(\sA,\sB)^G$.

\begin{rmk}\label{topological}
We can topologize the definitions so far, starting with the $2$-category of categories 
internal to the category $\Top$, together with continuous 
functors and continuous natural transformations.  A \emph{topological $G$-category} $\sA$ is a category  internal to the cartesian monoidal category $G\Top$. It has object and morphism $G$-spaces and continuous $G$-equivariant source, target, identity and composition structure maps respecting the usual category axioms. 
These are more general than (small) topologically enriched categories, which have discrete sets of objects. 
\end{rmk}

 \subsection{The functor $\cat(\tG,-)$ and  homotopy fixed points of categories}\label{catg}

\begin{definition}
For a topological group $G$, define $\tG$ to be the topological $G$-groupoid with object space  $G$ and morphism space $G\times G$. The source and target maps are the projections onto the two factors.\end{definition}
 Thus the objects of $\tG$ are the elements of $G$ and there is a unique morphism between any two objects. We choose to label the unique morphism $g\rightarrow h$ by the pair $(h,g)$ in order to be consistent with \cite{GMM}. The idea is that reversing the order of source and target makes the notation for composition more transparent: $(g,h)\circ (h, k)=(g, k).$ The $G$-action on $\tG$ is given by translation on the objects, which forces it to be diagonal on morphisms, since $g(h\ra k)$, namely $g(k,h)$ must be the unique map $gh \ra gk$, namely $(gk, gh)$. 

\begin{definition} Define the translation category of $\ul{G}$ of $G$ in the standard way as having object space $G$ and morphism space $G\times G$, with the morphism $h\ra gh$ labeled by $(g, h)$.\end{definition}

Again, since there is a unique morphism between any two objects, the $G$-action on objects by translation completely determines the action on the morphism space: $G$ acts on the second coordinate of $G\times G$. The following lemma follows immediately from the fact that  $G\times G$ with $G$ acting diagonally and $G\times G$ with $G$ acting on the second coordinate are $G$-homeomorphic.

\begin{lemma}
The translation category $\ul{G}$ is $G$-isomorphic to the category  $\tG$.
\end{lemma}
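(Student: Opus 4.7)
The plan is to exhibit an explicit $G$-equivariant isomorphism of categories $F \colon \ul{G} \to \tG$.

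First, I would observe that both $\ul{G}$ and $\tG$ are indiscrete groupoids (a unique morphism between any two objects) with object set $G$, and both carry the translation $G$-action on their object sets. In this situation, the $G$-action on the morphism space is completely determined by the $G$-action on the object set: any functor $(g \cdot) \colon \sC \to \sC$ extending a given bijection $g \colon G \to G$ on objects is forced on morphisms, since there is only one possible value on each morphism (namely, the unique morphism between the translated source and target), and the functoriality conditions are automatic in an indiscrete category. Consequently, both $\ul{G}$ and $\tG$ are two presentations of the same canonical $G$-groupoid, and in particular they must be $G$-isomorphic.

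Concretely, the $G$-isomorphism $F \colon \ul{G} \to \tG$ is the identity on objects, and on morphisms it sends the unique morphism $h \to gh$ in $\ul{G}$, labeled $(g, h)$, to the unique morphism $h \to gh$ in $\tG$, which by the $\tG$-labeling convention is $(gh, h)$. On morphism spaces, this is the bijection $G \times G \to G \times G$ given by $(g, h) \mapsto (gh, h)$, with inverse $(a, b) \mapsto (a b^{-1}, b)$. This is precisely the standard $G$-homeomorphism between $G \times G$ with the action on the second factor (the morphism space of $\ul{G}$) and $G \times G$ with the diagonal action (the morphism space of $\tG$) hinted at in the preceding paragraph.

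The verification that $F$ is a functor is automatic (source, target, identities and composition are all uniquely determined in an indiscrete category), and $G$-equivariance reduces to the equivariance of the bijection on $G \times G$ just described, which is a short direct computation. No serious obstacle arises; the proof is essentially bookkeeping, matching the two labeling conventions for morphisms and checking the induced formula for the $G$-action on each morphism space agrees under $F$.
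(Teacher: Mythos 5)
Your main argument is the right one and essentially the paper's: both $\ul{G}$ and $\tG$ are indiscrete groupoids on the same object set $G$, so any $G$-equivariant bijection of the object sets forces a $G$-isomorphism of categories, and the paper likewise reduces the lemma to a $G$-homeomorphism of morphism spaces. Your explicit formula for the functor ($F=\id$ on objects, $(g,h)\mapsto(gh,h)$ on morphisms) is correct on the nose.

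However, the final sentence of your verification contains a genuine slip. The map $(g,h)\mapsto(gh,h)$ is \emph{not} $G$-equivariant from $G\times G$ with $G$ acting on the second factor to $G\times G$ with the diagonal action: one gets $k\cdot(g,h)=(g,kh)\mapsto(gkh,kh)$, while $k\cdot(gh,h)=(kgh,kh)$, and these agree only when $G$ is abelian. (The equivariant bijection between those two particular actions is $(g,h)\mapsto(hg,h)$, which is not your functor.) The resolution is that with left translation on the objects of $\ul{G}$, so that the identity on objects is $G$-equivariant, the induced $G$-action on $\Mor\ul{G}$ with the labeling $(g,h)\colon h\to gh$ is $k\cdot(g,h)=(kgk^{-1},kh)$, i.e.\ conjugation on the first coordinate and translation on the second --- not the pure second-coordinate action. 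Against \emph{that} action your bijection $(g,h)\mapsto(gh,h)$ is indeed diagonally equivariant, so your functor is correct; only your description of the source action is off. If instead one insists, as the paper's wording literally suggests, that the morphism action on $\ul{G}$ is purely on the second coordinate, then the object action must be right translation, and the $G$-isomorphism $\ul{G}\to\tG$ is no longer the identity on objects but inversion $h\mapsto h^{-1}$ (with $(g,h)\mapsto(h^{-1}g^{-1},h^{-1})$ on morphisms). Either reading gives the lemma, but you should not simultaneously assert the identity-on-objects functor and the ``second-coordinate'' morphism action; carrying out the ``short direct computation'' you defer would have revealed the mismatch.
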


\begin{rmk}\mylabel{chaotic}
The category $\tG$ is an instance of the more general concept of \emph{chaotic category} corresponding to a space. There is a \emph{chaotic category functor} from spaces to categories (actually, to groupoids), sending a space $X$ to the category $\widetilde{X}$ with object space $X$ and morphism space $X\times X$; there is a unique morphism between any two objects in $\widetilde{X}$. The relevant point is that the object functor is right adjoint to the chaotic category functor, and in particular, we have a homeomorphism between the mapping spaces \begin{equation}\label{adjunction}
\cat (\sC,\widetilde{X})\cong \Map(\sO\!b\sC,X).\end{equation}

Similarly, the translation category $\ul{G}$ of $G$ is an instance of the more general notion of translation category of a $G$-space. For a $G$-set, or a $G$-space $X$, we denote by $\underline{X}$ the translation category of $X$ with objects the points of $X$ and morphisms $(g, x): x\rightarrow gx$. However, as we have seen, the concepts of chaotic and translation category agree for $G$ up to $G$-isomorphism. Thus, it is harmless to think of $\tG$ as the translation category of $G$. For a more comprehensive treatment of both chaotic and translation categories, we refer the reader to \cite{GMM}.
\end{rmk} 

We make the following crucial observation.

\begin{observation}\mylabel{EG}
The classifying space  $B\tG$ is $G$-equivalent to the universal principal $G$-bundle $EG$ since $\tG$ is a contractible category (every object is initial and terminal) and it has a free $G$-action.
\end{observation}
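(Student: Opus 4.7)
The plan is to verify that $B\tG$ satisfies the two characterizing properties of $EG$: it is nonequivariantly contractible, and it carries a free $G$-action which makes it (the realization of) a free $G$-CW complex. Once both properties are established, the standard universal property of $EG$ — any two free $G$-CW complexes that are nonequivariantly contractible are $G$-homotopy equivalent — yields the desired $G$-equivalence.

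First I would handle contractibility. Since there is a unique morphism between any two objects of $\tG$, any chosen object $g_0 \in G$ is initial (and terminal). An initial object in a small category $\sC$ gives a natural transformation from the identity functor $\id_\sC$ to the constant functor at $g_0$, and applying $B$ to such a natural transformation yields a homotopy between $\id_{B\sC}$ and the constant map at $g_0$. This shows $B\tG$ is contractible. Equivalently, one can observe that $\tG$ is the chaotic groupoid on the set $G$, whose nerve has $n$-simplices $G^{n+1}$ with the usual face and degeneracy maps, and this simplicial set is well known to have contractible geometric realization (it is the total space of a simplicial contraction onto any chosen vertex).

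Next I would verify freeness. Under the identification of the nerve with $N_n\tG = G^{n+1}$, the $G$-action is the diagonal translation action on $G^{n+1}$, because the action on morphisms is forced by the translation action on objects. Since left translation on $G$ is free, the diagonal action is free on each $N_n\tG$, and by standard arguments (freeness on nondegenerate simplices) the geometric realization $B\tG$ inherits the structure of a free $G$-CW complex.

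Combining these two points, $B\tG$ is a free $G$-CW complex whose underlying space is contractible, and hence is $G$-equivalent to $EG$ by the uniqueness (up to $G$-homotopy equivalence) of such a model. The main subtlety worth spelling out is the passage from the simplicial level to the $G$-CW structure on the geometric realization, i.e.\ confirming that a free simplicial $G$-set does realize to a free $G$-CW complex; all other steps are routine, and I expect no real obstacle.
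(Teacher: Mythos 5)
Your proposal is correct and follows the same route the paper intends (the paper states this as an observation with the reasoning compressed into the sentence itself): contractibility of $B\tG$ from the fact that every object of $\tG$ is initial/terminal, freeness of the induced $G$-action, and then the characterization of $EG$ as a nonequivariantly contractible space with free $G$-action. You fill in the implicit details — the natural transformation argument for contractibility, the identification $N_n\tG \cong G^{n+1}$ with diagonal translation action, and the passage to a free $G$-CW structure — all of which are correct and are exactly the steps the paper is taking as read. One small remark: since $N_\bullet\tG$ is precisely the two-sided bar construction giving Milnor's join model $G * G * \cdots$, one can even observe that $B\tG$ \emph{is} a standard model for $EG$ on the nose rather than merely $G$-equivalent to one, which bypasses the uniqueness argument; but invoking uniqueness as you do is equally valid.
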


We have a functor $\cat(\tG, -)$ from $G$-categories to $G$-categories, which sends a $G$-category $\sC$ to the category of functors and natural transformations $\cat(\tG, \C)$, with $G$-action by conjugation, as described in section \ref{2cat}. This is a topological category when $\sC$ is such.  In view of Section \ref{2cat}, $\cat(\tG,-)$ can be viewed as a 2-functor. Observe that the  functor 
$\cat(\tG,-)$  is corepresented and is thus a right 
adjoint. Therefore it preserves all limits; in particular it preserves products, which will be crucial to our applications.

The equivariant projection $\tG\rtarr \ast$ to 
the trivial $G$-category induces a natural $G$-map 
\begin{equation}\label{iota}
\io\colon  \sA \simeq \cat(\ast,\sA) \rtarr \cat(\tG,\sA),
\end{equation}
which is always a nonequivariant equivalence of $G$-categories, but not usually a $G$-equivalence.
However, as observed in \cite{GM3},
the functor $\cat(\tG,-)$ is idempotent:

\begin{lem}\mylabel{idem} For any $G$-category $\sA$, 
\[ \io\colon \cat(\tG,\sA) \rtarr  
\cat(\tG,\cat(\tG,\sA))  \]
is an equivalence of $G$-categories.
\end{lem}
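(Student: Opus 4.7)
The plan is to construct an explicit $G$-equivariant inverse to $\io$ up to equivariant natural isomorphism, using the diagonal of $\tG$. By the internal-hom/product adjunction in the $2$-category $G\cat$ (see Section \ref{2cat}), there is a $G$-isomorphism $\cat(\tG, \cat(\tG, \sA)) \cong \cat(\tG \times \tG, \sA)$, and under this identification $\io$ corresponds to pullback along the projection $\pi_2\colon \tG \times \tG \rtarr \tG$: since $\io(F)$ is the constant functor at $F$, it represents the bifunctor $(g,h)\mapsto F(h)$.

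Because the diagonal $\Delta\colon \tG \rtarr \tG \times \tG$ is $G$-equivariant (the two factors carry the same $G$-action, so the diagonal action on the product restricts to the action on $\tG$), pullback along $\Delta$ gives a $G$-equivariant functor $\delta^*\colon \cat(\tG, \cat(\tG, \sA)) \rtarr \cat(\tG, \sA)$ sending $\Phi$ to the functor $g \mapsto \Phi(g)(g)$. Since $\pi_2\circ\Delta = \id_{\tG}$, the composite $\delta^*\circ\io$ is the identity on the nose. For the other composite, I would construct a natural isomorphism $\eta\colon \io\circ\delta^*\Rightarrow \id$ whose value at $\Phi$ is the natural transformation in $\cat(\tG, \cat(\tG,\sA))$ with component at $g\in\tG$ given by the natural transformation $\delta^*\Phi \Rightarrow \Phi(g)$ whose component at $h\in\tG$ is $\Phi(g,h)_h\colon \Phi(h)(h)\rtarr \Phi(g)(h)$, where $\Phi(g,h)\colon \Phi(h)\Rightarrow\Phi(g)$ is the image under $\Phi$ of the unique morphism $h\to g$ in $\tG$. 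Naturality in $h$, $g$, and $\Phi$ all reduce to bifunctoriality of $\Phi$ viewed as a functor on $\tG\times\tG$, and each $\Phi(g,h)_h$ is an isomorphism because every morphism in $\tG$ is.

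The main obstacle I anticipate is verifying that $\eta$ is $G$-equivariant, i.e., that $\eta_{k\Phi} = k\cdot\eta_\Phi$ for all $k\in G$. This amounts to carefully unpacking the conjugation action on functors and natural transformations recalled in Section \ref{2cat} and tracking how the formula $\Phi(g,h)_h$ transforms under the diagonal $G$-action. The key computation is that the structure morphisms of $\tG$ satisfy $k\cdot(g,h) = (kg,kh)$, so $(k\Phi)(g,h)_h = k\cdot\Phi(k^{-1}g, k^{-1}h)_{k^{-1}h}$, which is precisely the $h$-component of $k\cdot \eta_\Phi$ at $g$; thus equivariance of $\eta$ is built into the equivariance of $\Delta$ and the $2$-categorical structure of $G\cat$.
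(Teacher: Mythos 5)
Your argument is essentially the paper's: identify $\cat(\tG,\cat(\tG,\sA))$ with $\cat(\tG\times\tG,\sA)$ and observe that $\io$ is pullback along a projection $\tG\times\tG\to\tG$, which is a $G$-equivalence with inverse the diagonal $\Delta$. The equivariance of $\eta$ that you flag as the main worry comes for free: since $\tG\times\tG$ is chaotic, the natural isomorphism $\Delta\circ\pi\cong\id_{\tG\times\tG}$ is the unique one and hence automatically $G$-equivariant, and $\eta$ is just its image under the $2$-functor $\cat(-,\sA)$.
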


By analogy with the definition of homotopy fixed points of $G$-spaces, we make the following definition.

\begin{definition}
The \emph{homotopy fixed points} of a $G$-category $\C$, denoted by $\C^{hG},$  are defined as $\cat(\tG, \C)^G$, namely the $G$-equivariant functors $\tG\ra \C$ and the $G$-natural transformations between these.
\end{definition}

\begin{observation}\mylabel{hfixedpoints}
Note that $\tH$ and $\tG$ are equivalent as $H$-categories since they are both $H$-free contractible categories. Therefore we can identify $$\cat(\tG, \C)^H = H\cat(\tG, \C)) \simeq H\cat(\tH, \C)=\cat(\tH, \C)^H.$$
Consequently, for any $H\subseteq G$, we can unambiguously define the $H$-homotopy fixed points of a $G$-category  $\C$ as either $\cat(\tG, \C)^H$ or by applying the definition above of homotopy fixed points to $\sC$, regarded as an $H$-category. And conveniently, for any statement that we wish to prove holds for $H$-fixed points $\cat(\tG, \sC)^H$ for any $H\subseteq G$, it is enough to prove it for $G$-fixed points $\cat(\tG, \sC)^G$ as long as $G$ is arbitrary.

\end{observation}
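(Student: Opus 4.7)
The plan is to establish the first assertion --- that $\tH$ and $\tG$ are equivalent as $H$-categories in the $2$-categorical sense of \S\ref{2cat} --- and then transport this equivalence along the $2$-functor $\cat(-,\C)$ and the passage to $H$-fixed points. The unambiguity statement then follows tautologically from the identification $\cat(\tG,\C)^H = H\cat(\tG,\C)$ noted at the end of \S\ref{2cat}.

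To produce the equivalence in $H\cat$, I would take as forward map the $H$-equivariant inclusion $\iota\colon \tH\hookrightarrow \tG$ determined on objects by $H\hookrightarrow G$ and extended uniquely to morphisms (both categories being chaotic in the sense of Remark \ref{chaotic}). For the weak inverse, choose a set of coset representatives $\{g_i\}$ for $H\backslash G$ and define $r\colon \tG\to \tH$ on objects by $h g_i \mapsto h$; this is well-defined by the uniqueness of the coset decomposition, and $H$-equivariant because $H$ acts by left translation on both $H$ and $G$. Again by chaoticity, $r$ is uniquely determined on morphisms. The composite $r\iota$ equals $\id_{\tH}$ on the nose, while $\iota r$ differs from $\id_{\tG}$ on objects --- but since $\tG$ is chaotic there is a unique natural isomorphism $\iota r \Rightarrow \id_{\tG}$, and the uniqueness of this $2$-cell forces each of its components to be $H$-equivariant. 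Hence $\iota$ is an equivalence in the $2$-category $H\cat$.

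Applying the $2$-functor $\cat(-,\C)$ (contravariant in the first slot), with $\C$ regarded as an $H$-category by restriction along $H\subseteq G$, yields an $H$-equivalence $\cat(\tG,\C)\simeq \cat(\tH,\C)$. Such an equivalence descends to an equivalence of $H$-fixed point categories because the associated unit and counit isomorphisms are themselves $H$-equivariant natural transformations and therefore restrict to the $H$-fixed subcategories; this gives the displayed equivalence $\cat(\tG,\C)^H \simeq \cat(\tH,\C)^H$.

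I expect the only genuinely nontrivial step to be the construction of the $H$-equivariant retraction $r$, which rests on $G$ being a free left $H$-set and hence admitting a choice of coset representatives. Everything else --- the equivalence of chaotic categories, the equivariance of the canonical $2$-cells, and the passage to fixed points --- is $2$-categorically formal, so the observation is essentially bookkeeping once the free action is used.
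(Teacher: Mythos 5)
Your proof is correct and fills in exactly the details the paper elides with the one-line justification ``they are both $H$-free contractible categories.'' The explicit construction of the $H$-equivariant retraction $r\colon\tG\to\tH$ via a choice of coset representatives for $H\backslash G$, together with the observation that the unique $2$-cell $\iota r\Rightarrow\id_{\tG}$ in a chaotic category is automatically $H$-equivariant (since conjugating it by any $k\in H$ yields another natural isomorphism between the same pair of $H$-equivariant functors, hence the same $2$-cell by uniqueness), is precisely what makes the paper's appeal to freeness and contractibility work; the remaining steps --- applying the $2$-functor $\cat(-,\C)$ and restricting to $H$-fixed points --- are the same formal bookkeeping the paper has in mind.

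One small point worth flagging: the claim $r\iota=\id_{\tH}$ \emph{on the nose} requires that the identity $e$ be chosen as the representative of the trivial coset $H$; with an arbitrary choice one gets $r\iota\cong\id_{\tH}$ only up to the unique (and automatically $H$-equivariant) natural isomorphism, which of course suffices. This does not affect the validity of the argument.
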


 \subsection{Explicit description of homopy fixed point categories}\label{fixed points}

We describe explicitly the category of equivariant functors and equivariant
 natural transformations $G\cat(\tG, \C)$.  Any $G$-fixed functor $F:\tG \rightarrow \C$ is determined on objects by where the identity $e$ of $G$ gets mapped to since $F(g)=g\cdot F(e)$. On morphisms, $F$ is determined by where it sends morphisms of the type $(g,e)$ since $F(g,h)=h\cdot F(h^{-1}g, e)$. We have that $F(e,e)=id_C$, where $id_C$ is the identity morphism of the object $C\in \C$ and $F(e)=C$. The following cocycle condition is also satisfied: $$F(gh, e)=F(gh,g)F(g,e)=g\cdot F(h,e) F(g,e).$$

We summarize this discussion in the following result, which gives an explicit description of the homotopy fixed point category of a $G$-category $\C$.\footnote{This explicit description is also given in  more concise terms in \cite{homotopylimit}.}

\begin{prop}\mylabel{wrong homotopy fixed points}
 The objects of the homotopy fixed point category $\C^{hG}=G\cat(\tG, \C)$ are pairs $(C,f)$ where $C$ is an object of $\C$ and $f:G \rightarrow \Mor(\C)$ is a map from $G$ to morphisms of $\C$ such that $f(g):C \rightarrow g\cdot C$ and $f$ satisfies the condition  $f(e)=id_C$ and the cocycle  condition
\begin{equation}\mylabel{wrong cocycle}
 f(gh)=(g\cdot f(h)) f(g).
\end{equation}

A morphism $(C,f)\rightarrow (C', f')$ is given by a morphism  $\alpha: C\rightarrow  C'$ in $\C$ such that the following diagram commutes for any $g\in G$ :
$$ \xymatrix{
C \ar[d]_\alpha \ar[rr]^{f(g)} && g\cdot C \ar[d]^{g\cdot \alpha}\\
C' \ar[rr]^{f'(g)} && g\cdot C'} $$
\end{prop}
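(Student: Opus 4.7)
The plan is to unpack the definition of $G\cat(\tG,\C) = \cat(\tG,\C)^G$ directly, using the fact that $\tG$ is the chaotic $G$-groupoid with object set $G$ so that a functor $F\colon \tG\to\C$ is determined by its value on the object $e$ and its values on a spanning set of morphisms.

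First I would extract the object description. A $G$-equivariant functor $F\colon\tG\to\C$ satisfies $F(g) = g\cdot F(e)$ on objects, so it is determined on objects by $C := F(e)$. Every morphism of $\tG$ has the form $(h,g)\colon g\to h$, and by equivariance $(h,g) = g(g^{-1}h,e)$, so $F(h,g) = g\cdot F(g^{-1}h,e)$. Thus $F$ is determined on morphisms by the function $f\colon G\to\mor(\C)$ defined by $f(g) := F(g,e)\colon C\to g\cdot C$. The identity axiom $F(e,e)=\id_C$ gives $f(e)=\id_C$, and applying $F$ to the composite $(gh,g)\circ(g,e) = (gh,e)$ together with equivariance $F(gh,g) = g\cdot F(h,e) = g\cdot f(h)$ yields the cocycle condition $f(gh) = (g\cdot f(h))\circ f(g)$. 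Conversely, given any $(C,f)$ satisfying these two conditions, I would check that the formulas $F(g) := g\cdot C$ and $F(h,g) := g\cdot f(g^{-1}h)$ define a $G$-equivariant functor: functoriality on composable morphisms $(k,h)\circ(h,g) = (k,g)$ reduces, after factoring out $g$, exactly to the cocycle condition applied to the pair $(g^{-1}h, h^{-1}k)$, and equivariance is built into the formula.

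Next I would handle morphisms. A $G$-equivariant natural transformation $\et\colon F\Rightarrow F'$ has components $\et_g\colon g\cdot C\to g\cdot C'$ satisfying $\et_g = g\cdot \et_e$ by equivariance, so it is determined by $\al := \et_e\colon C\to C'$. Naturality of $\et$ applied to the morphism $(g,e)\colon e\to g$ produces the square
\[
\xymatrix{
C \ar[d]_{\al} \ar[rr]^{f(g)} && g\cdot C \ar[d]^{g\cdot\al}\\
C' \ar[rr]^{f'(g)} && g\cdot C'
}
\]
Conversely, given $\al\colon C\to C'$ making this square commute for every $g\in G$, setting $\et_g := g\cdot\al$ defines a $G$-equivariant natural transformation, because naturality on a general morphism $(h,g)$ reduces via the decomposition $(h,g) = g\cdot(g^{-1}h,e)$ and $G$-equivariance of $\al$'s square to the square at $g^{-1}h$, which holds by assumption.

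The only real obstacle is keeping the conventions straight: the labeling $(h,g)$ for the morphism $g\to h$, and the resulting formulas $g\cdot(h,k) = (gh,gk)$ and $(a,b)\circ(b,c) = (a,c)$. Once these are fixed, the proof is a direct translation of the functoriality and naturality axioms under the parametrization $F\leftrightarrow(C,f)$ and $\et\leftrightarrow\al$, and I would present it as two short verifications (objects and morphisms) together with the observation that these assignments are mutually inverse bijections compatible with composition of natural transformations.
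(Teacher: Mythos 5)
Your proposal is correct and follows essentially the same route as the paper: the paper's preceding discussion extracts the data $C=F(e)$, $f(g)=F(g,e)$ from a $G$-fixed functor $F$ and reads off the unit and cocycle conditions from $F(e,e)=\id_C$ and the composite $(gh,g)\circ(g,e)=(gh,e)$, exactly as you do. You spell out the converse verification and the morphism-level check slightly more explicitly than the paper, but the decomposition and the conventions ($(h,g)\colon g\to h$, $g\cdot(h,k)=(gh,gk)$) are the same.
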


However, the alternative cocycle condition \begin{equation}\mylabel{cocycle} f(gh)=f(g)(g\cdot f(h)) \end{equation} is the standard one, which will appear in all of our applications. For instance, this is the condition that yields a crossed homomorphism when $\sC$ is a group, whereas condition (\ref{wrong cocycle}) yields a crossed antihomomorphism, which is less customary.

We show that changing condition (\ref{wrong cocycle}) to the usual cocycle condition  (\ref{cocycle}) is inoffensive since it yields an isomorphic category. The proof is a straightforward generalization of the proof of Lemma 4.13 in \cite{GMM}, which is the special case where the category $\C$ is a group. 
\begin{prop}\mylabel{homotopy fixed points}
 
 There is an isomorphism of categories between the homotopy fixed point category $\C^{hG}=G\cat(\tG, \C)$ and the category described as follows.
 The objects  are pairs $(C,f)$ where $C$ is an object of $\C$ and $f:G \rightarrow \Mor(\C)$ is a map from $G$ to morphisms of $\C$ such that $f(g):g\cdot C \rightarrow C$ and $f$ satisfies the condition  $f(e)=id_C$ and the cocycle  condition
$$f(gh)=f(g)(g\cdot f(h)). $$

A morphism $(C,f)\rightarrow (C', f')$ is given by a morphism  $\alpha: C\rightarrow  C'$ in $\C$ such that  the following diagram commutes for any $g\in G$:
$$ \xymatrix{
g\cdot C \ar[d]_{g\cdot \alpha} \ar[rr]^{f(g)} &&  C \ar[d]^{\alpha}\\
g\cdot C' \ar[rr]^{f'(g)} &&  C'} $$
\end{prop}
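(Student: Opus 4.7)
The plan is to exhibit an explicit isomorphism of categories between the category $\sD$ described in the statement and $\C^{hG}=G\cat(\tG,\C)$ as described in Proposition \ref{wrong homotopy fixed points}. The key observation is that $\tG$ is a groupoid (every morphism is invertible, since $(g,h)^{-1}=(h,g)$), so any functor $F\colon\tG\to\C$ sends every morphism to an isomorphism in $\C$. In particular, for any object $(C,f)$ of $\C^{hG}$, the morphism $f(g)\colon C\to g\cdot C$ is automatically an isomorphism. This allows us to invert everywhere.

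Define a functor $\Phi\colon \C^{hG}\to \sD$ on objects by $(C,f)\mapsto (C,\bar f)$ where $\bar f(g) := f(g)^{-1}\colon g\cdot C\to C$, and as the identity on underlying morphisms $\alpha$. I would first check that $\Phi$ is well-defined on objects: clearly $\bar f(e)=\id_C$, and applying inverses to the cocycle condition (\ref{wrong cocycle}) gives
\[
\bar f(gh) = f(gh)^{-1} = \bigl((g\cdot f(h))\,f(g)\bigr)^{-1} = f(g)^{-1}\,(g\cdot f(h)^{-1}) = \bar f(g)\,\bigl(g\cdot \bar f(h)\bigr),
\]
which is the required cocycle condition in $\sD$. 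For morphisms, observe that the square
\[
\xymatrix{ C\ar[d]_\alpha\ar[r]^{f(g)} & g\cdot C\ar[d]^{g\cdot\alpha} \\ C'\ar[r]^{f'(g)} & g\cdot C'}
\]
commutes if and only if the square obtained by inverting the horizontal arrows commutes, i.e.\ if and only if $\alpha\circ \bar f(g) = \bar f'(g)\circ(g\cdot\alpha)$; this is precisely the morphism condition in $\sD$. So $\Phi$ sends morphisms to morphisms, and it is evidently functorial (composition and identities are preserved by construction).

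The inverse $\Psi\colon\sD\to \C^{hG}$ is defined symmetrically by $(C,\tilde f)\mapsto(C,\tilde f^{-1})$, using that $\tilde f(g)\colon g\cdot C\to C$ is likewise an isomorphism (its two-sided inverse is provided directly by the cocycle condition together with $\tilde f(e)=\id_C$, applied to $gh=e$). The same verification as above shows that $\Psi$ converts the $\sD$-cocycle and morphism conditions back into those of $\C^{hG}$. Since inverting an isomorphism twice yields the original morphism, $\Phi$ and $\Psi$ are mutually inverse, giving the desired isomorphism of categories.

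There is no serious obstacle here; the proof is purely formal bookkeeping of the cocycle conditions under inversion. The only point requiring mild care is the verification that $f(g)$ is automatically an isomorphism, so that the assignment $f\mapsto f^{-1}$ actually makes sense without any further hypothesis; this follows because $\tG$ is a groupoid, so the statement is really just two different conventions for labeling the same homotopy fixed point data.
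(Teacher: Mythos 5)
Your proof is correct and is essentially the same argument as the paper's: the paper constructs the isomorphism in the other direction via the assignment $f(g)\mapsto g\cdot f(g^{-1})$, which the paper itself notes is exactly $f(g)^{-1}$, so your ``invert the morphisms'' formulation and the paper's explicit formula are the same map. The only cosmetic difference is that you also invoke the groupoid structure of $\tG$ to see immediately that each $f(g)$ is an isomorphism, whereas the paper deduces this from the cocycle condition (as you also do for the reverse direction $\Psi$).
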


\begin{proof} We explicitly construct the isomorphism between the category described in \myref{homotopy fixed points} to the category described in \myref{wrong homotopy fixed points}. The construction of the inverse isomorphism is similar.

Let $f\colon G\rtarr \Mor(\sC)$ be such that $f(g):g\cdot C \rightarrow C$ and suppose that $f$ satisfies $f(e)=id_C$ and condition (\ref{cocycle}). Note that $f(g)$ is an isomorphism with inverse $g\cdot f(g^{-1})$ for all $g\in G$.

Define 
$\bar{f}\colon G\rtarr \Mor(\C)$ by 
$$\bar{f}(g) = g\cdot f(g^{-1}),$$ so that $\bar{f}\colon C\ra g\cdot  C$.  Then
\[ \bar{f}(gh) =  (gh)\cdot f(h^{-1}g^{-1}) 
= g\cdot h\cdot(f(h^{-1})(h^{-1}\cdot f(g^{-1})) 
= (g\cdot \bar{f}(h))(\bar{f}(g)),\]
so that $\bar{f}$ is satisfies condition  (\ref{wrong cocycle}). 

 Let $f,f'\colon G\rtarr \Mor(\sC)$, such that $f(g):g\cdot  C \rightarrow  C$ and $f(g):g\cdot C' \rightarrow C'$ Suppose that $f$ and $f'$ satisfy  (\ref{cocycle}), and $\al:C\ra C'$ is a morphism in $\sC$ for which the diagram 
$$ \xymatrix{
g\cdot C \ar[d]_{g\cdot \alpha} \ar[rr]^{f(g)} &&  C \ar[d]^{\alpha}\\
g\cdot C' \ar[rr]^{f'(g)} &&  C'} $$commutes for all $g\in G$. Then
\[ \bar{f'}(g)\al = (g\cdot f'(g^{-1}))\al
= g\cdot (f'(g^{-1})(g^{-1}\cdot\al)) = g\cdot (\al f(g^{-1})) 
= (g\cdot\al)\bar{f}(g),\]
i.e. the diagram
$$ \xymatrix{
C \ar[d]_\alpha \ar[rr]^{\bar{f}(g)} && g\cdot C \ar[d]^{g\cdot \alpha}\\
C' \ar[rr]^{\bar{f'}(g)} && g\cdot C'} $$
commutes  for all $g\in G$.
\end{proof}

\begin{remark}\mylabel{trivial action}
Note that if $G$ acts trivially on $\C$, then $\cat(\tG,\C)^G\cong \cat(G, \C)$, the category of functors $G\rightarrow \C$,  i.e., $\C^{hG}$ is the category of representations of $G$ in $\C$.
 \end{remark}

We emphasize that the homotopy fixed points do not in general commute with the classifying space functor. However, if  $\C$ is a \emph{discrete} $G$-groupoid, then the comparison map $B\cat(\tG, \sC)\ra \Map(EG, B\C)$ is a weak $G$-equivalence, and we have $$B(\sC^{hH})\simeq (B\C)^{hH}$$ for any $H\subseteq G$ (see \cite[\S 5]{GMM} for a proof). However, in some of our examples of interest, even when the category $\C$ is a groupoid, we need it to be topological, and therefore cannot assume this commutation.

\subsection{Homotopy fixed points of a group}\label{group case}
 The homotopy fixed point category $ \C^{hG}$ simplifies when $\C= \Pi$, a topological group regarded as a topological category with one object, with $G$-action.  In that case, the homotopy fixed points can be interpreted in terms of the well-known notion of \emph{crossed group homomorphisms.} The category $G\cat(\tG, \Pi)$ has been studied extensively in \cite{GMM}, where it was shown that it  has the following interpretation.
 
 \begin{thm}[\cite{GMM}, 4.15]\mylabel{crossedhom}
 
Suppose $\Pi$ is a group with $G$-action. The homotopy fixed point category $\Pi^{hG} $ is equivalent to the \emph{crossed functor 
category} $\cat_\times (G,\PI )$ whose objects are the continuous crossed 
homomorphisms $G\rtarr \PI$ and whose morphisms $\si\colon \al\rtarr \be$ are the elements 
$\si\in \PI$ such that $$ \be(g)(g\cdot\si) = \si\al(g).$$ 
\end{thm}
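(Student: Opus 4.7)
The plan is to deduce this essentially by unwinding the explicit description of the homotopy fixed point category given in Proposition \myref{homotopy fixed points}, applied to the special case where $\sC = \Pi$ is the one-object (topological) category associated to the group $\PI$. Since $\PI$ has a unique object $\ast$, every object $(C, f)$ of $\Pi^{hG}$ automatically has $C = \ast$, so the data reduces to a continuous map $f \colon G \to \mor(\Pi) = \PI$. The identity morphism of $\ast$ is the identity element $e \in \PI$, and composition of morphisms in $\Pi$ is group multiplication in $\PI$, so the conditions $f(e) = \id_C$ and $f(gh) = f(g)(g \cdot f(h))$ translate verbatim into the definition of a continuous crossed homomorphism $G \to \PI$.

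For morphisms I would similarly unpack the diagram in Proposition \myref{homotopy fixed points}: a morphism $(\ast, \al) \to (\ast, \be)$ is an element $\si \in \PI$ such that the square with top edge $\al(g)$, bottom edge $\be(g)$, and vertical edges $g \cdot \si$ and $\si$ commutes for all $g \in G$. Composing along the square and using that $\PI$ has a single object so that all compositions are group products, this commutativity reads $\be(g)(g \cdot \si) = \si\, \al(g)$, which is exactly the condition defining morphisms in $\cat_\times(G, \PI)$. Identities and composition on both sides correspond to $\si = e$ and multiplication in $\PI$, so the assignment is functorial.

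Finally I would observe that the construction on objects and morphisms defines a functor $\Pi^{hG} \to \cat_\times(G,\PI)$ which is bijective on both objects and on hom-sets (in fact it is the identity once one identifies $\mor(\Pi)$ with $\PI$), and is continuous, hence an isomorphism -- in particular an equivalence -- of (topological) categories. Since Proposition \myref{homotopy fixed points} already did the work of reconciling the ``backward'' cocycle condition coming directly from $G\cat(\tG, -)$ with the standard crossed-homomorphism condition \eqref{cocycle}, the only real content of the argument is the bookkeeping of objects and morphisms; the main potential obstacle is simply keeping track of the direction of $f(g)$ and the orientation of the square, which is why I would apply the version of the homotopy fixed point category from Proposition \myref{homotopy fixed points} rather than \myref{wrong homotopy fixed points}.
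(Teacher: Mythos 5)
Your proof is correct. The paper itself gives no argument here --- it simply cites \cite{GMM}, Lemma 4.15 --- and your derivation by specializing Proposition~\myref{homotopy fixed points} to the one-object category $\Pi$ is exactly the natural way to obtain it from the paper's own machinery; in particular, you correctly use the ``standard'' cocycle condition from \myref{homotopy fixed points} (rather than the backwards one from \myref{wrong homotopy fixed points}), so the orientation of the morphism square comes out as $\be(g)(g\cdot\si)=\si\al(g)$ as required.
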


 \noindent This interpretation leads to the following condition for when the nonequivariant equivalence
$$\io\colon  \Pi \simeq \cat(\ast,\Pi) \rtarr \cat(\tG,\Pi)$$ is a weak $G$-equivalence.  
\begin{prop}[\cite{GMM}, 4.19]\mylabel{H1 trivial}
The functor $\io^H\colon \PI^H \rtarr \Ch \PI)^H$
is an equivalence of categories if and only if the first nonabelian cohomology set
$H^1(H;\PI)$ is trivial.
\end{prop}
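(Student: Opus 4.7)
The plan is to use the interpretation of the homotopy fixed point category via crossed homomorphisms from Theorem~\myref{crossedhom}, transporting $\io^H$ across that equivalence, and then verifying essential surjectivity and full faithfulness separately.

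First, I would identify the image of $\io^H$. The functor $\io$ is induced by the unique equivariant projection $\tH \rtarr \ast$, so the composite $\tH \rtarr \ast \rtarr \PI$ is constant on the unique object and sends every morphism of $\tH$ to $e\in\PI$. Under the equivalence $\Ch\PI)^H \simeq \cat_\times(H,\PI)$, this functor corresponds to the trivial crossed homomorphism $\epz\colon H\rtarr \PI$ with $\epz(g)=e$ for all $g$. In particular, every object of $\PI^H$ lands on $\epz$, so essential surjectivity of $\io^H$ is exactly the statement that every crossed homomorphism is isomorphic to $\epz$ in $\cat_\times(H,\PI)$.

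Next I would handle full faithfulness, which should come for free. The category $\PI^H$ has one object with automorphism group $\PI^H$. Its image under $\io^H$ consists of self-morphisms $\si\colon \epz\rtarr \epz$ in $\cat_\times(H,\PI)$, which by definition are the $\si\in\PI$ satisfying $\epz(g)(g\cdot\si)=\si\,\epz(g)$, i.e.\ $g\cdot\si = \si$ for all $g\in H$. Thus $\mathrm{Aut}(\epz) = \PI^H$, and one checks directly from the definition of $\io$ that the map $\PI^H \rtarr \mathrm{Aut}(\epz)$ is the identity, so $\io^H$ is always fully faithful.

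Finally I would match essential surjectivity to the cohomological condition. A morphism $\si\colon \epz\rtarr\al$ in $\cat_\times(H,\PI)$ is an element $\si\in\PI$ with $\al(g)(g\cdot \si) = \si\,\epz(g) = \si$, i.e.\ $\al(g) = \si\,(g\cdot\si)^{-1}$. This is precisely the condition that $\al$ is a principal crossed homomorphism, which by definition represents the trivial class in $H^1(H;\PI)$. Hence every object of $\cat_\times(H,\PI)$ is isomorphic to $\epz$ if and only if every crossed homomorphism is principal, if and only if $H^1(H;\PI)$ is the trivial pointed set. Combining with the previous paragraph gives the proposition. The only real obstacle is making sure the continuity/topology of $\si$ is handled correctly when $\PI$ is a topological group, but this follows from the definition of $H^1(H;\PI)$ in the continuous setting.
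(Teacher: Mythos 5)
Your proof is correct. The paper itself does not give an argument for this proposition; it simply cites it from \cite{GMM}, 4.19, so there is no in-text proof to compare against. Your reconstruction is the natural one, and in fact the paper introduces Theorem~\myref{crossedhom} precisely to make this argument available: you transport $\io^H$ across the equivalence $\cat(\tH,\PI)^H\simeq\cat_\times(H,\PI)$, observe that it hits only the trivial crossed homomorphism $\epz$, check it is always fully faithful since $\mathrm{Aut}(\epz)=\{\si\in\PI: g\cdot\si=\si\ \forall g\in H\}=\PI^H$ and $\io^H$ is the identity there, and then reduce essential surjectivity to the statement that every crossed homomorphism is principal, which is exactly triviality of $H^1(H;\PI)$ as a pointed set. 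One small point worth spelling out if you write this up more fully: the morphism condition $\be(g)(g\cdot\si)=\si\al(g)$ makes $\cat_\times(H,\PI)$ a groupoid, so "isomorphic to $\epz$" is unambiguous regardless of the direction in which you solve for $\al(g)$; with the cited convention $\si\colon\epz\to\al$ gives $\al(g)=\si(g\cdot\si)^{-1}$, matching the standard coboundary relation. The note about continuity at the end is appropriate, since $H^1$ here is computed with continuous crossed homomorphisms, as Theorem~\myref{crossedhom} already requires.
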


\subsection{Homotopy invariance of homotopy fixed points}
The following lemma is inspired by the analogous result for homotopy fixed points of $G$-spaces or naive $G$-spectra. 
 \begin{proposition}\mylabel{homotopy fixed point weak equiv}
 If $\Theta \colon \sC\to \sD$ is a $G$-functor that is a nonequivariant equivalence of categories then the functor induced by post composition
 $$\cat(\tG, \sC)\to \cat(\tG, \sD)$$ is a weak $G$-equivalence of categories.
 \end{proposition}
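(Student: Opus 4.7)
The plan is to reduce to showing that for every subgroup $H\subseteq G$, the induced functor on $H$-fixed point categories
$$\Theta_*^H \colon \cat(\tG,\sC)^H \longrightarrow \cat(\tG,\sD)^H$$
is an equivalence of categories. Since the classifying space functor commutes with fixed points (equation \ref{B fixed points}), this gives a weak equivalence on each $H$-fixed space of $B\cat(\tG,\sC)$, which is exactly what it means for $B\Theta_*$ to be a weak $G$-equivalence (Definition \myref{weak equiv}). By Observation \myref{hfixedpoints}, $\cat(\tG,\sC)^H$ coincides with the $H$-homotopy fixed points $\sC^{hH}$, and $\Theta$ restricts to an $H$-equivariant functor for every subgroup $H$. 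So the whole statement reduces to the single claim: if $\Theta\colon\sC\to\sD$ is $H$-equivariant and a nonequivariant equivalence of categories, then $\Theta_*\colon\sC^{hH}\to\sD^{hH}$ is an equivalence.

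To verify this, I would use the explicit description of $\sC^{hH}$ from Proposition \myref{wrong homotopy fixed points}: an object is a pair $(C,f_C)$ with $f_C(g)\colon C\to gC$ satisfying the cocycle condition, and $\Theta_*$ sends it to $(\Theta C, \Theta\circ f_C)$, which makes sense because $g\Theta C = \Theta(gC)$ by equivariance. For fully faithfulness, the compatibility condition on a morphism $\alpha\colon(C,f_C)\to(C',f_{C'})$ is an equation in $\sC$, and using $\Theta(g\alpha)=g\Theta\alpha$ together with faithfulness of $\Theta$, one checks that $\alpha$ satisfies the condition if and only if $\Theta\alpha$ does. So the $\Hom$-bijection induced by $\Theta$ restricts to a bijection on morphisms of homotopy fixed point categories.

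Essential surjectivity is where the subtlety lies. Given $(D,f_D)\in\sD^{hH}$, pick any $C\in\sC$ and an isomorphism $\phi\colon\Theta C\to D$ using essential surjectivity of $\Theta$, and define $f_C(g)\colon C\to gC$ as the unique morphism with
$$\Theta f_C(g) = (g\phi)^{-1}\, f_D(g)\, \phi,$$
which exists by full faithfulness. The cocycle condition for $f_C$ is an equation in $\sC$ which, after applying $\Theta$, follows from the cocycle condition for $f_D$ and the equivariance of $\Theta$; faithfulness then lifts the identity back to $\sC$. By construction $\phi$ is an isomorphism $(\Theta C,\Theta f_C)\to(D,f_D)$ in $\sD^{hH}$.

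The main obstacle is that although $\Theta$ has a quasi-inverse $\Psi$ as a nonequivariant functor, $\Psi$ need not be equivariant, so one cannot just apply $\cat(\tG,-)$ to $\Psi$ to get an inverse $G$-functor. The argument above sidesteps this by using $\Theta$ itself, together with a non-canonical and non-equivariant choice of isomorphism $\phi\colon\Theta C\to D$, to transport the cocycle data from $\sD$ back to $\sC$; full faithfulness of $\Theta$ is exactly what lets one define morphisms in $\sC$ from morphisms in $\sD$ even without an equivariant inverse on the nose.
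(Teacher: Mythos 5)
Your proof is correct and takes essentially the same approach as the paper: reduce to $G$-fixed points, use the explicit description of homotopy fixed points via cocycles, and leverage full faithfulness of $\Theta$ to lift the cocycle data from $\sD$ to $\sC$ for essential surjectivity (the paper chooses the isomorphism in the opposite direction, $\psi\colon D\to\Theta C$, but this is cosmetic). Your closing remark correctly identifies the key obstacle that the quasi-inverse of $\Theta$ need not be equivariant, which is exactly why one works with $\Theta$ directly rather than applying $\cat(\tG,-)$ to an inverse.
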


 \begin{proof}
 From \myref{EG}, we see that it is enough to prove that we get an equivalence on $G$-fixed points. The map $$ \cat(\tG, \sC)^G \to \cat(\tG, \sD)^G$$ is faithful since it is the restriction of a faithful map to a subcategory. We show that is essentially surjective and full. We use the explicit description of fixed points given above.\\
 
 Pick an object $(D, f)$ in $\cat(\tG, \sD)^G$. Since $\Theta$ is essentially surjective, there exists a nonequivariant isomorphism $\psi \colon D\xrightarrow{\cong} \Theta(C)$ for some $C\in \sC$. By applying $g\cdot$ we get  $g\psi\colon gD\xrightarrow{\cong} g\Theta(C)=\Theta(gC)$. Since $\Theta$ is fully faithful, for every $f(g)\colon D\xrightarrow{\cong} gD$ there exists a unique map $f'(g)\colon C \xrightarrow{\cong} gC$ such that $\Theta(f'(g))$ is the composite
 \[ \Theta(C)\xrightarrow{\psi^{-1}} D\xrightarrow{f(g)}gD\xrightarrow{g\psi} g\Theta(C),\] and $f'(g)$ is an isomorphism since $f(g)$ and $\psi$ are. We need to check the cocyle condition on $f'$.  
 
 We will read it off from the following commutative diagram 
 \[ \xymatrix{
 D \ar[d]_-\psi \ar[rr]^-{f(g)} && gD \ar[d]^-{g\psi} \ar[rr]^-{gf(h)} && gh D\ar[d]^-{gh\psi}\\
 \Theta(C) \ar[rr]^-{\Theta(f'(g))} && g\Theta(C) \ar[rr]^-{g\Theta(f'(h))} && gh \Theta(C)
 }\]
  The top composite is $f(gh)$ since $f$ satisfies the cocycle condition. Thus the bottom map must be $\Theta(f(gh))$. By the commutation of $g$ with $\Theta$, the bottom map is just $\Theta$ applied to the composite $$C\xrightarrow{f'(g)} gC \xrightarrow{gf'(h)} ghC.$$ 
Thus $f'$ satisfies the cocycle condition. 

We are left to show fullness. Suppose we have a morphism in $\cat(\tG, \sD)$ from $(\Theta(C), \Theta(f))$ to $(\Theta(C'), \Theta(f'))$ given by the diagrams
 \[ \xymatrix{
 \Theta(C)\ar[d]_-{\al} \ar[rr]^-{\Theta(f(g))} && g\Theta(C) \ar[d]^-{g\al} \\
 \Theta(C') \ar[rr]^-{\Theta(f'(g))} && g\Theta(C') 
 }\]
Since $\Theta$ is full, there exists a map $C\xrightarrow{\al'} C'$ such that $\Theta(\al')=\al$. Thus there is a map in $\cat(\tG, \sC)$ 
 \[ \xymatrix{
C\ar[d]_-{\al'} \ar[rr]^-{f(g)} && gC \ar[d]^-{g\al'} \\
 C' \ar[rr]^-{f'(g)} && gC' 
 }\]
 whose image is the map above. This gives fullness.
  \end{proof}

\section{Pseudo equivariance}\label{pseudo}
 
 \subsection{Pseudo equivariant functors}\label{pseudo1}

Let $\sC$ and $\sD$ be $G$-categories. We define the notion of a $\emph{pseudo equivariant}$ functor  $\Theta\colon \sC\ra \sD$, and  we then show that such a functor induces an on the nose equivariant functor $$\cat(\tG, \sC) \to \cat(\tG, \sD).$$
Thus it induces maps on fixed points $$\cat(\tG, \sC)^H \to \cat(\tG, \sD)^H$$ for all subgroups $H\subseteq G$. Pseudo equivariance will be absolutely central to a lot of our $K$-theory results because often enough the maps between the $G$-categories we will consider in equivariant algebraic $K$-theory are not on the nose equivariant, but pseudo equivariant. The construction of equivariant algebraic $K$-theory will ensure that this is enough to get actual equivariant maps on the spectrum level. Moreover, this result will be what allows us to rectify a $G$-action on a symmetric monoidal or Waldhausen category which does not preserve the structure strictly, but only up to coherent isomorphism.

\begin{definition}\mylabel{weakequivariant}
A $\emph{pseudo equivariant}$ functor between $G$-categories $\sC$ and $\sD$ is a functor $\Theta \colon \sC \ra \sD$, together with
 natural isomorphisms of functors  $\theta_g$ for all $g\in G$
 
 \[
\xymatrix{
\ \C\  \ar[r]^-{g\cdot} \ar[d]_{\Theta} \drtwocell<\omit>{<0>\quad   \theta_g}  & \ \sC\  \ar[d]^{\Theta}  \\ 
\ \sD\ \ar[r]_-{g\cdot} & \ \sD\ . \\}
\] 
 such that  $\theta_e=\id$ and  for $g,h \in G$ we have an equality of natural transformations, where on the left hand side we are considering the composite of natural transformations.
\[
\xymatrix{
\ \C\  \ar[r]^-{h\cdot} \ar[d]_{\Theta} \drtwocell<\omit>{<0>\quad   \theta_h}  & \ \sC\  \ar[d]^{\Theta} \  \ar[r]^-{g\cdot}  \drtwocell<\omit>{<0>\quad   \theta_g}  & \ \sC\  \ar[d]^{\Theta}  \\ 
\ \sD\ \ar[r]_-{h\cdot} & \ \sD\ \ar[r]_-{g\cdot} & \ \sD\ } 
\qquad = \qquad
\xymatrix{
\ \C\  \ar[r]^-{gh\cdot} \ar[d]_{\Theta} \drtwocell<\omit>{<0>\quad   \theta_{gh}}  & \ \sC\  \ar[d]^{\Theta}  \\ 
\ \sD\ \ar[r]_-{gh\cdot} & \ \sD\  }
\]
Note that requiring this equality makes sense because the outer right down and down right composites in the two diagrams are equal. Explicitly, for $C$ an object of $\C$, this means that the following diagram commutes:

\[\xymatrix{
\Theta(ghC) \ar[rr]^{\theta_g (hC)} \ar@/^3pc/[rrrr]^{\theta_{gh}(C) \ \ }  & & g\Theta(hC) \ar[rr]^{g \theta_h(C)}  & & gh \Theta(C)  
} \]

\end{definition}

\begin{remark}\mylabel{afterthetadiagram}
If $\theta_g$ are equalities for all $g\in G$, then $\Theta$ is actually an equivariant functor.
\end{remark}

We explain the choice of nomenclature. Recall that a $G$-category is defined by a functor \newline $G\ra \cat$, and an equivariant map between $G$-categories is then just a natural transformations of such functors. A \emph{pseudo equivariant} map between $G$-categories is a \emph{pseudo natural transformation}. 
We prove next that a pseudo equivariant functor $\Theta \colon \sC \ra \sD$ naturally induces an on the nose equivariant map after applying the $\cat(\tG, -)$ functor. 

\begin{proposition}\mylabel{pseudo equiv}
A pseudo equivariant functor $\Theta \colon \sC \ra \sD$ naturally induces an equivariant functor $$\tilde{\Theta}\colon \cat(\tG, \sC)\to \cat(\tG, \sD).$$
\end{proposition}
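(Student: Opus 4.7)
The plan is to construct $\tilde\Theta$ by twisting the obvious post-composition $\Theta \circ F$ so that the failure of $\Theta$ to be strictly equivariant is absorbed into the definition itself. For a functor $F\colon \tG \ra \sC$, define $\tilde\Theta(F)\colon \tG \ra \sD$ on objects by
\[ \tilde\Theta(F)(g) := g\cdot\Theta(g^{-1}\cdot F(g)). \]
The symmetric occurrence of $g$ on the outside and $g^{-1}$ on the inside makes strict equivariance on objects automatic: both $\tilde\Theta(hF)(g)$ and $(h\cdot\tilde\Theta(F))(g)$ unwind to $g\cdot\Theta(g^{-1}h\cdot F(h^{-1}g))$, which is the entire content of equivariance at the object level.

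To define $\tilde\Theta(F)$ on morphisms I would first package the coherence data into natural isomorphisms $\lambda_g \colon g\cdot \Theta(g^{-1}\cdot(-)) \Rightarrow \Theta$, obtained by whiskering $\theta_{g^{-1}}$ with $g\cdot$ so that $\lambda_g(X) = g\cdot \theta_{g^{-1}}(X)$. For a morphism $(h,g)\colon g \ra h$ in $\tG$, set
\[ \tilde\Theta(F)(h,g) := (\lambda_h^{-1}\lambda_g)_{F(h)} \circ g\cdot\Theta(g^{-1}\cdot F(h,g)), \]
and on natural transformations $\eta\colon F \Rightarrow F'$ define $\tilde\Theta(\eta)_g := g\cdot\Theta(g^{-1}\cdot \eta_g)$. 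That $\tilde\Theta(F)$ is a functor reduces to naturality of each $\lambda_g$ combined with the telescoping identity $(\lambda_k^{-1}\lambda_h)\circ(\lambda_h^{-1}\lambda_g) = \lambda_k^{-1}\lambda_g$; that $\tilde\Theta$ is a functor $\cat(\tG,\sC) \ra \cat(\tG,\sD)$ is then a routine compatibility check on natural transformations.

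The bulk of the argument, and the only place where the coherence axiom on $\{\theta_g\}$ is essentially used, is verifying strict equivariance of $\tilde\Theta$ on morphisms. Given equivariance at the object level, this reduces to proving that for every object $X$ of $\sD$,
\[ h\cdot\lambda_{h^{-1}g}(X) = \theta_h(X) \circ \lambda_g(h\cdot X), \]
which upon expanding the $\lambda$'s becomes
\[ g\cdot \theta_{g^{-1}h}(X) = \theta_h(X) \circ g\cdot \theta_{g^{-1}}(h\cdot X). \]
This is exactly the cocycle axiom of Definition~\myref{weakequivariant} applied to the pair $(g^{-1},h)$ and whiskered on the left by $g\cdot$. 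The main obstacle is therefore essentially bookkeeping: the twist in the definition is rigged so that the strict equivariance demanded of $\tilde\Theta$ falls out of the pseudo-naturality data already present on $\Theta$. Naturality of the assignment $\Theta \mapsto \tilde\Theta$ in pseudo equivariant functors is then immediate from the explicit formula.
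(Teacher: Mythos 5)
Your proof is correct and follows essentially the same route as the paper: the object-level formula $\tilde\Theta(F)(g) = g\cdot\Theta(g^{-1}F(g))$, the formula $\tilde\Theta(\eta)_g = g\Theta(g^{-1}\eta_g)$ on natural transformations, and the verification that equivariance on morphisms boils down to the coherence (cocycle) axiom on $\{\theta_g\}$ are all the same, with your morphism formula differing from the paper's only by an application of naturality of $\lambda_g$. Packaging the coherence data into the natural isomorphisms $\lambda_g = g\cdot\theta_{g^{-1}}$ is a clean reorganization of the bookkeeping rather than a new idea (note: in your displayed coherence identity the object $X$ should be taken in $\sC$, the source of $\Theta$, not $\sD$).
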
 

\begin{proof}
Clearly post composing a functor $F\colon \tG\to \sC$ with $\Theta$ does not yield an equivariant functor, but we can use the natural isomorphisms $\theta_g$ to create one. We define
$$\tilde{\Theta}(F)(g)=g\Theta ((g^{-1}F)(e))=g \Theta(g^{-1} F(g))$$

Recall that there is a unique map in $\tG$ from $g$ to $g'$, which we denote by $(g',g)$. Applying $\Theta\circ F$ we get a map  $\Theta(F(g))\xrightarrow{\Theta(F(g', g))} \Theta(F(g'))$ in $\sD$.  We define $\tilde{\Theta}(g',g)$ to be the composite
$$ g \Theta(g^{-1} F(g)) \xrightarrow[\cong]{\theta_g^{-1}}  \Theta(gg^{-1} F(g))  \xrightarrow{\Theta(F(g',g))}  \Theta(g'g'^{-1} F(g'))  \xrightarrow[\cong]{\theta_{g'}} g' \Theta(g'^{-1} F(g')).$$

For a morphism in $\cat(\tG, \sC)$, namely a natural transformation $\eta\colon F\Rightarrow  E$, we define the components of $\tilde{\Theta}(\eta)$ as $$\tilde{\Theta}(\eta)_g= g\Theta(g^{-1}\eta_g).$$   In order to see that this is indeed a natural transformation $\Tilde{\Theta}(F)\Rightarrow \Tilde{\Theta}(E)$, note that the naturality square for $\tilde{\Theta}(\eta)_g$ translates to 
\[ \xymatrix{
g\Theta(g^{-1}F(g)) \ar[dd]_-{\theta^{-1}_{g}} \ar[rr]^-{g\Theta(g^{-1}\eta_{g})} && g\Theta(g^{-1} E(g))\ar[dd]^-{\theta^{-1}_{g}}\\
&&&&\\
\Theta(gg^{-1} F(g)) \ar[dd]_-{\Theta(F(g',g))} \ar[rr]^-{\Theta(\eta_g)} && \Theta(gg^{-1} E(g))\ar[dd]^-{\Theta(E(g',g))} \\
&&&&\\
\Theta(g'g'^{-1} F(g')) \ar[dd]_-{\theta_{g'}} \ar[rr]^-{\Theta(\eta_{g'})} && \Theta(g'g'^{-1} E(g')) \ar[dd]^-{\theta_{g'}}\\
&&&&\\
g'\Theta(g'^{-1} F(g')) \ar[rr]^-{g'\Theta(g'^{-1} \eta_{g'})} && g' \Theta(g'^{-1}E(g'))
}\]
and all the squares commute by the naturality of $\eta$ and of the $\theta_g$'s.


We check that with these definitions $\tilde{\Theta}$ is indeed an equivariant functor.  Let $F$ be an object of $\cat(\tG, \C)$. We need to check that for $h\in G$, $\tilde{\Theta}(hF)=(h\tilde{\Theta}(F))$. On objects, it is not hard to see that the two functors  agree:
$$ \tilde{\Theta}(hF)(g)=g\Theta(g^{-1}hF(h^{-1}g))=hh^{-1}g\Theta(g^{-1}hF(h^{-1}g))=h\tilde{\Theta}(F)(h^{-1}g)=(h\tilde{\Theta}(F))(g).$$

On morphisms $(g', g)$ they agree by the commutativity of the following diagram where the top row is $\tilde{\Theta}(hF)(g', g)$ and the bottom row is $(h\tilde{\Theta}(F))(g',g)$. To avoid cluttering the diagram, we denote by $f$ the morphism  $F(h^{-1}g', h^{-1}g)$ in $\C$.

\begin{center}
\[
\xymatrix{
g\Theta(g^{-1}hF(h^{-1}g)) \ar[dd]_= \ar[rr]^-{\theta^{-1}_g} && \Theta(hF(h^{-1}g)) \ar[dd]_-{\theta_h} \ar[rr]^-{\Theta(hf)}&&  \Theta( hF(h^{-1}g')) \ar[rr]^-{\theta_{g'}} \ar[dd]^-{\theta_h} &&g'\Theta(g'^{-1}hF(h^{-1}g')) \ar[dd]^=\\
&&&&&&&&\\
hh^{-1}g\Theta(g^{-1}hF(h^{-1}g)) \ar[rr]_-{h\theta^{-1}_{h^{-1}g}} && h \Theta(F(h^{-1}g))  \ar[rr]^-{h\Theta(f)}&& h \Theta( F(h^{-1}g')) \ar[rr]_-{h\theta_{h^{-1}g'}}  &&hh^{-1}g'\Theta(g'^{-1}hF(h^{-1}g')) 
}\]
\end{center}

The center square is just the naturality square for the natural transformation $\theta_h$. The right square is the diagram from the compatibility we have required of the $\theta_g$'s as expressed in the diagram above \myref{afterthetadiagram}, and it is not hard to check that the commutativity of the first square also follows from the same compatibility condition translated in terms of inverses. 

On natural transformations it is again not hard to check that $\tilde{\Theta}$ is equivariant:
$$\tilde{\Theta}(h\eta)_g= g\Theta(g^{-1}(h\eta)_g)=hh^{-1}g\Theta(g^{-1}h\eta_{h^{-1}g})=h\tilde{\Theta}(\eta)_{h^{-1}g}=(h\tilde{\Theta}(\eta))_g.$$
\end{proof}

\begin{cor}\mylabel{pseudo equiv cor}
 A pseudo equivariant functor $\Theta \colon \sC \ra \sD$, induces functors  $\tilde{\Theta}^H\colon \C^{hH}\ra \sD^{hH}$  on homotopy fixed points for all $H\subseteq G$.
\end{cor}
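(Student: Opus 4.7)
The plan is to invoke the preceding proposition directly and then extract fixed points. Proposition \ref{pseudo equiv} already does the hard work: given a pseudo equivariant functor $\Theta\colon \sC\ra \sD$, it produces an honest equivariant functor $\tilde{\Theta}\colon \cat(\tG, \sC)\ra \cat(\tG, \sD)$. So the corollary should follow by the formal observation that any $G$-equivariant functor between $G$-categories restricts to a functor between $H$-fixed point subcategories, for every subgroup $H\subseteq G$.

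More precisely, I would proceed in three short steps. First, note that by definition $\sC^{hH} = \cat(\tH, \sC)^H$. By Observation \ref{hfixedpoints}, this category can be identified (up to equivalence) with $\cat(\tG, \sC)^H$, since $\tH$ and $\tG$ are $H$-equivalent as contractible free $H$-categories. Second, apply Proposition \ref{pseudo equiv} to obtain the $G$-equivariant functor $\tilde{\Theta}\colon \cat(\tG, \sC)\ra \cat(\tG, \sD)$. Because $\tilde{\Theta}$ commutes with the $G$-action (hence with the $H$-action for any subgroup $H\subseteq G$), it carries $H$-fixed objects to $H$-fixed objects and $H$-fixed morphisms to $H$-fixed morphisms, and therefore restricts to a functor
\[
\tilde{\Theta}^H\colon \cat(\tG, \sC)^H\ra \cat(\tG, \sD)^H.
\]
Third, under the identification from Observation \ref{hfixedpoints}, this is exactly the sought-after map $\sC^{hH}\ra \sD^{hH}$.

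There is essentially no obstacle to this argument; the entire content is in Proposition \ref{pseudo equiv}. The only thing worth a quick sanity check is that the identification $\cat(\tG,\sC)^H \simeq \cat(\tH,\sC)^H$ is natural in the $G$-category $\sC$, so that the restriction of $\tilde{\Theta}$ genuinely corresponds to a functor between the homotopy fixed point categories as defined intrinsically for $\sC$ and $\sD$ regarded as $H$-categories. This naturality is immediate from the description of the equivalence on objects and morphisms (a functor $\tH\ra \sC$ is determined by where it sends the identity of $H$, and restriction along the evident inclusion $\tH\hookrightarrow \tG$ of $H$-categories provides the comparison), so no further work is required.
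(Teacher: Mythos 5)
Your argument is correct and matches the paper's intended (implicit) reasoning: the corollary is stated without proof because it is immediate from \myref{pseudo equiv} by passage to $H$-fixed points. The only minor overhead in your write-up is the careful comparison between $\cat(\tH,\sC)^H$ and $\cat(\tG,\sC)^H$ via \myref{hfixedpoints} — this is fine, but by the unambiguity noted in that observation one may simply take $\C^{hH}:=\cat(\tG,\C)^H$ and then restrict the $G$-equivariant functor $\tilde{\Theta}$ to $H$-fixed subcategories directly, which is the shorter route.
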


Note that the definition of $\tilde{\Theta}$ on objects makes sense for any functor $\Theta$ and $\tilde{\Theta}$  is equivariant on objects. However, without the isomorphisms encoded in the pseudo equivariance condition for $\Theta$, it is not possible to define the map $\tilde{\Theta}$ on morphisms, and thus the 2-categorical point of view is forced upon us. We write down explicitly the fixed point map $\tilde{\Theta}^H\colon \cat(\tG, \sC)^H\ra \cat(\tG, \sD)^H$ induced from a pseudo equivariant functor $\Theta\colon \sC\ra \sD$, because it sheds  light on how the 2-cells come in, and because instances of this induced fixed point map are relevant in equivariant algebraic $K$-theory. We will encounter interesting maps in $K$-theory which turn out to be fixed point maps of equivariant $K$-theory maps that arise from pseudo equivariant functors on the categorical level.

 Recall the explicit description of homotopy fixed points given in \myref{homotopy fixed points}. 
Let $(C, f)$ be an object in $\cat(\tG, \sC)^H$.  Under the induced map on $H$-fixed points 
$$\tilde{\Theta}^H \colon \cat(\tG, \sC)^H \to \cat(\tG, \sD)^H, $$

\noindent this gets sent to $(\Theta(C), f_\theta)$ where $f_\theta(g)$ is defined as the composite $$\Theta(C)\xrightarrow[\cong]{\Theta(f(g))} \Theta(gC)\xrightarrow[\cong]{\theta_g} g\Theta(C).$$

Since $f(e)=\id$ and $\theta_e=\id$, it follows immediately that $f_\theta(e)=\id$. To show that $f_\theta$ satisfies the cocycle condition, we use the fact that $f$ satisfies it, together with the diagram in \myref{weakequivariant}. By that diagram, the maps in the following composite themselves factor as composites:

\[ \xymatrix{
\Theta(C)\ar[rrrr]^-{\Theta(f(gh))}\ar[rrd]_-{\Theta(f(g))} &&&&\Theta(ghC)  \ar[rrrr]^-{\theta_{gh}(C)} \ar[rdr]_-{\theta_g(hC)} &&&&gh\Theta(C)\\
& &\Theta(gC) \ar[rru]_-{\Theta(gf(h))}  & &&& g\Theta(hC) \ar[rru]_-{g\theta_h(C)} &&\\
}\]

\noindent We can use the naturality diagram for $\theta_g$

\[ \xymatrix{
\Theta(gC) \ar[rrr]^-{\Theta(gf(h))} \ar[dd]_-{\theta_g(C)} &&&\Theta(ghC)\ar[dd]^-{\theta_g(hC)}\\
&&&&&\\
g\Theta(C) \ar[rrr]_-{g\Theta(f(h))} &&& g\Theta(hC)
}\]

\noindent to replace the middle maps in the diagram above and we get that

$$\Theta(C)\xrightarrow{\Theta(f(gh))} \Theta(ghC)\xrightarrow{\theta_{gh}(C)} gh\Theta(C)$$ is the same as $$ \Theta(C) \xrightarrow{\Theta(f(g))} \Theta(gC) \xrightarrow{\theta_g(C)} g\Theta(C) \xrightarrow{g\Theta(f(h))} g\Theta(hC) \xrightarrow{g\theta_h(C)} gh\Theta(C).$$

\noindent Thus $f_\theta(gh)=(g\cdot f_\theta(h)) f_\theta(g)$.

\begin{question}
Does every equivariant functor $\cat(\tG, \sC)\ra \cat(\tG, \sD)$ come from a pseudo equivariant functor $\sC\ra \sD$?
\end{question}

\subsection{Homotopy invariance of homotopy fixed points revisited}
We can use pseudo equivariance to weaken the hypothesis of \myref{homotopy fixed point weak equiv} from requiring the functor to be on the nose equivariant to requiring it to be pseudo equivariant. Surprisingly, we get the same conclusion. 

\begin{proposition}\mylabel{pseudo weak equiv}
Let $\Theta \colon \sC \ra \sD$ be a pseudo equivariant functor which is a nonequivariant equivalence. Then the induced functor
$$\cat(\tG, \sC)\to \cat(\tG, \sD)$$ is a weak $G$-weak equivalence.
\end{proposition}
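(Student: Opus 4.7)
The strategy is to follow the scheme of Proposition \myref{homotopy fixed point weak equiv}, but this time tracking the coherence isomorphisms $\theta_g$ from Definition \myref{weakequivariant}. By Observation \myref{EG} it suffices to produce equivalences of categories on $H$-fixed points for all $H \subseteq G$, and by Observation \myref{hfixedpoints} we may reduce further to the case of $G$-fixed points with $G$ arbitrary. So the main task is to show that the induced functor
$$\tilde{\Theta}^G \colon \cat(\tG,\sC)^G \to \cat(\tG,\sD)^G$$
is an equivalence of categories. From the explicit description given after Corollary \myref{pseudo equiv cor}, combined with Proposition \myref{homotopy fixed points}, the functor $\tilde{\Theta}^G$ sends an object $(C,f)$ to $(\Theta(C), f_\theta)$ with $f_\theta(g) = \theta_g \circ \Theta(f(g))$, and a morphism $\alpha \colon (C,f) \to (C',f')$ to $\Theta(\alpha)$.

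Faithfulness of $\tilde{\Theta}^G$ follows at once from faithfulness of $\Theta$. For fullness, a morphism $\beta \colon (\Theta(C), f_\theta) \to (\Theta(C'), f'_\theta)$ lifts to a unique $\alpha \colon C \to C'$ with $\Theta(\alpha) = \beta$ by full faithfulness; the coherence square for $\alpha$ against $f$ and $f'$ maps under $\Theta$, after absorbing and cancelling $\theta_g$'s via naturality, to the coherence square already known to commute for $\beta$, so faithfulness of $\Theta$ pulls commutativity back to $\sC$. For essential surjectivity, start with $(D, f)$ in $\cat(\tG,\sD)^G$ and pick $C \in \sC$ together with an isomorphism $\psi \colon D \xrightarrow{\cong} \Theta(C)$ using essential surjectivity of $\Theta$. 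Since $\Theta$ is fully faithful, there is a unique isomorphism $f'(g) \colon C \to gC$ in $\sC$ with
$$\Theta(f'(g)) \;=\; \theta_g^{-1} \circ g\psi \circ f(g) \circ \psi^{-1}.$$
Then by construction $f'_\theta(g) = g\psi \circ f(g) \circ \psi^{-1}$, so $\psi$ exhibits an isomorphism $(D, f) \cong (\Theta(C), f'_\theta) = \tilde{\Theta}^G(C, f')$ in $\cat(\tG,\sD)^G$, provided one knows $f'$ satisfies the cocycle condition.

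The main obstacle is precisely this cocycle verification, and this is the step where the pseudo equivariance data is genuinely used. To establish $f'(gh) = (g f'(h)) \circ f'(g)$, apply $\Theta$ to both sides, expand the right hand side using the definition of $\Theta(f'(g))$ and $\Theta(f'(h))$, and invoke naturality of $\theta_g$ at $C$ and at $hC$ to move $g\psi$ and $f(g)$ past one another, collapsing $\theta_g(C) \circ \theta_g(C)^{-1}$. The resulting expression carries the prefactor $(g\theta_h(C) \circ \theta_g(hC))^{-1}$, which by the coherence identity $\theta_{gh}(C) = g\theta_h(C) \circ \theta_g(hC)$ from Definition \myref{weakequivariant} collapses to $\theta_{gh}^{-1}$. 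After using the cocycle condition for $f$ on $D$ to combine $gf(h) \circ f(g) = f(gh)$, the expression coincides with $\Theta(f'(gh))$, and faithfulness of $\Theta$ yields the identity in $\sC$. This establishes essential surjectivity of $\tilde{\Theta}^G$ and completes the proof.
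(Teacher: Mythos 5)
Your proof is correct, but it takes a genuinely different route from the paper's. You reduce to $G$-fixed points and run a hands-on verification of faithfulness, fullness, and essential surjectivity for $\tilde{\Theta}^G$, tracking the coherence isomorphisms $\theta_g$ through the cocycle conditions; this is essentially a re-execution of the argument in Proposition \myref{homotopy fixed point weak equiv}, now complicated by the $\theta_g$'s, and the cocycle step you single out as the crux indeed checks out against the compatibility diagram in Definition \myref{weakequivariant}. The paper instead argues more abstractly: having already built the strictly equivariant functor $\tilde{\Theta}$ in Proposition \myref{pseudo equiv} and observed that it is a nonequivariant equivalence, it applies $\cat(\tG, -)$ to the \emph{strict} $G$-functor $\tilde{\Theta}$, invokes Proposition \myref{homotopy fixed point weak equiv} (the strict case) to conclude $\cat(\tG,\tilde{\Theta})$ is a weak $G$-equivalence, and then transports this back along the idempotency $G$-equivalences $\io \colon \cat(\tG,\sA) \to \cat(\tG,\cat(\tG,\sA))$ from Lemma \myref{idem}. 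Your approach is more elementary and self-contained (no appeal to idempotency), and it makes explicit where the pseudo-equivariance data enters; the paper's approach is shorter, avoids redoing the diagram chase, and illustrates the general principle that $\cat(\tG,-)$ strictifies and then you may re-use the strict case. Both are valid.
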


\begin{proof}\mylabel{homotopy fixed point weak equiv 2}
 The equivariant map $$\tilde{\Theta}\colon \cat(\tG, \sC)\ra \cat(\tG, \sD),$$  given in \myref{pseudo equiv} is a nonequivariant equivalence with inverse $\widetilde{\Theta^{-1}}$.

\noindent We have a commutative diagram: 
$$\xymatrix{
\cat(\tG, \sC) \ar[d]_\io \ar[rrr]^{\tilde{\Theta}} & &&  \cat(\tG, \sD) \ar[d]^\io \\
\cat(\tG, \cat(\tG, \sC)) \ar[rrr]^{\cat(\tG,-)(\tilde{\Theta})}  &&& \cat(\tG, \cat(\tG, \sD))
}$$
By \myref{homotopy fixed point weak equiv}, the bottom map is  a weak $G$-equivalence, and by  \myref{idem} the vertical maps are $G$-equivalences. Therefore the top map is a weak $G$-equivalence.

\end{proof}

\begin{cor}\mylabel{pseudo weak equiv cor}
A pseudo equivariant functor  $\Theta \colon \sC \ra \sD$ which is a nonequivariant equivalence induces equivalences of homotopy fixed points
$$\C^{hH}\ra \sD^{hH}$$ for all $H\subseteq G$.
\end{cor}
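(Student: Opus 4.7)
The plan is to deduce this corollary as an immediate consequence of the preceding Proposition~3.4, combined with the fact that the classifying space functor commutes with fixed points. Concretely, given a pseudo equivariant functor $\Theta\colon \sC \to \sD$ that is a nonequivariant equivalence, Proposition~3.4 produces an induced on-the-nose equivariant functor
\[
\tilde{\Theta}\colon \cat(\tG,\sC) \to \cat(\tG,\sD)
\]
which is a weak $G$-equivalence, meaning that $B\tilde{\Theta}$ is a weak $G$-equivalence of $G$-spaces.

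First I would unpack Definition~2.1 of weak $G$-equivalence: a weak $G$-equivalence of classifying spaces is, by the standard convention in equivariant homotopy theory, a map inducing weak equivalences on all $H$-fixed point spaces for $H \subseteq G$. Combined with the key commutation $B(\sA^H) = (B\sA)^H$ noted in equation~(2.1), this gives a weak equivalence
\[
B\bigl(\cat(\tG,\sC)^H\bigr) = \bigl(B\cat(\tG,\sC)\bigr)^H \xrightarrow{(B\tilde{\Theta})^H} \bigl(B\cat(\tG,\sD)\bigr)^H = B\bigl(\cat(\tG,\sD)^H\bigr)
\]
for each $H \subseteq G$. By definition $\cat(\tG,\sC)^H = \sC^{hH}$ and similarly for $\sD$, so the functor $\tilde{\Theta}^H\colon \sC^{hH} \to \sD^{hH}$ (whose existence on objects and morphisms is already supplied by Corollary~3.3) induces a weak equivalence on classifying spaces, which is the desired equivalence of homotopy fixed point categories in the sense of Definition~2.1.

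No genuine obstacle remains at this stage: all of the substantive content—both the construction of the strictified equivariant functor $\tilde{\Theta}$ from the pseudo natural isomorphisms $\theta_g$, and the passage from a nonequivariant equivalence at the category level to a weak $G$-equivalence at the level of $\cat(\tG,-)$—has already been accomplished in Propositions~3.2 and~3.4. The corollary is genuinely a corollary, obtained by reading off the $H$-fixed point statement from the $G$-level weak equivalence via the commutation of $B$ with fixed points and Observation~2.7, which lets us regard $\tG$ and $\tH$ interchangeably as models for universal $H$-free contractible categories.
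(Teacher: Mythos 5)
Your argument only reaches the conclusion that the maps $B(\sC^{hH}) \to B(\sD^{hH})$ of classifying spaces are weak homotopy equivalences, and indeed you say so explicitly by invoking Definition~2.1. But the corollary is asking for something strictly stronger: that the functors $\sC^{hH} \to \sD^{hH}$ are equivalences of \emph{categories}. That this is the intended reading is clear from the introduction (which states this result as ``induces equivalences of categories on homotopy fixed points''), from the analogous statements such as \myref{equiv chain proj cor} (``the homotopy fixed point category\ldots is equivalent to the category\ldots''), and from \myref{morita proj}, where the conclusion is used as a categorical equivalence. Read in the weak-equivalence-of-classifying-spaces sense, the corollary would merely restate \myref{pseudo weak equiv} and carry no new content. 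A weak equivalence of nerves does not in general imply an equivalence of categories, so your deduction has a genuine gap.

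The fix is to stop short of taking classifying spaces and instead track the idempotence square from the proof of \myref{pseudo weak equiv} at the level of $H$-fixed point categories. The vertical maps $\io$ are equivalences of $G$-categories by \myref{idem} (there is a $G$-inverse and $G$-natural isomorphisms), so $\io^H$ is an honest equivalence of categories for each $H\subseteq G$. The bottom map $\cat(\tG,-)(\tilde\Theta)$ is the map from \myref{homotopy fixed point weak equiv} applied to the genuinely $G$-equivariant functor $\tilde\Theta$, and the proof of that proposition shows directly (via essential surjectivity, fullness, and faithfulness, together with \myref{hfixedpoints}) that the $H$-fixed point functors are equivalences of categories, not merely weak equivalences after $B$. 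Two-out-of-three in the resulting commutative square of $H$-fixed point categories then gives that $\tilde\Theta^H\colon \sC^{hH}\to\sD^{hH}$ is an equivalence of categories, which is the assertion of the corollary.
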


\section{Homotopy fixed points of module categories}

\subsection{$G$-rings and twisted group rings}
A \emph{$G$-ring} is a ring $R$ with a left action of $G$ by ring automorphisms. If $R$ is a topological ring, we ask for the action to be through continuous ring automorphisms. We have a homomorphism $G\rightarrow \Aut(R)$, and we write 
$g(r) = {}^gr$ for the automorphism $g\colon R\rtarr R$ determined by $g\in G$. 
Then ${}^{gh} r= g(h(r)) = {}^g({}^hr)$.

Note that when $R$ is a subquotient of $\bQ$, the only automorphism of $R$ is the identity 
and the action of $G$ must be trivial. However, it is well-known that even trivial $G$-actions on rings  yield nontrivial equivariant algebraic $K$-theory. For example, we will show that the equivariant algebraic $K$-theory of the topological rings $\R$ and $\bC$ with trivial $G$-action is equivariant topological real and complex $K$-theory. Nevertheless, we are interested in many examples where the group action on the ring is nontrivial such as the Galois extensions of rings, or
 the topological ring $\bC$ with $\Z/2\Z$   conjugation action.

Suppose that $R$ is a commutative $G$-ring with action given by $\theta\colon G\rtarr \Aut(R)$.  Observe that $R$ is an $R^G$-algebra, where $R^G$ is the subring of $G$-invariants. We can reinterpret $\theta$ as a group homomorphism $\theta\colon G\ra \End_{R^G} R$, and ask the question of when we can extend this to a ring map. More precisely, we seek to put a ring structure on the underlying abelian group of the group ring $R[G]$, for which  the map $\theta$ extends to a ring map. 

This naturally leads to the definition of  \emph{twisted group ring} (or \emph{skew group ring}), 
which we will denote by $R_G[G]$ (it is variously denoted in the literature also as $R\rtimes G$ or $R\ast G$). A more precise notation that takes into the action of $G$ on $R$ given by the homomorphism $\tha\colon G\rtarr \Aut(R)$ would be $R_\tha[G]$. However, the action of $G$ on $R$ will many times be implicit, so we will not adopt this more pendantic notation.

\begin{defn} As an $R$-module, \emph{the twisted, or skew, group ring} $R_G[G]$ is the same as the group ring $R[G]$, 
which is the case when $G$ acts trivially on $R$. 
We define the product on $R_G[G]$ by $R^G$-linear (not $R$-linear) extension 
of the relation
\[  (rg)\, (sh) = r\ \!{}^g\!s\, gh \]
for $r,s\in R$ and $g,h\in G$.  \end{defn}

Thus moving $g$ past $s$, ``twists" the ring element by the group action. Note that $R$ and $R^G[G]$ are subrings of $R_G[G]$ and 
\[  g\, r = {}^gr\, g. \]  Observe that the definition of the twisted multiplication in $R_G[G]$ is precisely what enables us to extend the group homomorphism $\theta\colon G\ra \End_{R^G} R$ to a ring homomorphism $$\theta\colon R_G[G] \ra \End_{R^G} R, \ \ \ \ \ \ \  (r\, g)\mapsto (s\mapsto r\ \!{}^g\!s).$$

\subsection{Modules over twisted group rings}

\begin{defn}
We call (left) $R_G[G]$-modules \emph{$G$-ring modules} or \emph{skew $G$-modules}.  
\end{defn}

\noindent Note that ${}^g(rs)=({}^gr)({}^gs)$ for all $r,s\in R$, thus $R$ is an example of an $R_G[G]$-module.

\begin{observation}\mylabel{semilinear}
An $R_G[G]$-module $M$ is a left $R$-module with a \emph{semilinear $G$-action}, i.e.,  $g(rm) = {}^gr(gm)$ for $m\in M$. If the action of $G$ on $R$ is trivial, then an $R[G]$-module is a left $R$-module $M$ with \emph{linear} $G$-action, namely, $g(rm)=r(gm).$ From this point of view an $R_G[G]$-linear map of $R_G[G]$-modules $f\colon M\ra N$ is a map of $R$-modules, which commutes with the $G$-action.
\end{observation}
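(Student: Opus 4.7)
The plan is to show that the data of an $R_G[G]$-module is equivalent to the data of an $R$-module equipped with a compatible semilinear $G$-action, and that the corresponding notions of morphism coincide. Both directions amount to unpacking the defining relation $g\,r = r^g\,g$ in $R_G[G]$, so the work is essentially bookkeeping once the right dictionary is set up.

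First I would note that there are inclusions of subrings $R \hookrightarrow R_G[G]$ (via $r \mapsto r\cdot e$) and $R^G[G] \hookrightarrow R_G[G]$ (in particular, a group homomorphism $G \hookrightarrow R_G[G]^{\times}$ via $g \mapsto 1 \cdot g$). Thus any $R_G[G]$-module $M$ becomes, by restriction, simultaneously a left $R$-module and a left $G$-set with $G$ acting by additive automorphisms, so $M$ is a $\mathbb{Z}[G]$-module. The compatibility of these two restricted structures is governed entirely by how $g$ and $r$ commute inside $R_G[G]$. Applying the defining multiplication rule with $s = 1$ and $h = e$ gives $g\,r = r^g\,g$ in $R_G[G]$, and acting on $m\in M$ yields
\[
g(rm) = (gr)m = (r^g g)m = r^g(gm),
\]
which is the claimed semilinearity.

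Conversely, I would start with an $R$-module $M$ with a $G$-action satisfying $g(rm) = r^g(gm)$, and define a map $R_G[G] \times M \to M$ by $(rg,m) \mapsto r(gm)$, extended additively. I would verify that this respects the twisted multiplication: using semilinearity,
\[
(rg)\bigl((sh)m\bigr) = r\bigl(g(s(hm))\bigr) = r\bigl(s^g\,g(hm)\bigr) = (r s^g)\bigl((gh)m\bigr) = \bigl((rg)(sh)\bigr)m,
\]
and that $1\cdot e$ acts as the identity. Since these two constructions are mutually inverse (restriction back along $R,G \hookrightarrow R_G[G]$ gives back the original $R$-module and $G$-action), the equivalence of data is established. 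Specializing to trivial $G$-action on $R$ collapses semilinearity to ordinary linearity, which is the second sentence.

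For the statement about morphisms, an $R_G[G]$-linear map $f\colon M \to N$ is, by restriction, automatically $R$-linear and $G$-equivariant. Conversely, because $R_G[G]$ is generated as a ring by the images of $R$ and $G$, any additive map which is both $R$-linear and $G$-equivariant is forced to satisfy $f((rg)m) = r f(gm) = r(g f(m)) = (rg)f(m)$, hence is $R_G[G]$-linear. The only subtlety worth watching is the distinction between $R^G$-linearity and $R$-linearity in the definition of the twisted multiplication, but this plays no role in the module-level translation since we read off the commutation relation $gr = r^g g$ and then let associativity of the action do the rest.
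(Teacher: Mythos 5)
The paper states this as an observation without an explicit proof, so there is nothing to compare against line by line, but your argument is correct and uses exactly the ingredients the paper sets up: the subring inclusion $R \hookrightarrow R_G[G]$, the group homomorphism $G \to R_G[G]^\times$, and the commutation relation $g\,r = r^g\,g$ that the paper records immediately after defining the twisted product. The verification that $(rg)\bigl((sh)m\bigr) = \bigl((rg)(sh)\bigr)m$ under your proposed action, and the back-and-forth identification of morphisms with $R$-linear $G$-equivariant maps, are both sound. Nothing is missing.
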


If $G$ is finite and $|G|$ is invertible in $R$, we obtain the following characterization of projective modules over $R_G[G]$, which will be crucial in our applications to $K$-theory of $G$-rings.

\begin{proposition}\mylabel{invertible}
If $G$ is finite and $|G|^{-1}\in R$, then an $R_G[G]$-module is projective if and only if it is projective as an $R$-module.
\end{proposition}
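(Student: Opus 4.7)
The plan is to prove both implications by standard change-of-rings arguments, with the key non-trivial ingredient being an averaging (Maschke-style) trick that uses the invertibility of $|G|$.

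For the forward implication, I would first observe that $R_G[G]$ is free as a left (and right) $R$-module on the basis $\{g\mid g\in G\}$. Consequently, a free $R_G[G]$-module is automatically free over $R$, and hence restriction of scalars along the inclusion $R\hookrightarrow R_G[G]$ sends projective $R_G[G]$-modules to projective $R$-modules, since a direct summand of a free $R_G[G]$-module is a direct summand of a free $R$-module.

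For the reverse implication, suppose $M$ is an $R_G[G]$-module that is projective over $R$. Consider the extended module $N := R_G[G]\otimes_R M$, which is projective over $R_G[G]$ because extension of scalars preserves projectivity. There is a natural $R_G[G]$-linear surjection $\ph\colon N\to M$ sending $x\otimes m\mapsto xm$. I would construct an $R_G[G]$-linear section $s\colon M\to N$ by the averaging formula
\[
s(m) \;=\; \frac{1}{|G|}\sum_{g\in G} g\otimes g^{-1}m,
\]
which makes sense because $|G|^{-1}\in R$. A direct check using the relation $gr = r^g g$ in $R_G[G]$ shows $s$ is $R$-linear, and a reindexing $g\mapsto hg$ shows that $s(hm) = h\cdot s(m)$, so $s$ is $R_G[G]$-linear in view of Observation~\ref{semilinear}. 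Since $\ph\circ s = \id_M$ by a trivial count, $M$ is an $R_G[G]$-summand of the projective module $N$, hence projective over $R_G[G]$.

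The main obstacle, which is really just a bookkeeping matter rather than a conceptual one, is verifying the $R$-linearity of the averaging section, since the twisted commutation relation $gr=r^g g$ interacts non-trivially with the slots of the tensor product over $R$; once that compatibility is in place, the $G$-equivariance is an immediate reindexing argument and the rest is formal.
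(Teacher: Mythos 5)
Your proof is correct, but it takes a genuinely different route from the paper's. The paper argues via the Hom-functor characterization of projectivity: it observes that $\Hom_{R_G[G]}(M,N)\cong \Hom_R(M,N)^G$ (this is the content of Observation~\ref{semilinear} applied to the conjugation action on maps), so the functor $\Hom_{R_G[G]}(M,-)$ factors as $(-)^G\circ\Hom_R(M,-)$ on $R_G[G]$-modules, and then invokes the exactness of $(-)^G$ when $|G|^{-1}\in R$ to conclude that exactness of the one is equivalent to exactness of the other. You instead run the classical Maschke argument at the level of modules: for the non-trivial direction you exhibit $M$ as an $R_G[G]$-linear retract of the extended module $R_G[G]\otimes_R M$ via the averaging section $s(m)=\frac{1}{|G|}\sum_g g\otimes g^{-1}m$, while for the easy direction you use freeness of $R_G[G]$ over $R$. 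The two arguments are close cousins --- the exactness of $(-)^G$ in the paper is itself a consequence of the averaging operator $\frac{1}{|G|}\sum_g g$, which is exactly the element behind your section --- but yours is more explicit and makes transparent which direction actually consumes the invertibility hypothesis, whereas the paper's is shorter by delegating the averaging to a standard fact about the fixed-point functor. One small additional virtue of your version: the easy direction (restriction of scalars preserves projectivity because $R_G[G]$ is $R$-free) is spelled out and visibly requires no hypothesis on $|G|$, a point the paper's formulation leaves implicit.
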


\begin{proof}
An $R_G[G]$-module $M$ is projective if and only if the functor $$\Hom_{R_G[G]}(M,-): \Mod(R_G[G])\ra \Mod(R_G[G])$$ is exact. This functor is always left exact, and it is also right exact precisely when $M$ is projective. Let  $M$ and $N$ be $R_G[G]$-modules. As noted in \myref{semilinear},  $M$ and $N$  are $R$-modules with semilinear $G$-action. Then the $R_G[G]$-module $\Hom_{R_G[G]}(M,N)$ is the $R$-module $\Hom_R(M,N)$ with semilinear $G$-action given by conjugation, i.e., for an $R$-linear map $f\colon M\ra N$, $gf(m)=g(f(g^{-1}m))$. Again from  \myref{semilinear}, we have that $$\Hom_{R_G[G]}(M,N) \cong \Hom_R(M,N)^G.$$
The fixed point functor $(-)^G$ on $R_G[G]$-modules is right exact when the order of $G$ is invertible in $R$. Thus when $|G|^{-1}\in R$, the functor $\Hom_{R_G[G]}(M,-)$ is exact precisely when the functor $\Hom_R(M,-)$ is exact.

\end{proof}

Of course, we do not have a similar statement for free modules. Clearly, a free $R_G[G]$-module is free over $R$, but the converse is not true: Freeness over $R$ definitely does not imply  freeness over $R_G[G]$. For a set $A$, let $R[A]$ denote the free $R$-module on the basis $A$. The following proposition shows how we can put an $R_G[G]$-module structure on $R[A]$ if $A$ is a $G$-set; this is equivalent to specifying a semilinear $G$-action on $R[A]$.

\begin{prop}\mylabel{permrep}  Let $A$ be a $G$-set and define
\[  g ( \sum_a\,r_a a) =  \sum_a {}^gr_a ga \]
for $g\in G$, $r_a\in R$, and $a\in A$. Then $R[A]$ is an $R_G[G]$-module.
\end{prop}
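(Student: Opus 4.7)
The plan is to use Observation~\ref{semilinear}, which identifies $R_G[G]$-modules with $R$-modules carrying a semilinear $G$-action. Since $R[A]$ is by construction the free $R$-module on the set $A$, it suffices to check that the stated formula $g(\sum_a r_a a)=\sum_a r_a^g\, ga$ is (i) a well-defined action of $G$ on $R[A]$ by additive maps, and (ii) semilinear with respect to the $R$-module structure. Feeding this into Observation~\ref{semilinear} then packages the data into an $R_G[G]$-module structure, and no further verification is needed.

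First I would check the group action axioms. Additivity in the argument is immediate from the formula, as is the identity $e\cdot x=x$ since $r^e=r$ and $ea=a$. For associativity, I would compute
\[
g\bigl(h(\textstyle\sum_a r_a a)\bigr)=g\bigl(\textstyle\sum_a r_a^h\, ha\bigr)=\sum_a (r_a^h)^g\, g(ha)=\sum_a r_a^{gh}\, (gh)a=(gh)(\textstyle\sum_a r_a a),
\]
using the two facts that $(r^h)^g=r^{gh}$ (because $G$ acts on $R$ by ring automorphisms) and $g(ha)=(gh)a$ (because $A$ is a $G$-set).

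Next I would check semilinearity: for $s\in R$ and $x=\sum_a r_a a$, one has $sx=\sum_a (sr_a)a$, so
\[
g(sx)=\sum_a (sr_a)^g\, ga=\sum_a s^g r_a^g\, ga=s^g\bigl(\textstyle\sum_a r_a^g\, ga\bigr)=s^g(gx),
\]
where the second equality uses that $G$ acts by ring homomorphisms on $R$. Thus $R[A]$ is an $R$-module equipped with a semilinear $G$-action, so by Observation~\ref{semilinear} it is an $R_G[G]$-module, with the scalar multiplication $(sg)\cdot x := s(gx)$.

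The main obstacle, if any, is simply bookkeeping: keeping the exponent convention $r^{gh}=(r^h)^g$ straight when verifying associativity, so that the twist in the definition of $R_G[G]$ and the semilinearity condition line up correctly. Everything else is formal, and the whole argument reduces to two short computations once Observation~\ref{semilinear} is invoked.
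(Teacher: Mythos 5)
Your proof is correct, and in fact the paper omits the verification entirely (the proposition is stated without proof, the author evidently considering it routine). Your two-step plan — reduce via Observation~\ref{semilinear} to checking that the formula gives a semilinear $G$-action, then verify the group-action axioms and semilinearity by direct computation — is exactly what one would write in. One small point worth making explicit: in the associativity computation, after applying $h$ the sum $\sum_a r_a^h\,ha$ is expressed over the permuted basis $\{ha\}$, so applying $g$ by the defining formula really means re-indexing to $b=ha$, computing $\sum_b (r_{h^{-1}b}^h)^g\,gb$, and then substituting back; this lands on $\sum_a r_a^{gh}\,(gh)a$ as you state, using $r^{gh}=(r^h)^g$ from the paper's exponent convention. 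With that bookkeeping made explicit, the argument is complete and matches the intended (unwritten) proof.
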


In \cite[6.8]{GMM}, following \cite[5.1]{kawakubo} we gave a classification of $R_G[G]$-module structures on free rank $n$ $R$-modules in terms of the homotopy fixed point category of the group $GL_n(R)$, regarded as a single object groupoid. It inherits a $G$-action from the $G$-action on $R$.

\begin{thm}\mylabel{CrossR}  Let $R$ be a $G$-ring. Then the set of isomorphism
classes of $R_G[G]$-module structures on the $R$-module $R^n$ is in canonical bijective 
correspondence with the isomorphism classes of objects in the homotopy fixed point category $GL_n(R)^{hG}= \cat(\tG, GL_n(R))^G$.
\end{thm}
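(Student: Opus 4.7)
The plan is to produce an explicit bijection by unraveling what an $R_G[G]$-module structure on the underlying $R$-module $R^n$ amounts to, and matching this directly with the explicit description of objects of $\cat(\tG, GL_n(R))^G$ supplied by Theorem~\ref{crossedhom}. First I would observe that, since $R_G[G] = R \oplus R\cdot G$ as an $R$-module and the $R$-action is fixed, prescribing an $R_G[G]$-module structure on $R^n$ is the same as prescribing, for each $g\in G$, an $R$-semilinear automorphism $\varphi_g\colon R^n\to R^n$ (meaning $\varphi_g(r v)=r^g\,\varphi_g(v)$) such that $\varphi_e=\id$ and $\varphi_{gh}=\varphi_g\circ\varphi_h$; this is just \myref{semilinear} applied to $R^n$.

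Next, I would decompose each semilinear map. Let $\si_g\colon R^n\to R^n$ denote the canonical $g$-semilinear map $(r_1,\ldots,r_n)\mapsto (r_1^g,\ldots,r_n^g)$, which is the entrywise $G$-action on $R^n$. Any $g$-semilinear automorphism factors uniquely as $\varphi_g = A(g)\circ \si_g$ for a well-defined $A(g)\in GL_n(R)$, because $\varphi_g\circ\si_g^{-1}$ is $R$-linear. Composing, and using that $\si_g\circ A(h)\circ \si_g^{-1} = g\cdot A(h)$ (applying $g$ entrywise, which is precisely the $G$-action on $GL_n(R)$), the condition $\varphi_{gh}=\varphi_g\circ\varphi_h$ translates into
\[ A(gh) = A(g)\cdot \bigl(g\cdot A(h)\bigr), \]
and $\varphi_e=\id$ gives $A(e)=1$. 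This is exactly the cocycle condition defining a continuous crossed homomorphism $A\colon G\to GL_n(R)$, i.e., an object of $\cat_{\times}(G, GL_n(R))$, which by \myref{crossedhom} is equivalent to $GL_n(R)^{hG}$. The assignment $\varphi \leftrightarrow A$ is clearly inverse to the assignment $A\mapsto (g\mapsto A(g)\circ \si_g)$, so it is a bijection between $R_G[G]$-structures on $R^n$ and objects of $GL_n(R)^{hG}$.

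Finally, I would descend this to isomorphism classes. Two structures with data $\varphi, \varphi'$ (and associated cocycles $A, A'$) give isomorphic $R_G[G]$-modules precisely when there is an $R$-linear isomorphism $\alpha\in GL_n(R)$ with $\alpha\circ\varphi_g = \varphi'_g\circ \alpha$ for all $g\in G$. Substituting $\varphi_g=A(g)\si_g$, $\varphi'_g=A'(g)\si_g$ and using $\si_g\circ \alpha\circ\si_g^{-1}=g\cdot\alpha$, this rearranges to
\[ \alpha\cdot A(g) = A'(g)\cdot (g\cdot\alpha), \]
which is exactly the morphism relation in the crossed functor category from \myref{crossedhom}. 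Hence the bijection on objects respects the equivalence relation of isomorphism, giving the claimed canonical bijection on isomorphism classes.

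The only delicate point, and the one I would be most careful about, is the bookkeeping of conventions: the cocycle identity produced above is $A(gh)=A(g)(g\cdot A(h))$, which is the convention of \myref{homotopy fixed points} rather than the one of \myref{wrong homotopy fixed points}. The isomorphism of categories in \myref{homotopy fixed points} ensures that this choice is harmless, but one has to invoke it to align the decomposition $\varphi_g=A(g)\si_g$ with the description used for $\cat(\tG, GL_n(R))^G$; the alternative decomposition $\varphi_g=\si_g\circ B(g)$ would produce the other convention. Beyond this, the argument is essentially a direct unwinding of the definitions.
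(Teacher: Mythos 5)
Your argument is correct, and it is (almost certainly) the intended one. Note first that the paper does not give a proof of this result in the text at all: it simply records that the classification was carried out in \cite[6.8]{GMM}, following \cite[5.1]{kawakubo}, so there is no internal proof to compare against. Your unwinding is the natural one and matches what one would expect those references to do: identify an $R_G[G]$-module structure extending the standard $R$-action on $R^n$ with a family of $g$-semilinear automorphisms $\varphi_g$ satisfying $\varphi_e=\id$, $\varphi_{gh}=\varphi_g\varphi_h$; peel off the canonical entrywise semilinear automorphism $\si_g$ to get a well-defined $A(g)=\varphi_g\si_g^{-1}\in GL_n(R)$; use $\si_gA\si_g^{-1}=g\cdot A$ to convert the multiplicativity of $\varphi$ into the crossed-homomorphism cocycle $A(gh)=A(g)(g\cdot A(h))$; and then check that $R_G[G]$-linear isomorphisms $\alpha\in GL_n(R)$ correspond exactly to the morphism relation $\alpha A(g)=A'(g)(g\cdot\alpha)$ of \myref{crossedhom}. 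Composing with the equivalence $\Pi^{hG}\simeq\cat_\times(G,\Pi)$ of \myref{crossedhom} gives the claimed bijection on isomorphism classes. Your care about matching cocycle conventions is appropriate; the only small inaccuracy is the parenthetical at the end: the alternative factorization $\varphi_g=\si_g\circ B(g)$ gives $B(g)=g^{-1}\cdot A(g)$ and the relation $B(gh)=(h^{-1}\cdot B(g))B(h)$, which is not literally the cocycle (\ref{wrong cocycle}); the passage between (\ref{wrong cocycle}) and (\ref{cocycle}) is rather the substitution $\bar f(g)=g\cdot f(g^{-1})$ from \myref{homotopy fixed points}. This is an aside and does not affect the proof.
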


\subsection{The category $\cat(\tG, \Mod(R))$}\label{modules}

For a $G$-ring $R$, the category of finitely generated $R$-modules $\Mod(R)$ becomes a $G$-category with action defined in the following way. Let $M$ be an $R$-module with action map $\gamma\colon R\times M\rightarrow M$. Then we let $gM= M$ as abelian groups, and we define the action map by pulling back the action on $M$ along $g \colon R\rightarrow R$:  $$\gamma_g\colon R\times M\xrightarrow{g \times \id} R\times M\xrightarrow{\gamma} M.$$
This twists the $R$-action on $M$ by the action of $G$ on $R$. Explicitly, the $R$ action on $gM$, which we will denote by $\cdot_g$ to differentiate from the $R$-action on $M$, is given by $$r\cdot_g m :=  {}^grm,$$
where on the right hand side of the equation we are using the action of $R$ on $M$.\\

We note that $$R_G[G]\otimes_R M\cong \bigoplus_{g\in G} gM.$$

For a morphism $f\colon M\ra N$, we define $gf\colon gM \ra gN$ by $(gf)(m)=f(m)$. Thus $gf$ is the same as $f$ as a homomorphism of abelian groups, but it interacts differently from $f$ with the scalar multiplication.

Note that in general $M$ is not necessarily isomorphic to $gM$ as $R$-modules. The identity of abelian groups $M=gM$ is not an $R$-linear map, since the $R$-action is different on the two sides of the equality. However, we do have an isomorphism of free $R$-modules $R^n\cong gR^n$, which plays an important role.

\begin{lemma}
The $R$-modules $R^n$ and $gR^n$ are isomorphic.
\end{lemma}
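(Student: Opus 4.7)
The plan is to construct an explicit $R$-linear isomorphism $\phi \colon R^n \to gR^n$ by applying the automorphism $g \colon R \to R$ coordinatewise. This is natural because the only way to ``untwist'' the $R$-action on $gR^n$ (where scalars act by $r \cdot_g m = r^g m$) is to use the ring automorphism $g$ itself.

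First I would unpack what $R$-linearity into $gR^n$ means: a map $\phi \colon R^n \to gR^n$ is $R$-linear precisely when $\phi(rm) = r \cdot_g \phi(m) = r^g\, \phi(m)$, where $r^g \phi(m)$ uses the ordinary $R$-module structure on $R^n$. Next I would define
\[
\phi(r_1, \ldots, r_n) = (g(r_1), \ldots, g(r_n)),
\]
and verify $R$-linearity directly: since $g$ is a ring automorphism, $g(r r_i) = g(r) g(r_i) = r^g\, g(r_i)$, so $\phi(r(r_1,\ldots,r_n)) = r^g\, \phi(r_1,\ldots,r_n)$ as required. Bijectivity is automatic because $g$ is a bijection of $R$, hence $\phi$ is its coordinatewise extension on $R^n$; an inverse is given by applying $g^{-1}$ in each coordinate.

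There is really no hard step here — the only thing to be careful about is matching the convention for the twisted action $\cdot_g$ with the direction in which $g$ is applied. One could equivalently think of $\phi$ as the image of the standard basis $\{e_1, \ldots, e_n\}$ of $R^n$ under the identity map to $gR^n$, extended $R$-linearly in the twisted structure: since $g(r) e_i = r \cdot_g e_i$ in $gR^n$, the $R$-linear extension automatically produces the map $(r_1,\ldots,r_n) \mapsto (g(r_1),\ldots, g(r_n))$. This perspective also makes clear why the analogous statement fails for a general module $M$: we used nothing about $R^n$ except that it has a basis fixed (setwise) by the $G$-action, which is exactly what distinguishes the free case from the general one flagged in the paragraph preceding the lemma.
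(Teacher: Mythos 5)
Your proof is correct and is essentially the same as the paper's: the paper defines the map on the standard basis by $e_i \mapsto e_i$ and extends $R$-linearly (noting this forces $re_i \mapsto r^g e_i$), which unwinds to exactly your coordinatewise formula $(r_1,\ldots,r_n) \mapsto (g(r_1),\ldots,g(r_n))$. The only cosmetic difference is that the paper first checks $\{e_i\}$ is also a basis for $gR^n$ and then invokes linear extension, while you write the formula directly and verify linearity and bijectivity by hand.
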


\begin{proof}
Let $\{e_i\}$ be the standard basis for $R^n$. Note that this is also a basis for $gR^n$: if ${}^gr_1 e_1+\cdots +{}^g r_n e_n=0$, then ${}^g r_i=0$ for all $i$, so $r_i=0$ since $G$ acts by ring automorphisms. Also, every element in $gR^n$ can be written as 
$$(r_1,\cdots, r_n)= {}^g({}^{(g^{-1})} r_1) \ e_1+\cdots +{}^g({}^{(g^{-1})}r_n)\  e_n.$$ Now just define a map on basis elements as the identity
$e_i \mapsto e_i$ and extend linearly, i.e. $re_i\mapsto {}^g r e_i$.
\end{proof}

We emphasize that the objects of the category $\Mod(R)$ are $R$-modules $M$, which know nothing about the $G$-action on $R$. We used this action  to define a $G$-action on the category $\Mod(R)$, and now we will show how  the $G$-category $\Mod(R)$ relates  to the category of modules over the twisted group ring $R_G[G]$, which by \myref{semilinear} is the same as the category of $R$-modules with semilinear $G$-action.

\begin{proposition}\mylabel{equiv chain}
The homotopy fixed point category $\Mod(R)^{hG}$ is equivalent to the category $\Mod(R_G[G])$.
\end{proposition}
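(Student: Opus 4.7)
The plan is to construct mutually inverse functors
$\Phi\colon \Mod(R)^{hG} \to \Mod(R_G[G])$ and
$\Psi\colon \Mod(R_G[G]) \to \Mod(R)^{hG}$. Since both constructions will preserve the underlying $R$-module, the proposition reduces to a bijective translation between two descriptions of the extra structure on a fixed $R$-module: on the one hand, a cocycle of $R$-linear maps $f(g)\colon gM \to M$, and on the other hand, a semilinear $G$-action on $M$.

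For $\Phi$, given an object $(M,f)$ of $\Mod(R)^{hG}$ in the form of Proposition 2.10 (so $f(e) = \id_M$ and $f(gh) = f(g)\circ (g f(h))$), I send $(M,f)$ to $M$ equipped with its given $R$-action and with the $G$-action $g\cdot m := f(g^{-1})(m)$; this is well-defined because $g^{-1}M$ and $M$ coincide as abelian groups. The semilinearity identity $g(rm) = r^g(gm)$ is a direct consequence of the $R$-linearity of $f(g^{-1})\colon g^{-1}M \to M$, since the $R$-action on $g^{-1}M$ is $r\cdot_{g^{-1}}m = r^{g^{-1}}m$; rewriting $R$-linearity with $r$ replaced by $r^g$ yields exactly the desired formula. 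The group action axiom follows from the cocycle condition, together with the fact (from \S 4.3) that the $G$-action on a morphism in $\Mod(R)$ leaves the underlying map of abelian groups unchanged. On morphisms, the commutation square that defines a morphism in $\cat(\tG,\Mod(R))^G$ is precisely $G$-equivariance for the induced $R_G[G]$-structures, so $\Phi$ sends $\alpha\colon (M,f)\to (M',f')$ to the same underlying map, now viewed as $R_G[G]$-linear.

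For the inverse $\Psi$, an $R_G[G]$-module $N$ is sent to $(N_R, f^N)$, where $N_R$ is $N$ restricted to an $R$-module and $f^N(g)\colon gN_R \to N_R$ is defined on underlying abelian groups by $n \mapsto g^{-1}\cdot n$. Semilinearity of the $R_G[G]$-action ensures $f^N(g)$ is $R$-linear as a map $gN_R \to N_R$; the unit and cocycle axioms for $f^N$ are immediate from the group action axioms for $N$. An $R_G[G]$-linear map $N\to N'$ yields an $R$-linear map satisfying the commutation square of Proposition 2.10. The composites $\Phi\Psi$ and $\Psi\Phi$ are visibly the identity, since both constructions leave the underlying $R$-module untouched and the assignments $f(g)\leftrightarrow (g^{-1}\cdot -)$ are inverse to each other.

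The main obstacle is convention tracking: one must insert the inverses in the definition of the $G$-action at the right place so that the cocycle $f(gh) = f(g)(g f(h))$ translates to the associativity $(gh)\cdot m = g\cdot(h\cdot m)$, and one must verify that the $R$-linearity of $f(g)\colon gM \to M$ with respect to the \emph{twisted} scalar multiplication on $gM$ matches the semilinearity required of an $R_G[G]$-action. Once the dictionary is fixed, no homotopy-theoretic input is needed -- the proposition is a purely algebraic identification between two categories of modules.
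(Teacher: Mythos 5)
Your plan is essentially the paper's---extract the explicit cocycle description and convert it into a semilinear $G$-action---and writing out both $\Phi$ and $\Psi$ is a cleaner presentation than the paper's one-direction argument. But the step you declare immediate is the one that fails. With the cocycle of \myref{homotopy fixed points}, $f(gh)=f(g)\circ(g f(h))$, your formula $g\cdot m:=f(g^{-1})(m)$ yields
\[
(gh)\cdot m = f(h^{-1}g^{-1})(m) = f(h^{-1})\bigl((h^{-1}f(g^{-1}))(m)\bigr) = f(h^{-1})\bigl(f(g^{-1})(m)\bigr) = h\cdot(g\cdot m),
\]
a \emph{right} action rather than a left one (the middle equalities use the cocycle and the fact that $h^{-1}f(g^{-1})$ has the same underlying map of abelian groups as $f(g^{-1})$). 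The same defect appears on the $\Psi$ side: your $f^N(g)(n):=g^{-1}\cdot n$ gives $f^N(gh)(n)=h^{-1}g^{-1}n$, whereas $f^N(g)\circ(g f^N(h))$ sends $n$ to $g^{-1}h^{-1}n$, so the cocycle fails unless $G$ is abelian.

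The paper's proof instead sets $g\cdot m := f(g)(m)$ with no inverse, for which the same cocycle computation runs forward: $(gh)\cdot m = f(g)\bigl((g f(h))(m)\bigr)=f(g)\bigl(f(h)(m)\bigr)=g\cdot(h\cdot m)$. The $g^{-1}$ you inserted was precisely what made the semilinearity $g(rm)=r^g(gm)$ come out on the nose under the \S 4.3 convention $r\cdot_g m = r^g m$; that this tidied up semilinearity while silently breaking associativity should have been a warning. In fact, under that convention $M\mapsto gM$ is a right $G$-action on $\Mod(R)$ for nonabelian $G$ (one finds $(gh)M = h(gM)$), and the paper's own semilinearity line $f(g)(r\cdot_g m)=r^g f(g)(m)$ is not literal $R$-linearity of $f(g)$, which gives $r\,f(g)(m)$. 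With any consistent sign convention one of the two verifications picks up a conjugate. You should drop the $g^{-1}$ from the definition of the action so that associativity holds, and then carry the resulting twist through the semilinearity computation; as written, the verification you skip is the one that breaks.
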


\begin{proof}
From the description of homotopy fixed point categories given in \myref{wrong homotopy fixed points}, the objects of the homotopy fixed point category $\cat(\tG, \Mod(R))^G $ are $R$-modules $M$ together with compatible isomorphisms $f(g)\colon gM\xra{\cong} M$, one for each element $g\in G$, for which $f(e)=\id_M$ and which make the diagrams
$$\xymatrix{
(gh)M\ar[dr]_{f(gh)} \ar[rr]^{gf(h)} && gM \ar[dl]^{f(g)}\\
 & M &
}$$
commute.

Define an action of $G$ on $M$ by $g\cdot m=f(g)(m)$. This is indeed an action since $f(e)=\id_M$ and 
\begin{eqnarray*}
(gh)\cdot m &=& f(gh)(m)\\ 
&=& f(g)gf(h)(m)\\
&=& f(g)f(h)(m)\\
&=& g\cdot(h\cdot m).
\end{eqnarray*}

\noindent The second to last identification is just the definition of the $G$-action on morphisms of modules in $\Mod(R)$; the morphism $gf(h)$ is the same as $f(h)$ as a morphism of abelian groups.

Now note that this action is indeed semilinear:
$$g\cdot(r m)=f(g)(r\cdot_g m)= {}^g r f(g)(m)= {}^g r (g\cdot m).$$

\noindent Via this identification, the morphisms in the homotopy fixed point category are precisely the $G$-equivariant maps of $G$-modules.

Thus we have shown that the homotopy fixed point category $ \Mod(R){^hG} $ can be identified with the category of modules with semilinear $G$-action. Combining this with \myref{semilinear}, we obtain the desired result.
\end{proof}

By  \myref{hfixedpoints},  we immediately get the following corollary.

\begin{cor}
The homotopy fixed point category $\Mod(R)^{hH}$ is equivalent to the category $\Mod(R_H[H])$ for all subgroups $H\subseteq G$.
\end{cor}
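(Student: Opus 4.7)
The plan is to deduce this corollary directly from Proposition \myref{equiv chain} by a restriction-of-scalars argument combined with Observation \myref{hfixedpoints}.

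First, I would note that any $G$-ring $R$ is in particular an $H$-ring for any subgroup $H \subseteq G$, simply by restricting the action homomorphism $G \to \Aut(R)$ along the inclusion $H \hookrightarrow G$. Consequently the twisted group ring $R_H[H]$ is well-defined, and the $G$-action on the category $\Mod(R)$ described in \S \ref{modules} restricts to an $H$-action on $\Mod(R)$: if $gM$ denotes the module obtained by twisting scalar multiplication along $g \colon R \to R$, then this construction depends only on the ring automorphism associated to $g$, so it makes equally good sense for $h \in H$.

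Next, I would apply Proposition \myref{equiv chain} with the group $H$ and the $H$-ring $R$ in place of $G$. This yields an equivalence of categories
\[
\Mod(R)^{hH} \;\simeq\; \Mod(R_H[H]),
\]
where the left-hand side a priori means $\cat(\tH, \Mod(R))^H$, that is, the homotopy fixed points of $\Mod(R)$ regarded as an $H$-category.

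Finally, I would invoke Observation \myref{hfixedpoints}, which says that for any $H \subseteq G$ the two possible interpretations of ``$H$-homotopy fixed points'' of a $G$-category coincide up to equivalence: there is an identification
\[
\cat(\tG, \Mod(R))^H \;\simeq\; \cat(\tH, \Mod(R))^H.
\]
Combining these two equivalences gives the desired identification $\Mod(R)^{hH} \simeq \Mod(R_H[H])$. There is essentially no obstacle here; the only point requiring care is checking that the $H$-action used in Proposition \myref{equiv chain} is precisely the restriction of the $G$-action, which is immediate from the definitions in \S \ref{modules}.
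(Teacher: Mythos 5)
Your proposal is correct and matches the paper's own (terser) proof: the paper simply cites Observation \myref{hfixedpoints} to reduce to Proposition \myref{equiv chain} applied with $H$ in place of $G$, which is exactly the argument you spell out. The only thing you add is the explicit verification that the $H$-action on $\Mod(R)$ is the restriction of the $G$-action, which the paper leaves implicit.
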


Therefore, the $G$-category $\cat(\tG, \Mod(R))$ encodes the module categories over the twisted group rings for all subgroups $H$ as fixed point subcategories. Thus by studying the equivariant object $\cat(\tG, \Mod(R))$ we are implicitly studying the representation theory of all the subgroups at once.

Let $\sP(R)$ be the category of finitely generated projective $R$-modules. This becomes a $G$-category in the same way that $\Mod(R)$ does since $gP$ is projective if $P$ is so: if $P\oplus Q\cong R^n$, then $gP\oplus gQ\cong gR^n \cong R^n$.   The proof of \myref{equiv chain} goes through to show that the category $\cat(\tG, \sP(R))$ is equivalent to the category of finitely generated projective $R$-modules with semilinear $G$-action. Therefore, by \myref{invertible}, if $G$ is finite and the order of $G$ is invertible in $R$, we obtain \myref{equiv chain} and its corollary if we restrict to the category of finitely generated $R$-modules.

\begin{proposition}\mylabel{equiv chain proj}
Suppose $G$ is finite and $|G|^{-1}\in R$. The homotopy fixed point category $\sP(R)^{hG}$ is equivalent to the category $\sP(R_G[G])$.
\end{proposition}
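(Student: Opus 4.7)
The plan is to mimic the argument in \myref{equiv chain} verbatim, but restricted to the subcategory of projective modules, and then invoke \myref{invertible} to identify the resulting category with $\sP(R_G[G])$. First I would verify that the $G$-action on $\Mod(R)$ restricts to a $G$-action on $\sP(R)$: this is exactly the observation already made in the text that if $P\oplus Q\cong R^n$, then $gP\oplus gQ\cong gR^n\cong R^n$, so $gP$ is projective whenever $P$ is. Hence $\cat(\tG,\sP(R))$ is a well-defined $G$-category sitting inside $\cat(\tG,\Mod(R))$ as a full $G$-subcategory, and consequently $\sP(R)^{hG}$ sits inside $\Mod(R)^{hG}$ as a full subcategory.

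Next I would replay the proof of \myref{equiv chain}, simply noting that the functor constructed there sends an object $(M,f)$ of $\Mod(R)^{hG}$ to the $R$-module $M$ equipped with the semilinear $G$-action $g\cdot m = f(g)(m)$. Because the underlying $R$-module is unchanged by this passage, the restriction of that equivalence to the full subcategory $\sP(R)^{hG}$ lands in the full subcategory of $\Mod(R_G[G])$ consisting of those $R_G[G]$-modules whose underlying $R$-module is finitely generated projective. Conversely, an $R_G[G]$-module whose underlying $R$-module is finitely generated projective clearly comes from an object of $\sP(R)^{hG}$ under the inverse equivalence. So the equivalence of \myref{equiv chain} restricts to an equivalence between $\sP(R)^{hG}$ and the full subcategory of $\Mod(R_G[G])$ on those modules that are projective when regarded as $R$-modules.

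The final step is the key input: by \myref{invertible}, the hypothesis $|G|^{-1}\in R$ guarantees that an $R_G[G]$-module is projective if and only if its underlying $R$-module is projective. Thus the full subcategory identified in the previous step is precisely $\sP(R_G[G])$, and combining the two identifications yields the desired equivalence $\sP(R)^{hG}\simeq \sP(R_G[G])$.

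The only place where there is anything to check is the passage from \myref{equiv chain} to its projective-module analogue, and even that is formal once one observes that the equivalence of \myref{equiv chain} preserves the underlying $R$-module. The serious content of the proposition lies entirely in \myref{invertible}, without which one would only obtain $\sP(R)^{hG}$ as the category of $R_G[G]$-modules that are projective over $R$, rather than projective over $R_G[G]$; so I would foreground the use of the hypothesis $|G|^{-1}\in R$ as the essential step.
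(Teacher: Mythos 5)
Your argument is exactly the one the paper gives: restrict the equivalence of \myref{equiv chain} to the full subcategory of projectives (noting that it preserves the underlying $R$-module), and then use \myref{invertible} together with $|G|^{-1}\in R$ to identify the image with $\sP(R_G[G])$. Correct, and essentially the same approach; you have simply spelled out the intermediate "$R_G[G]$-modules projective over $R$" step that the paper leaves implicit.
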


\begin{cor}\mylabel{equiv chain proj cor}
Suppose $G$ is finite and $|G|^{-1}\in R$. The homotopy fixed point category $\sP(R)^{hH}$ is equivalent to the category $\sP(R_H[H])$ for all subgroups $H\subseteq G$.
\end{cor}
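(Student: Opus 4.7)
The plan is to deduce this from Proposition \ref{equiv chain proj} (the $G = G$ case) by restriction to the subgroup $H$, combined with Observation \ref{hfixedpoints}.

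First I would verify that the hypothesis of Proposition \ref{equiv chain proj} is preserved under passage to subgroups. Since $G$ is finite and $H \subseteq G$, Lagrange's theorem gives $|H| \mid |G|$, so $|H|^{-1}$ exists in $R$ as soon as $|G|^{-1}$ does (we may write $|H|^{-1} = [G:H]\cdot |G|^{-1}$). The $G$-action on $R$ restricts to an $H$-action by ring automorphisms, so $R$ is an $H$-ring with $|H|^{-1} \in R$, and $\sP(R)$ is an $H$-category (the $H$-action being the restriction of the $G$-action defined in \S\ref{modules}).

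Next I would invoke Observation \ref{hfixedpoints}, which identifies the $H$-fixed points of $\cat(\tG, \sP(R))$ with $\cat(\tH, \sP(R))^H$ via the $H$-equivalence $\tH \simeq \tG$ (both are $H$-free contractible groupoids). Concretely,
\[
\sP(R)^{hH} \;=\; \cat(\tG, \sP(R))^H \;\simeq\; \cat(\tH, \sP(R))^H,
\]
so the $H$-homotopy fixed points computed inside the ambient $G$-category agree with the homotopy fixed points of $\sP(R)$ viewed purely as an $H$-category.

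Finally I would apply Proposition \ref{equiv chain proj} with $G$ replaced by $H$: it yields an equivalence of categories $\cat(\tH, \sP(R))^H \simeq \sP(R_H[H])$. Composing the two equivalences gives $\sP(R)^{hH} \simeq \sP(R_H[H])$, which is what we want. I do not anticipate any obstacle here — the entire content of the corollary is the observation that Proposition \ref{equiv chain proj} is stated for arbitrary finite groups $G$ satisfying $|G|^{-1} \in R$, and both hypotheses are inherited by subgroups; Observation \ref{hfixedpoints} is precisely the bookkeeping tool that lets us apply the proposition at the subgroup level without any further work.
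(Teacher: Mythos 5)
Your proof is correct and matches the paper's intended argument exactly: the corollary follows from Proposition \ref{equiv chain proj} applied with $H$ in place of $G$, justified by Observation \ref{hfixedpoints}, and your observation that the hypothesis $|G|^{-1}\in R$ descends to $|H|^{-1}\in R$ by Lagrange's theorem fills in the one small detail the paper leaves implicit.
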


\subsection{The equivariant skeleton of free modules}\mylabel{skeleton}

If $M\cong R^n$, then $gM\cong gR^n\cong R^n$, so the $G$-action on $\Mod(R)$ restricts to an action on the category $\sF(R)$ of finitely generated free $R$-modules.

\begin{definition}\mylabel{glr}
 Let $\mathscr{GL}(R)$ be the category with objects the based free $R$-modules $R^n$ and morphism spaces
$$ \Mor_{\sGL(R)} (R^n, R^m)= \begin{cases} \emptyset &\mbox{if } n \neq m \\ 
GL_n(R) & \mbox{if } n =m. \end{cases} $$
\end{definition}
\noindent This is the same as the disjoint union of the one object categories $GL_n(R)$, i.e., $$\mathscr{GL}(R)= \coprod_{n\geq 0} GL_n(R),$$ and it is a skeleton of the category of $\iso \sF(R)$ of finitely generated free $R$-modules and isomorphisms.

 We note that in general, even if $\sC$ is a $G$-category, the skeleton $\sk\C$ is not closed under the $G$-action\footnote{We treat this general case in the appendix.}. However, if $R$ is a $G$-ring, we have an obvious action on $\sGL(R)$: it is trivial on objects and on morphisms $g$ acts entrywise. Clearly, the inclusion of the skeleton
 $$i\colon \sGL(R)\to \iso\sF(R)$$ is not an equivariant map since the object $R^n$ is fixed in $\sGL(R)$ but not in $\iso\sF(R)$. However, we can define an inverse to it which is equivariant. Fix isomorphisms $\ga_M\colon M\xra{\cong} R^k$ for all finitely generated free modules $M$, i.e., fix a basis $\{\ga_M^{-1}(e_i)=m_i\} $ for all $M$ such that $\ga_M=\ga_{gM}$ as isomorphisms of abelian groups. In other words, we pick the same basis for $M$ and $gM$; recall that $M$ and $gM$ are equal as abelian groups. We define the inverse equivalence $i^{-1}$ by $M\mapsto R^k$ on objects. Given an isomorphism  $M\ra N$ in $\iso\sF(R)$,  it maps to the composite $$R^k\xra{\ga_M^{-1}} M \xra{f} N \xra{\ga_N} R^k.$$
 
 We show that the map $i^{-1}$ is equivariant. Clearly, it  commutes with the $G$-action on objects, since the action is trivial in $\sGL(R)$ and if $M$ has dimension $k$ so does $gM$. Now let $f\colon M\ra N$ be an isomorphism in $\iso \sF(R)$, and suppose that $$f(m_i)= r_{i1}n_1 +\cdots + r_{ik}n_k.$$ 
 
\noindent  The morphism $gM\xra{gf} gN$ maps to $$R^k\xra{\ga_M^{-1}} gM \xra{gf} gN \xra{\ga_N} R^k.$$ On basis elements, this is $$(gf)(e_i)= r_{i1} \cdot_g e_1+\cdots + r_{ik}\cdot_g e_k = r_{i1}^g e_1 +\cdots + r_{ik}^g.$$ 
 
 \noindent Therefore, we get entrywise action by $g$ on the matrix representing $f$, and the map $$i^{-1} \colon \iso\sF(R) \to \sGL(R)$$ is $G$-equivariant. It is a nonequivariant equivalence, thus by \myref{homotopy fixed point weak equiv} we obtain the following result.
 
 \begin{proposition}\mylabel{skeleton equiv}
 Suppose $R$ is a $G$-ring. Then there is a weak $G$-equivalence $$\cat(\tG, \iso\sF(R))\to \cat(\tG, \sGL(R)).$$  
 \end{proposition}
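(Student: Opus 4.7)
The plan is to apply Proposition \myref{homotopy fixed point weak equiv} (homotopy invariance of homotopy fixed points) to a suitable equivariant functor $i^{-1}\colon \iso\sF(R) \to \sGL(R)$ which is a nonequivariant inverse to the skeleton inclusion $i\colon \sGL(R)\hookrightarrow \iso\sF(R)$. Note $i$ itself is a nonequivariant equivalence, but it cannot be $G$-equivariant: objects $R^n$ are fixed in $\sGL(R)$ (since $G$ acts trivially on objects there), but their images $R^n$ are not fixed in $\iso\sF(R)$. So we must construct the equivalence in the opposite direction.

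First I would define the $G$-action on $\sGL(R)=\coprod_{n\geq 0} GL_n(R)$ in the obvious way: trivially on objects and entrywise on matrices. Next, for each finitely generated free $R$-module $M$ I would fix a basis and thereby an isomorphism $\ga_M\colon M \xra{\cong} R^{\dim M}$, subject to the key compatibility $\ga_M = \ga_{gM}$ as isomorphisms of \emph{abelian groups} (this is possible precisely because $M$ and $gM$ are equal as abelian groups and have the same rank). Then $i^{-1}$ sends $M\mapsto R^{\dim M}$ and sends a morphism $f\colon M\to N$ to the composite $\ga_N \circ f \circ \ga_M^{-1}\colon R^{\dim M}\to R^{\dim N}$.

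The main step is checking that $i^{-1}$ is on the nose $G$-equivariant. Equivariance on objects is immediate. For morphisms, the task is to show $i^{-1}(gf) = g\cdot i^{-1}(f)$, i.e., that the matrix of $i^{-1}(gf)$ is obtained from the matrix of $i^{-1}(f)$ by applying $g$ entrywise. This is the computational heart of the argument: writing $f(m_i)=\sum_j r_{ij} n_j$, one unpacks the twisted scalar multiplication on $gM$ and $gN$ (where $r\cdot_g x = r^g x$), uses the compatibility $\ga_M=\ga_{gM}$ and $\ga_N=\ga_{gN}$, and reads off that the basis expansion of $(gf)(e_i)$ in $gN$ is $\sum_j r_{ij}^g\, e_j$. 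This is exactly what is carried out in the text immediately preceding the statement, so the verification is routine once one has set up the bases correctly.

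Once equivariance of $i^{-1}$ is established, $i^{-1}$ is a $G$-functor between $G$-categories which is a nonequivariant equivalence of categories (being a one-sided inverse to an equivalence). Applying Proposition \myref{homotopy fixed point weak equiv} directly yields that the induced functor $\cat(\tG, \iso\sF(R)) \to \cat(\tG, \sGL(R))$ is a weak $G$-equivalence, which is the desired conclusion. The subtle point worth flagging is that the obvious direction $i$ is \emph{not} a $G$-map; the whole content of the proof is in recognizing that the inverse can be chosen equivariantly, so that the skeletal replacement is compatible with the $G$-action even though a categorical skeleton in general is not (as warned by the earlier footnote).
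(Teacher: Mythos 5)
Your proposal is essentially identical to the paper's own argument: pick compatible bases so that $\ga_M = \ga_{gM}$ as maps of abelian groups, define $i^{-1}$ via conjugation by these isomorphisms, verify by the matrix-entry computation that $i^{-1}$ is strictly $G$-equivariant, and then invoke \myref{homotopy fixed point weak equiv}. The structure, the key observations (including that $i$ cannot be equivariant and that the inverse must be constructed instead), and the computational verification all match the paper.
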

 
 This is very useful because it will allow us to use the skeleton $\sGL(R)$ in equivariant algebraic $K$-theory without losing information about the entire category of free modules with its induced action of $G$. 

\subsection{Equivariant skeleta}\label{skeleta}
Nonequivariantly, it is always assumed in $K$-theory that when we take the classifying space of a category which is not small, such as $\sP(R)$, $\sF(R)$, or $\Mod(R)$, we are tacitly replacing the category by a small category which is equivalent to it, such as its skeleton.

As we have seen in Section \ref{skeleton}, the situation is a little trickier equivariantly, because we do not have an equivariant equivalence between a $G$-category and its skeleton. This is too much to hope for; however, we show that the discussion in Section \ref{skeleton} 
generalizes. What makes the general case trickier is the fact that unlike in the case of free modules where we showed that $R^n\cong gR^n$, in general, an object $C$ is not necessarily isomorphic to $gC$. 

We show that for a $G$-category $\C$, we can put a $G$-action on the skeletal category $\sk \C$, such that the inverse of the inclusion of the skeleton $i\colon \sk\C \ra \C$ is a $G$-map which is a nonequivariant equivalence. This implies by \myref{homotopy fixed point weak equiv} that the map $$\cat(\tG, \C)\to \cat(\tG, \sk \C)$$ is a weak $G$-equivalence. This suffices for our applications, because in equivariant algebraic $K$-theory we are only taking classifying spaces of categories of the form $\cat(\tG, \C)$.

For an object $C\in \C$, denote by $C^{rep}$ the representative of the isomorphism class of $C$ in $\sk \C$, so that if $C\cong D$, then $C^{rep}=D^{rep}$. We fix isomorphisms $\ga_C\colon C\xra{\cong} C^{rep}.$ The map
$$i^{-1}\colon \C\to \sk\C$$ is defined on objects as $C\mapsto C^{rep}$ and on morphisms as $$(C\xra{f} D)\  \mapsto \ (C^{rep}\xra{\ga_C^{-1}} C\xra{f} D \xra{\ga_D} D^{rep}).$$  

We define a $G$-action on $\sk\C$ in the following way. On objects, $$gC^{rep}:=(gC)^{rep}.$$
We remark that there is no way to consistently pick the representatives such that $gC^{rep}=(gC)^{rep}$ is an equality in $\C$. However, we do have isomorphisms in $\C$
$$(gC)^{rep} \xra[\ga_{gC}^{-1}]{\cong} gC \xra[g\ga_C]{\cong} gC^{rep}.$$

We define the action on morphisms of $\sk\C$. We defined $g(C^{rep}\xra{f} D^{rep})$ as
$$(gC)^{rep}\xra[\ga_{gC}^{-1}]{\cong} gC \xra[g\ga_C]{\cong} gC^{rep} \xra{gf} gD^{rep} \xra[g\ga_D]{\cong} gD \xra[\ga_{gD}]{\cong} (gD)^{rep}.$$

Now the map $i^{-1}$ is clearly equivariant on objects. We show it is also equivariant on morphisms. Let $f\colon C\ra D$ be a morphism in $\C$, which gets mapped by $i^{-1}$ to $C^{rep}\xra{\ga_C^{-1}} C\xra{f} D \xra{\ga_D} D^{rep}$ in $\sk\C$. Acting by $g$, we get $$(gC)^{rep}\xra[\ga_{gC}^{-1}]{\cong} gC \xra[g\ga_C]{\cong} gC^{rep}\xra[g\ga_C^{-1}]{\cong} gC \xra{gf} gD \xra[g\ga_D]{\cong} gD^{rep} \xra[g\ga_D]{\cong} gD \xra[\ga_{gD}]{\cong} (gD)^{rep}.$$
By composing the inverse isomorphism, this is the same as 
$$(gC)^{rep}\xra[\ga_{gC}^{-1}]{\cong} gC \xra{gf} gD \xra[\ga_{gD}]{\cong} (gD)^{rep},$$ 
which is just $i^{-1}$ applied to $gC \xra{gf} gD$, and therefore, the map $i^{-1}\colon \C\ra \sk\C $ is a $G$-map for the action we defined on $\sk\C$.

Since the equivariant algebraic $K$-theory construction involves replacing the usually non-small $G$-category of interest $\sC$ with $\cat(\tG, \sC)$, we can with clear conscience assume use of equivariantly skeletally small models when we apply the classifying space functor $B$. 

 \subsection{Equivariant Morita theory}\label{morita}
 
 We give a definition of \emph{equivariant Morita equivalence} of $G$-rings; the philosophy is that  this notion should capture Morita equivalences of  twisted group rings.
 
 \begin{definition}\mylabel{morita def}
 Two $G$-rings $R$ and $S$ are \emph{equivariantly Morita equivalent} if they are nonequivariantly Morita equivalent and the equivalence $$\Mod(R)\ra \Mod(S)$$ is pseudo equivariant.
 \end{definition} 
 
 In \cite{biland}, Biland gives a definition of equivariant Morita equivalence, and it is easy  to see that his definition agrees with ours\footnote{The only difference in the definitions is that Biland does not require $\theta_e=\id$, but we suspect that is a typo in his preprint. Also, we note that his notion of equivariance is not the standard one; it corresponds to  our notion of pseudo equivariance.}.
 Biland shows that \myref{morita def} is equivalent to having a $G$-equivariant bimodule, which provides the equivariant Morita equivalence. For the definition of $G$-bimodule and the details of the equivalence of the two statements we refer the reader to Biland's preprint \cite[Thm. A]{biland}.

 Note that a consequence of our definition of equivariant Morita equivalence and \myref{pseudo weak equiv} is the following proposition.
 
 \begin{proposition} If two $G$-rings $R$ and $S$ are equivariantly Morita equivalent, then there is an equivariant weak equivalence $$\cat(\tG, \Mod(R))\to \cat(\tG, \Mod(S)).$$ \end{proposition}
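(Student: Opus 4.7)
The plan is to observe that the proposition is essentially an immediate consequence of Proposition~\myref{pseudo weak equiv} (and its Corollary~\myref{pseudo weak equiv cor}) combined with Definition~\myref{morita def}. No additional technical work is required beyond unpacking the definitions.

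First, I would unwind the definition of equivariant Morita equivalence: by Definition~\myref{morita def}, the assumption that $R$ and $S$ are equivariantly Morita equivalent supplies us with a functor $\Theta \colon \Mod(R) \to \Mod(S)$ which (i) is a nonequivariant equivalence of categories (this is ordinary Morita equivalence) and (ii) is pseudo equivariant in the sense of Definition~\myref{weakequivariant}, i.e.\ comes equipped with natural isomorphisms $\theta_g$ satisfying the coherence condition from Section~\ref{pseudo1}.

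Second, I would apply Proposition~\myref{pseudo weak equiv} directly to $\Theta$. That proposition asserts precisely that a pseudo equivariant functor between $G$-categories which happens to be a nonequivariant equivalence induces a weak $G$-equivalence after applying $\cat(\tG, -)$. Feeding $\Theta$ into that machine yields the required equivariant weak equivalence
\[
\tilde{\Theta} \colon \cat(\tG, \Mod(R)) \to \cat(\tG, \Mod(S)),
\]
where $\tilde{\Theta}$ is the on the nose equivariant functor produced by Proposition~\myref{pseudo equiv} from the pseudo equivariant $\Theta$.

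There is no real obstacle here; the entire content of the proposition has been absorbed into the earlier machinery about pseudo equivariance. The only thing worth highlighting in the write-up is the role of the 2-cells $\theta_g$: they are what allow the Morita equivalence, which is only equivariant up to coherent isomorphism, to be rectified to a strictly equivariant functor between the $\cat(\tG, -)$ categories, from which the weak $G$-equivalence of classifying spaces (in the sense of Definition~\myref{weak equiv}) follows formally.
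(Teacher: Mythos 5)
Your proof is correct and follows exactly the route the paper intends: the paper itself introduces this proposition with the remark that it is ``a consequence of our definition of equivariant Morita equivalence and \myref{pseudo weak equiv},'' which is precisely the two-step unpacking you give. Nothing is missing.
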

 Thus we have a $G$-map which induces an equivalence on all fixed points
$$\cat(\tG, \Mod(R))^H\to \cat(\tG, \Mod(S))^H.$$ As we have shown in \myref{equiv chain} this ensures that the twisted group rings $R_H[H]$ and $S_H[H]$ are Morita equivalent in the classical sense for all $H\subseteq G$ .

We end with a consequence of equivariant Morita equivalence, which will be relevant in algebraic $K$-theory. Recall that a nonequivariant Morita equivalence $\Mod(R)\ra \Mod(S)$ restricts to an equivalence $\sP(R)\ra \sP(S)$ on the categories of finitely generated projective modules (for example, see \cite[II, 2.7.]{weibel}). 

\begin{lemma}\mylabel{morita proj}
If $R$ and $S$ are equivariantly Morita equivalent, then there is a weak  $G$-equivalence $$\cat(\tG, \sP(R)) \ra \cat(\tG, \sP(S))$$ which induces equivalences of the homotopy fixed point categories of finitely generated projective modules $\sP(R)^{hH} \ra \sP(S)^{hH}$ for all $H\subseteq G$.
\end{lemma}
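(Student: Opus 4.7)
The strategy is to invoke Proposition~\myref{pseudo weak equiv} and its Corollary~\myref{pseudo weak equiv cor} for the restricted functor on projective modules. By the definition of equivariant Morita equivalence (\myref{morita def}), we have a pseudo equivariant functor $\Phi \colon \Mod(R) \to \Mod(S)$ which is a nonequivariant equivalence of categories, equipped with coherent natural isomorphisms $\tha_g \colon \Phi \circ (g\cdot) \Rightarrow (g\cdot)\circ \Phi$ for $g \in G$. The goal is to show that $\Phi$ restricts to a pseudo equivariant functor on finitely generated projective modules, to which we can then apply the abstract results from Section~\ref{pseudo}.

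First, I would verify that the $G$-action on $\Mod(R)$ restricts to $\sP(R)$, which was already noted after Corollary~\myref{equiv chain proj cor}: if $P$ is a direct summand of $R^n$, then $gP$ is a direct summand of $gR^n \cong R^n$, so $gP$ is projective. Next, since $\Phi$ is a nonequivariant equivalence, it preserves the property of being a retract of a finitely generated free module in a way that commutes with finite direct sums, so classical Morita theory (cf.\ \cite[II, 2.7]{weibel}) guarantees that $\Phi$ restricts to an equivalence $\Phi \colon \sP(R) \to \sP(S)$. Then the natural isomorphisms $\tha_g$ restrict to the subcategories, and the coherence condition from Definition~\myref{weakequivariant} (including $\tha_e = \id$ and the cocycle identity) holds on the restriction because it holds on the larger categories. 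Hence $\Phi \colon \sP(R) \to \sP(S)$ is a pseudo equivariant functor which is a nonequivariant equivalence.

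Now I apply Proposition~\myref{pseudo weak equiv} to this restricted functor to obtain the desired weak $G$-equivalence
\[
\cat(\tG, \sP(R)) \to \cat(\tG, \sP(S)),
\]
and Corollary~\myref{pseudo weak equiv cor} delivers the induced equivalences on homotopy fixed point categories $\sP(R)^{hH} \to \sP(S)^{hH}$ for all $H \subseteq G$.

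The main obstacle, such as it is, lies in the first step: one must check that the restriction of $\Phi$ to projective modules is well-defined and that the 2-cells $\tha_g$ restrict cleanly. This is routine once one unwinds Biland's bimodule description of equivariant Morita equivalence, since tensoring with a bimodule preserves projectives and is compatible with the semilinear actions; the coherence data for the pseudo equivariance on $\Mod(R)$ is built from natural isomorphisms of bimodule tensor products, which are natural in the module variable and therefore restrict without modification to the full subcategory $\sP(R)$.
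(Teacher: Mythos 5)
Your proof is correct and takes essentially the same approach as the paper: both arguments restrict the pseudo equivariant Morita equivalence $\Mod(R) \to \Mod(S)$ to the projective subcategories using classical Morita theory, observe that the coherence 2-cells restrict as well, and then invoke Proposition~\myref{pseudo weak equiv} (and its corollary) to conclude. The paper's proof is just terser; your elaboration of why the restriction is well-behaved is reasonable supplementary detail.
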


\begin{proof}
Since $R$ and $S$ are equivariantly Morita equivalent, by definition we have a nonequivariant Morita equivalence $\Mod(R)\ra \Mod(S)$, which is pseudo equivariant. This restricts to an equivalence $\sP(R)\ra \sP(S)$, which is pseudo equivariant, and we get the result by applying \myref{pseudo weak equiv}. The second statement follows by passing to fixed points.
\end{proof}

\section{Equivariant algebraic $K$-theory of $G$-rings}
 Nonequivariantly, the algebraic $K$-theory space of $R$ is defined as the group completion of the classifying space of the symmetric monoidal category $\iso \sP(R)$ of finitely generated projective modules and isomorphisms, and this space is delooped using an infinite loop space machine such as  the operadic one developed by May in \cite{MayGeo} or the one based on $\Gamma$-spaces developed by Segal in \cite{segal}. 

The category $\iso \sP(R)$ is a $G$-category with action defined as in the previous section, and it is not hard to see that it yields a \emph{naive} $\OM$-$G$-spectrum, i.e., an $\OM$-spectrum with $G$-action. The Segalic infinite loop space machine has been generalized equivariantly by Shimakawa in \cite{Shimakawa}, and the operadic infinite loop space machine has been generalized equivariantly by Guillou and May in \cite{GM3}, to give \emph{genuine} $\OM$-$G$-spectra. We show in \cite{MMO} that the equivariant generalizations yield equivalent genuine $G$-spectra when fed equivalent input, so we can use either machine to deloop equivariant algebraic $K$-theory. We  describe these machines and the input they take in section \ref{delooping section}. It turns out that a symmetric monoidal category with $G$-action such as $\iso \sP(R)$ is inadequate input for these machines, but $\cat(\tG, \iso \sP(R))$ is the ``genuine" kind of input these machines take to produce a genuine $\OM$-$G$-spectrum with zeroth space the equivariant group completion of $B\cat(\tG, \iso \sP(R))$. 

The equivariant infinite loop space machines will provide a functorial model for the equivariant group completion of $B\cat(\tG, \iso \sP(R))$; however, we will first define the equivariant algebraic $K$-theory space of a $G$-ring $R$ via an explicit model of the equivariant group completion of the classifying space of a symmetric monoidal $G$-category, namely the equivariant version of Quillen's $S^{-1}S$-construction. This model allows us to run an equivariant version of the first part of Quillen's ``plus=Q" proof relating the group completion of the symmetric monoidal category of finitely generated projective $R$-modules and isomorphisms to the group completion of the topological monoid $\coprod BGL_n(R)$ of classifying spaces of principal $GL_n(R)$-bundles. Using this model, we show that the equivariant algebraic $K$-theory space of a $G$-ring $R$ is equivalent on higher homotopy groups with an equivariant interpretation of the ``plus" construction, namely the group completion of a topological $G$-monoid of $G$-equivariant $GL_n(R)$-bundles.

\subsection{Symmetric monoidal $G$-categories} 
We define a \emph{symmetric monoidal $G$-category} as a strict symmetric monoidal category $\C$ with $G$-action which commutes with the symmetric monoidal structure. Concisely, this is a functor $G\ra \mathrm{Sym} \cat^\mathrm{strict}$, from $G$ to the category of symmetric monoidal categories and \emph{strict} monoidal functors. However,  in practice, some symmetric monoidal categories that we care about have a $G$-action which preserves the monoidal structure only up to isomorphism, i.e., they are functors $G\ra \mathrm{Sym} \cat^\mathrm{strong}$, from $G$ to the category of strict symmetric monoidal categories and \emph{strong} monoidal functors. 

 The problem with the latter  is that if the $G$-action preserves the monoidal structure only up to isomorphism, the fixed point subcategories $\C^H$ are not necessarily closed under the monoidal structure. We show that applying the functor $\cat(\tG, -)$ rectifies symmetric monoidal categories for which the action functors $g\cdot$ are only strong symmetric monoidal to symmetric monoidal $G$-categories with action that preserves the monoidal structure on the nose. Therefore the homotopy fixed point categories $\C^{hH}$ are closed under the symmetric monoidal structure.\footnote{This issue becomes even more subtle for Waldhausen categories, which is analyzed in forthcoming work with C. Malkiewich.} 

Suppose that $\C$ is defined by a functor $G\ra \mathrm{Sym} \cat^\mathrm{strong}$. Then  $\C$ is a symmetric monoidal category for which the symmetric monoidal structure map $$\C\times \C \xra{\oplus} \C$$ is pseudoequivariant, where the $G$-action on $\C\times \C$ is diagonal, and for which  $gI\cong I$ for every $g\in G$, where $I$ is the unit object of $\C$. By \myref{pseudo equiv}, we get an on the nose equivariant functor

$$\oplus\colon \cat(\tG, \C\times \C) \cong \cat(\tG, \C) \times \cat(\tG, \C) \ra \cat(\tG, \C).$$ For $F_1, F_2$, the functor $F_1\oplus F_2$ is defined on objects as

$$(F_1\oplus F_2)(g)= g\big(g^{-1}F_1(g)\oplus g^{-1}F_2(g)\big),$$ which, of course, is the same as $F_1(g)\oplus F_2(g)$ when the $G$-action on $\C$ preserves $\oplus$ strictly. On a morphism $(g',g)$, $F_1\oplus F_2$ is defined as 
$$(F_1\oplus F_2) (g)\xra{\cong} F_1(g)\oplus F_2(g) \xra{F_1(g',g)\oplus F_2(g',g)}    F_1(g')\oplus F_2(g')\xra{\cong} (F_1\oplus F_2)(g').$$

It is not hard to see that the functor $F_I\colon \tG \ra \C$ defined by $F_I(g)=gI$, where $I$ is the unit of $\C$ is a unit for the symmetric monoidal structure defined above. Therefore, even when the $G$-action on $\C$ does not preserve the symmetric monoidal structure strictly, $\cat(\tG, \C)$ does become a symmetric monoidal $G$-category for which the action commutes with the symmetric monoidal structure.

\subsection{The equivariant group completion of the classifying space of a symmetric monoidal $G$-category}

 A Hopf $G$-space is a Hopf space with equivariant multiplication map and for which multiplying by the identity element is $G$-homotopic to the identity map such as, for example, $\OM X$ for a $G$-space $X$. The equivariant notion of group completion is captured by the fixed point maps being group completions.
	
	\begin{definition}
	A $G$-map $X\ra Y$ of homotopy associative and commutative Hopf $G$-spaces is an \emph{equivariant group completion} if the fixed point maps $X^H\ra Y^H$ are group completions for all $H\subseteq G$.
	\end{definition}
	
\begin{rmk}\mylabel{gpcompl} For a homotopy commutative topological $G$-monoid,  since the classifying space functor $B$ and the loop functor $\OM$ have the wonderful virtue of commuting with fixed points, the nonequivariant group completion map $M\ra \OM BM$ (\cite{mcduff}, \cite{MayClass}) is an equivariant group completion.\end{rmk}

	Note that the classifying space $B\C$ of a symmetric monoidal $G$-category is a Hopf $G$-space. We give a functorial construction of the group completion of $B\C$, following \cite{quillen1}.  The idea is to define the group completion on the category level. 	 We recall  the model for the categorical group completion in the nonequivariant case, and then we observe that the theory carries through equivariantly \emph{as long as the $G$-action preserves the symmetric monodical structure strictly.} 

\begin{definition}[\cite{quillen1}] Let $S$ be a symmetric monoidal category. The category $S^{-1}S$ has objects pairs $(m, n)$ of objects in $S$. A morphism $(m, n) \ra (p, q)$ in $S^{-1}S$ is an equivalence class of triples
$$(r, \ r\oplus m\xra{f} p, \ r\oplus n\xra{g} q)$$ where two triple are equivalent if there is an isomorphism of the  first entries that makes the relevant diagrams commute.
Composition for a pair of morphisms is defined as 
$$(r,  r\oplus m\xra{f} p, r\oplus n\xra{g} q) \circ (s,  s\oplus p\xra{\phi} u, s\oplus q\xra{\psi} v)=(s\oplus r,s\oplus r\oplus m\xra{\phi\circ(s\oplus f)} u,  s\oplus r\oplus n \xra{\psi \circ (s\oplus g)} v).$$
\end{definition}  

Note that $S^{-1}S$ is symmetric monoidal with $(m,n)\oplus (p,q)= (m\oplus p, n\oplus q)$, and there is a strict monoidal functor $S\ra S^{-1}S$ given by $m\mapsto (m,0)$, where $0$ is the unit of $S$. This induces a map of Hopf spaces \begin{equation}\label{bs map} BS \to BS^{-1} S, \end{equation} 

\noindent which, subject to a mild condition, was shown by Quillen to be a group completion when $S$ is a groupoid.

\begin{theorem}[\cite{quillen1}]\mylabel{quillen bs}
Let $S$ be a symmetric monoidal groupoid such that translations are faithful. i.e., $$\Aut(s)\ra \Aut(s\oplus t)$$ is injective for all $s,t\in S$. Then the map $BS\ra BS^{-1}S$ is a group completion.
\end{theorem}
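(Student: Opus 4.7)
The plan follows Quillen's original strategy, which splits naturally along the two conditions defining a group completion: the $\pi_0$ condition and the homological localization condition.

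First, I would verify that $\pi_0(BS^{-1}S)$ is the Grothendieck group of the abelian monoid $\pi_0(BS)$. For any object $(m,n)$ of $S^{-1}S$, there is a morphism $(0,0)\to (m\oplus n,\, n\oplus m)$ in $S^{-1}S$ given by the triple $(m\oplus n,\, \mathrm{id},\, \sigma_{m,n})$, where $\sigma_{m,n}$ is the symmetry. This exhibits $(m,n)\oplus (n,m)$ as lying in the same component as the unit $(0,0)$, so inverses exist on $\pi_0$. A routine check using the definition of composition and the symmetric monoidal structure of $S^{-1}S$ shows the map $\pi_0(BS)\to \pi_0(BS^{-1}S)$ is universal among monoid maps to a group, giving the identification with $K_0(\pi_0 S)$.

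The main obstacle is the homological statement: showing $H_*(BS^{-1}S;R)\cong \pi_0(S)^{-1}H_*(BS;R)$ for every commutative ring $R$. My approach is to use the translation action of $S$ on $S^{-1}S$ (acting by $s\cdot (m,n) = (s\oplus m,\, s\oplus n)$) together with a filtered colimit presentation: for each $s\in S$, the homology of the subcategory $s^{-1}S$ obtained by formally inverting $[s]$ is identified with the telescope $\colim_k H_*(BS)$ along multiplication by $[s]$, and passing to a cofinal filtered colimit over $\pi_0 S$ yields the localization. The key technical input is a homology fibration statement (Quillen's Theorem B, or equivalently the McDuff--Segal criterion): the comma categories for the translation functor $s\oplus -\colon S\to S$ need to have classifying spaces of the correct homotopy type. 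This is precisely where the faithfulness of translations enters: if $\Aut(s)\to \Aut(s\oplus t)$ is injective, then the comparison maps between fibers induce the expected homology operations, and the resulting spectral sequence degenerates into the desired localization.

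The hard part is really the homology fibration step; once it is in place, assembling the $\pi_0$ identification with the homology localization gives the group completion property in the sense recalled before the theorem. I expect the rest of the work, including verifying that $S^{-1}S$ is symmetric monoidal and that the structural map $S\to S^{-1}S$ is monoidal, to be formal consequences of the definition. The faithfulness hypothesis cannot be dropped: without it, extra automorphisms of stabilized objects would obstruct identifying the telescope with the honest localization, and one instead obtains only the plus-construction of $BS$ on path components, not the full localized homology.
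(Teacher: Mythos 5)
The paper does not prove this statement; it cites Quillen's theorem (via \cite{quillen1}), so there is no in-paper proof to compare against. Judged on its own, your $\pi_0$ step is correct: the triple $(m\oplus n,\ \mathrm{id},\ \sigma_{m,n})$ gives a morphism $(0,0)\to (m,n)\oplus(n,m)$ in $S^{-1}S$, exhibiting inverses on $\pi_0$, and universality is a routine check.

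The homology localization is the whole content of the theorem, and there is a genuine gap there. You identify the right ingredients (translation action, telescope over $\pi_0 S$, a homology-fibration/Theorem A--B style argument) but do not carry the argument out, and you acknowledge this. The specific gap is your treatment of the faithfulness hypothesis: saying that injectivity of $\Aut(s)\to\Aut(s\oplus t)$ ``induces the expected homology operations'' and that ``the resulting spectral sequence degenerates'' names neither the comparison map, nor the spectral sequence, nor the mechanism of degeneration, so it cannot be verified. In Quillen's argument the faithfulness is what makes the fiber categories of a carefully chosen projection functor out of the auxiliary category $\langle S, X\rangle$ (with $X=S$ or $X=S^{-1}S$) satisfy the contractibility, cofinality, or constant-homology hypothesis needed for Theorem A or the homology-fibration criterion to apply; pinning that down is exactly what the hypothesis buys, and is where the proof lives. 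Two smaller issues: Theorem B and the McDuff--Segal criterion are not ``equivalent'' (one yields a homotopy fiber sequence under a rigid condition, the other only a homology fibration under a weaker one), so the sketch should commit to one. And the closing claim that without faithfulness ``one instead obtains only the plus-construction of $BS$ on path components'' does not match the known failure mode; what actually breaks is subtler (cf.\ Thomason, \emph{Beware of phony multiplications on Quillen's $A^{-1}A$}), and that formulation should not be asserted without proof.
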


Now if $S$ is a symmetric monoidal  $G$-category with $G$-action that preserves $\oplus$, then $S^{-1}S$ is also a symmetric monoidal  $G$-category with diagonal action on objects. On morphisms, 
$$g\big((m,n) \xra{(r, \ f,\ f')} (p, q)\big)= (gm, gn)\xra{(gr, \ gf,\  gf')} (gp, gq).$$
Note that this only works, because the action of $G$ commutes with $\oplus$. The fixed point subcategory $S^H$ is also a symmetric monoidal category, thus we can form $(S^H)^{-1}(S^H)$, and it is not hard to see that the construction commutes with fixed points.

\begin{lemma}
Let $S$ be a symmetric monoidal $G$-category. Then 
$$(S^H)^{-1}(S^H) \cong (S^{-1}S)^H$$ for all $H\subseteq G$.
\end{lemma}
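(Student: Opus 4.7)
The plan is to construct the natural comparison functor $\Phi: (S^H)^{-1}(S^H) \to (S^{-1}S)^H$ induced by the inclusion $i: S^H \hookrightarrow S$ and show that it is an isomorphism of categories. Since by hypothesis the $G$-action on $S$ strictly preserves $\oplus$, the fixed subcategory $S^H$ inherits a strict symmetric monoidal structure and $i$ is a strict symmetric monoidal functor; Quillen's construction is functorial in such, so $i$ induces $\Phi$. The key structural observation to exploit throughout is that the $G$-action on $S^{-1}S$ is built componentwise from the action on $S$: it is diagonal on objects, $g(m,n) = (gm, gn)$, and componentwise on triples, $g(r,f,f') = (gr, gf, gf')$, and the equivalence relation defining morphisms (via isos $\alpha: r \to r'$ intertwining the $f$'s and $f'$'s) is $G$-equivariant because $G$ preserves $\oplus$ and composition strictly. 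In particular, any triple with entries in $S^H$ represents an $H$-fixed class, so $\Phi$ does land in $(S^{-1}S)^H$.

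I would first dispatch the bijection on objects: the diagonal action formula immediately identifies $\mathrm{Ob}((S^{-1}S)^H) = \mathrm{Ob}(S^H) \times \mathrm{Ob}(S^H) = \mathrm{Ob}((S^H)^{-1}(S^H))$. The main step is then bijection on morphism sets. Fix objects $(m,n), (p,q) \in S^H$. A morphism of $(S^{-1}S)^H$ is an equivalence class $[(r, f, f')]$ with $[(r, f, f')] = [(hr, hf, hf')]$ for every $h \in H$, which unpacks to the existence, for each such $h$, of an iso $\alpha_h: r \to hr$ in $S$ satisfying $hf \circ (\alpha_h \oplus \id_m) = f$ and $hf' \circ (\alpha_h \oplus \id_n) = f'$. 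The goal is to promote this family to a genuine representative with $r \in S^H$ and $f, f' \in S^H$, and conversely to show that two such representatives already related by an iso in $S$ are related by an iso in $S^H$.

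The main obstacle is the surjectivity step: given an $H$-fixed class, produce a representative strictly fixed by $H$. I would handle this by exploiting the flexibility of replacing $r$ by an isomorphic object (and correspondingly transporting $f, f'$ along the chosen iso), which rewrites the family $\{\alpha_h\}$ by conjugation and leaves the equivalence class unchanged; the coherence built into the $\alpha_h$'s (coming from the strict compatibility between $G$-action and $\oplus$) is exactly what is needed to choose such a replacement so that the new $r$ is honestly fixed, at which point $f$ and $f'$ are forced to be in $S^H$ as well by the intertwining identities. Injectivity is then a cleanup: given two $S^H$-representatives related by an iso $\alpha$ in $S$, the same rigidification argument applied to $\alpha$ yields an iso in $S^H$ witnessing their equivalence in $(S^H)^{-1}(S^H)$. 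Together these verifications establish that $\Phi$ is bijective on objects and on all morphism sets, i.e., an isomorphism of categories, and the construction is manifestly natural in $S$.
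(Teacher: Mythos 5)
Your outline --- define the comparison functor $\Phi$, dispatch objects, then argue a bijection on morphism sets --- is the right shape, and the object case is indeed immediate from the diagonal form of the action. The problem is your handling of the morphism surjectivity, which you yourself flag as the main obstacle and which is asserted rather than proved. Given an $H$-fixed class $[(r,f,f')]$ with witnessing isomorphisms $\alpha_h\colon r\to hr$, you must produce an isomorphism $\beta\colon r\to r'$ with $r'\in S^H$ and $\alpha_h = (h\beta)^{-1}\circ\beta$ for all $h$ --- that is, you must trivialize the cocycle $(\alpha_h)$. But the pair $(r,\alpha)$ is exactly an object of the homotopy fixed point category $S^{hH}$ in the sense of \myref{wrong homotopy fixed points}, and what you are asking for is that it lie in the essential image of $\io\colon S^H\to S^{hH}$; the paper itself shows (\myref{H1 trivial}) that this sort of essential surjectivity is governed by a nonabelian $H^1$ and is not automatic. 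What the data hands you for free is only that the \emph{stabilized} cocycle $\alpha_h\oplus\id_m$ is a coboundary, trivialized by $f$ itself since $p$ is $H$-fixed; passing from that to a trivialization of $\alpha_h$ on $r$ is precisely the substance of the lemma, and the sentence claiming ``the coherence built into the $\alpha_h$'s \dots\ is exactly what is needed'' names the target without producing $\beta$. The faithfulness half of your bijection inherits the same weakness, since it leans on ``the same rigidification argument''; it does go through under the additional hypothesis that translations in $S$ are faithful (then the witnessing $\alpha$ is unique, hence automatically $H$-fixed), but the lemma is stated without that hypothesis.

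For what it is worth, the paper offers no proof of this lemma beyond the remark that the commutation ``is not hard to see,'' so there is no argument of record to compare yours against; but the step you would need to supply appears genuinely nontrivial for an arbitrary symmetric monoidal $G$-groupoid $S$. A safe repair is to prove the statement only for $S$ of the form $\cat(\tG,\sC)$, which is the only case the paper actually uses: there $\io\colon S^H\to S^{hH}$ is an equivalence by \myref{idem}, and the required trivialization of $(\alpha_h)$ is exactly essential surjectivity of $\io$.
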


Also, note that if translations are faithful in $S$, i.e., if $\Aut(s)\ra \Aut(s\oplus t)$ is injective for all $s,t\in S$, then the same holds for the fixed point subcategories $S^H$. This has the following immediate consequence.

\begin{proposition}\mylabel{bs group completion}
Let $S$ be a symmetric monoidal $G$-groupoid such that translations are faithful.
Then the map $BS\ra BS^{-1}S$ is an equivariant group completion.
\end{proposition}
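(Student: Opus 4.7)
The plan is to reduce to Quillen's nonequivariant Theorem \myref{quillen bs} applied fixed-point-wise. By definition of equivariant group completion, it suffices to show that for every subgroup $H\subseteq G$, the map $(BS)^H\to (BS^{-1}S)^H$ is a nonequivariant group completion of homotopy associative and commutative Hopf spaces.

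First I would identify the two fixed point spaces. Since the classifying space functor commutes with fixed points as recorded in (\ref{B fixed points}), $(BS)^H = B(S^H)$. On the other side, the preceding lemma gives $(S^{-1}S)^H \cong (S^H)^{-1}(S^H)$, so $(BS^{-1}S)^H = B\bigl((S^H)^{-1}(S^H)\bigr)$. Under these identifications, the $H$-fixed points of the natural map $BS\to BS^{-1}S$ are realized as the natural map $B(S^H)\to B\bigl((S^H)^{-1}(S^H)\bigr)$ induced by $s\mapsto (s,0)$ on $S^H$. This is because the functor $S\to S^{-1}S$, $s\mapsto (s,0)$, is $G$-equivariant: the $G$-action on $S^{-1}S$ is diagonal, the unit $0$ of $S$ is $G$-fixed (being preserved strictly by the action on $S$), and the action on the first coordinate of $S^{-1}S$ agrees with that on $S$.

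Next I would check that Quillen's Theorem \myref{quillen bs} applies to $S^H$. As a full subcategory of a groupoid, $S^H$ is itself a groupoid. It is symmetric monoidal under the inherited structure because the $G$-action on $S$ commutes with $\oplus$, so $\oplus$ restricts to $S^H\times S^H\to S^H$. Faithfulness of translations transfers from $S$ to $S^H$: for objects $s,t\in S^H$, the restriction $\Aut_{S^H}(s)\to \Aut_{S^H}(s\oplus t)$ is just the restriction of $\Aut_S(s)\to \Aut_S(s\oplus t)$ to a subgroup, hence injective. Thus Theorem \myref{quillen bs} applies to $S^H$, yielding that $B(S^H)\to B\bigl((S^H)^{-1}(S^H)\bigr)$ is a group completion.

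There is no real obstacle here; the argument is essentially bookkeeping. The only point requiring mild care is the compatibility of the identifications on the two sides with the natural inclusion $S\to S^{-1}S$, which reduces to the equivariance of this functor and hence to the fact that the unit of $\oplus$ is $G$-fixed. Assembling these observations over all $H\subseteq G$ yields the equivariant group completion statement.
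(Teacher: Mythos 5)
Your argument is essentially identical to the paper's: both reduce to Quillen's Theorem \myref{quillen bs} applied fixed-point-wise, using that $B$ commutes with fixed points, that $(S^{-1}S)^H \cong (S^H)^{-1}(S^H)$ (the preceding lemma), and that faithfulness of translations is inherited by $S^H$. The only addition you make is explicitly verifying that the inclusion $S\to S^{-1}S$ is $G$-equivariant, which the paper treats as implicit.
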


Note that restricting to the subcategory of isomorphisms commutes with fixed points, namely $(\iso\C)^H=\iso(\C^H)$ for subgroups $H\subseteq G$. Since the algebraic $K$-theory of symmetric monoidal categories, and in particular the $K$-theory of rings, is defined with respect to the class of isomorphisms, this observation is crucial. We will also need the following useful result stating that restricting to the subcategory of isomorphisms commutes with applying the $\cat(\tG, -)$ functor.

\begin{lemma}\mylabel{iso}
For any $G$-category $\C$, we have an identification  $$\iso \cat(\tG, \C)\cong \cat(\tG, \iso \C).$$
\end{lemma}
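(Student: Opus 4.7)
The plan is to check that the two categories have the same objects, the same morphisms, and carry the same $G$-action, so that the obvious identity-on-underlying-data map is an isomorphism of $G$-categories.

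First, I would compare objects. An object of $\cat(\tG, \iso \C)$ is a functor $F\colon \tG \to \iso \C$, and an object of $\iso\cat(\tG, \C)$ is just a functor $F\colon \tG \to \C$ (since the object classes of $\iso\cat(\tG, \C)$ and $\cat(\tG, \C)$ coincide). The key observation is that $\tG$ is a groupoid: every morphism $(h,g)$ in $\tG$ has inverse $(g,h)$, hence is sent by any functor $F\colon \tG\to \C$ to an isomorphism in $\C$. Consequently every such $F$ factors through $\iso\C$, so the two classes of objects are literally identical.

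Next I would compare morphisms. A morphism in $\iso\cat(\tG, \C)$ is a natural isomorphism $\eta\colon F\Rightarrow F'$, i.e.\ a natural transformation each of whose components $\eta_g\colon F(g)\to F'(g)$ is an isomorphism in $\C$. A morphism in $\cat(\tG, \iso \C)$ is a natural transformation $\eta\colon F\Rightarrow F'$ of functors into $\iso\C$; since the morphisms of $\iso \C$ are exactly the isomorphisms of $\C$, each component $\eta_g$ is an isomorphism of $\C$, and the naturality squares in $\iso\C$ are the same diagrams as in $\C$. Thus morphisms on the two sides are literally the same data satisfying the same conditions.

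Finally, the $G$-action on both categories is defined by conjugation: $(gF)(x)=gF(g^{-1}x)$ on objects and $(g\eta)_x = g\eta_{g^{-1}x}$ on natural transformations, using the action of $G$ on $\C$ (respectively on $\iso\C$, which is the restriction of the action on $\C$ since $G$ sends isomorphisms to isomorphisms). Hence the identification above is $G$-equivariant, yielding the desired isomorphism of $G$-categories. The only subtlety worth flagging is the first step, but it is immediate from $\tG$ being a groupoid; everything else is purely bookkeeping.
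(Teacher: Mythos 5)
Your proof is correct and follows essentially the same argument as the paper: the key step in both is that $\tG$ is a groupoid, so every functor $\tG\to\C$ automatically lands in $\iso\C$, and morphisms on both sides are natural transformations with isomorphism components. You are slightly more explicit about checking $G$-equivariance of the identification, which the paper leaves implicit, but the substance is identical.
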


\begin{proof}
Note that a functor $F\colon \tG \ra \C$ actually lands in $F\colon \tG \ra \iso\C$ since every morphism in $\tG$ is an isomorphism. Therefore the objects of $\iso \cat(\tG, \C)$ and $\cat(\tG, \iso \C)$ are the same. Now a morphism in $\iso \cat(\tG, \C)$ is a natural transformation whose component maps are all isomorphisms, which is the same with a morphism in $\cat(\tG, \iso\C).$
\end{proof} 
 
 Now, using the explicit model for group completion we make the following definition of the equivariant algebraic $K$-theory space of a $G$-ring $R$. 

\begin{definition}\mylabel{bs def}
The equivariant algebraic $K$-theory space of a  $G$-ring $R$ is the $G$-space $K_G(R)= B(S^{-1}S),$ where $S$ is the symmetric monoidal $G$-category $\cat(\tG, \iso \sP(R))$.
\end{definition}

We define the equivariant $K$-groups as the equivariant homotopy groups of this space. Once we deloop this space equivariantly to a genuine  $\OM$-$G$-spectrum, it will turn out that these are the homotopy groups of that spectrum, and therefore, they have Mackey functor structure. Recall that for a subgroup $H\subseteq G$ and a $G$-space $X$, we have  
\begin{equation}\mylabel{pih}
\pi_i^H(X)=[(G/H)_+ \wedge S^i, X]_G \cong [S^i, X^H]=\pi_i(X^H),
\end{equation}
 where $[X,Y]_G$ denotes the set of homotopy classes of based $G$-maps $X\rightarrow Y$ between based $G$-spaces, and $X_+$ denotes the union of $X$ with a disjoint basepoint. We define the $K$-groups for $i\geq 0$.
 
\begin{definition}\mylabel{higher groups}
The algebraic $K$-theory groups are given by
$$K_i^H (R) = \pi_i^H(K_G(R)).$$
\end{definition}

\begin{remark}

We spell out what the equivariant $K_0$ is so that it is clear how it relates back to the nonequivariant algebraic definition. Nonequivariantly, $K_0(R)$ is the group completion of the abelian monoid $\mathcal{P}(R)$ of finitely generated projective $R$-modules, i.e., it is $\pi_0$ of the topological group completion of  $B(\iso\sP(R))$, the classifying space of the category of finitely generated projective modules and isomorpshisms. Equivariantly,  $K_0^H(R)\cong \pi_0(K_G(R)^H)$, and
$K_G(R)^H$ is the group completion of $B\cat(\tG, \iso\sP(R))^H$. Therefore $K_0^H(R)$ is the group completion of the abelian monoid of isomorphism classes of objects in the homotopy fixed point category $\iso \sP(R)^{hH}=\iso \sP(R_H(H))$, and thus it agrees with $K_0$ of the twisted group ring $R_H(H)$.
\end{remark}

 \subsection{``Plus" construction interpretation and connection to equivariant bundle theory}\mylabel{pluss}
 Quillen's first definition of higher algebraic $K$-groups was as the homotopy groups of the space $BGL(R)^+$, which he showed to be homotopy equivalent to the basepoint component of the group completion of the  topological monoid $B(\coprod_n GL_n(R))\cong\coprod_n BGL_n(R)$. Note that  this is the  monoid of classifying spaces of principal $GL_n(R)$-bundles under Whitney sum.  Fiedorowicz, Hauschild and May gave a first definition of equivariant algebraic $K$-groups of a ring $R$ with \emph{trivial} $G$-action  in \cite{FHM} by replacing this space with the monoid of classifying spaces of equivariant bundles. However, in their definition, since $G$ does not act on $R$, the equivariance group $G$ does not act on the structure group $GL_n(R)$ of the bundles; they are considering equivariant bundles that have commuting actions of $G$ and $GL_n(R)$ on the total space.
 
We generalize the definition of Fiedorowicz, Hauschild and May so as to allow nontrivial action of $G$ on the ring $R$. Instead of using the classifying spaces of equivariant $(G,GL_n(R))$-bundles, which correspond to a trivial group extension $$1\rightarrow GL_n(R) \rightarrow GL_n(R)\times G \xrightarrow{q} G \rightarrow  1,$$ we will use the classifying spaces of $(G, GL_n(R)\rtimes G)$-bundles, which correspond to split extensions
	$$1\rightarrow GL_n(R) \rightarrow GL_n(R) \rtimes G \xrightarrow{q} G \rightarrow 1.$$ 
For a precise definition of such equivariant bundles, see \cite{GMM}, or any of the earlier sources cited there. Suitable categorical models for classifying space of  $(G, GL_n(R)\rtimes G)$-bundles have been constructed in \cite{GMM}, and these are central to our definition. The relevant theorem is the following.
 
 \begin{theorem}[\cite{GMM}]\mylabel{bundle thm}
 If $G$ is discrete and $\PI$ is discrete or compact Lie, then $B\cat(\tG, \PI)$ is a classifying space for $(G, \PI \rtimes G)$-bundles. 
 \end{theorem}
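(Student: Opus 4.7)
The plan is to exhibit a specific principal $(G, \Pi \rtimes G)$-bundle with total space $B\cat(\tG, \widetilde{\Pi})$ and base $B\cat(\tG, \Pi)$, and then verify that it is universal. The relevant functor on the categorical level is the canonical map $\widetilde{\Pi} \to \Pi$ (where $\Pi$ is viewed as a one-object category) sending every object to $\ast$ and sending the unique morphism $g \to h$ to $hg^{-1}$. The category $\widetilde{\Pi}$ inherits a diagonal $G$-action from the action on $\Pi$ together with a commuting free left $\Pi$-action by translation on objects, and these assemble into an action of the semi-direct product $\Pi \rtimes G$. Applying the limit-preserving functor $\cat(\tG, -)$ produces the candidate map of $G$-categories, and the $\Pi$-action survives so that after taking $B$ we obtain a $G$-map of spaces.

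First I would verify that $B\cat(\tG, \widetilde{\Pi})$ is a universal $(G, \Pi \rtimes G)$-space. Using the adjunction of Remark~\myref{chaotic}, namely $\cat(\sC, \widetilde{X}) \cong \Map(\ob \sC, X)$, we can identify $\cat(\tG, \widetilde{\Pi})$ with the chaotic category on the mapping space $\Map(G, \Pi)$, which has contractible classifying space since every object is both initial and terminal. The key equivariant input is that for each graph subgroup $\Lambda_\phi = \{(\phi(k), k) : k \in K\} \subseteq \Pi \rtimes G$ determined by a subgroup $K \subseteq G$ and a continuous crossed homomorphism $\phi \colon K \to \Pi$, the $\Lambda_\phi$-fixed category is again chaotic: its objects are the $\Lambda_\phi$-fixed elements of $\Map(G, \Pi)$, and the chaotic structure is preserved under passage to fixed points. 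For non-graph subgroups the fixed set is empty, since $\Pi$ acts freely on $\ob \widetilde{\Pi}$. This identifies $B\cat(\tG, \widetilde{\Pi})$ with the universal space $E(G, \Pi \rtimes G)$ in the sense of tom Dieck.

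Second, I would identify the $\Pi$-orbit space. Since $\cat(\tG, -)$ is corepresented by $\tG$ and therefore commutes with the relevant colimits up to homotopy (or, more directly, since $\widetilde{\Pi}/\Pi = \Pi$ as categories and the argument can be run before taking classifying spaces), the induced map $B\cat(\tG, \widetilde{\Pi}) \to B\cat(\tG, \Pi)$ realizes the quotient by the free $\Pi$-action. By the general classification of equivariant principal bundles corresponding to the split extension $1 \to \Pi \to \Pi \rtimes G \to G \to 1$, isomorphism classes of $(G, \Pi \rtimes G)$-bundles over a $G$-CW complex $X$ are in bijection with $[X, E/\Pi]_G$ where $E$ is any universal $(G, \Pi \rtimes G)$-space. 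Combining the previous step with this classification identifies $B\cat(\tG, \Pi)$ as the classifying space.

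The main obstacle will be the equivariant content of Step~1: checking that for \emph{every} graph subgroup $\Lambda_\phi$ the fixed subcategory $\cat(\tG, \widetilde{\Pi})^{\Lambda_\phi}$ is nonempty and chaotic. Nonemptiness is precisely the assertion that every crossed homomorphism $\phi \colon K \to \Pi$ is realized by some $\Lambda_\phi$-fixed object of $\Map(G, \Pi)$, which one can exhibit by hand using the explicit description of homotopy fixed points in Proposition~\myref{homotopy fixed points} specialized to the group case of Theorem~\myref{crossedhom}. Chaoticness then follows because between any two such fixed objects there is a unique morphism in $\widetilde{\Pi}$, and one checks that this unique morphism is automatically $\Lambda_\phi$-fixed. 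The remaining continuity issues when $\Pi$ is compact Lie are handled by working throughout with the topological categories of Remark~\ref{topological}.
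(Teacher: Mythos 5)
Your strategy matches the one in \cite{GMM}: you exhibit the canonical map $B\cat(\tG, \widetilde{\Pi}) \to B\cat(\tG, \Pi)$ as a universal $(G, \Pi\rtimes G)$-bundle, using the identification of $\cat(\tG, \widetilde{\Pi})$ with the chaotic category on $\Map(G,\Pi)$ and verifying the tom Dieck fixed-point criterion on graph subgroups. This is the right skeleton, and the reduction of nonemptiness of the $\Lambda_\phi$-fixed objects to the crossed-homomorphism description of \myref{crossedhom} is exactly how the cited paper handles it. Two justifications in the middle of the argument are wrong as stated, though the conclusions you draw from them are correct.

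First, the $\Pi$- and $G$-actions on $\widetilde{\Pi}$ do \emph{not} commute (commuting actions would assemble into an action of the direct product $\Pi\times G$, not the semidirect product). They satisfy the semidirect relation $g\cdot(\sigma\cdot p) = g(\sigma)\cdot(g\cdot p)$, and it is this relation, not commutativity, that makes them into a $\Pi\rtimes G$-action. You should also check that the $\Pi$-action you use is the one under which the functor $(h,g)\mapsto hg^{-1}$ is $\Pi$-invariant; with left translation $\sigma\cdot p=\sigma p$ it is not, whereas the right-translation variant works and is the one compatible with the principal-bundle convention.

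Second, the assertion that $\cat(\tG,-)$ ``commutes with the relevant colimits up to homotopy'' because it is corepresented by $\tG$ is incorrect: corepresentable functors preserve limits, not colimits, as you yourself note earlier in the same paragraph when calling $\cat(\tG,-)$ limit-preserving. The quotient $\widetilde{\Pi}\to\Pi$ is a colimit, so this does not give you $\cat(\tG,\widetilde{\Pi})/\Pi \cong \cat(\tG,\Pi)$ for free. The identification is nevertheless true, but it must be verified directly: the source is chaotic on $\Map(G,\Pi)$ with a free translation $\Pi$-action, and $\cat(\tG,\Pi)$ has objects the pointed maps $G\to\Pi$ with $\Pi$'s worth of morphisms between any two objects; one checks by hand that the orbit category of the former is isomorphic to the latter. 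Your fallback phrase ``the argument can be run before taking classifying spaces'' gestures at this but does not supply the check. Once these two points are repaired, the argument goes through, including the compact Lie case, where the discreteness of $G$ forces every $\Lambda$ with $\Lambda\cap\Pi = 1$ to be a graph subgroup.
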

 
 Recall from \myref{glr} that $\sGL(R)= \coprod_n GL_n(R)$. By \myref{bundle thm}, the monoid of classifying spaces of $(G, GL_n(R)_G)$-bundles is $B\cat(\tG, \sGL(R))$, and a model for the group completion is, by \myref{gpcompl}, $$\OM BB\cat(\tG, \sGL(R)).$$

We proceed to show that we have an equivalence $$K_G(R)^H\simeq \OM BB\cat(\tG, \sGL(R))^H$$ on basepoint components, so these spaces have the same higher homotopy groups. This shows that the definition of \cite{FHM} of higher equivariant $K$-groups for a ring with trivial $G$-action agrees with the one given in \myref{higher groups}.

  Again, we will follow Quillen's nonequivariant proof \cite{quillen1}. We recall the definition of cofinality and then state the result that leads to showing that cofinality gives an equivalence on higher $K$-theory.  

\begin{definition}
A monoidal functor $F \colon S \ra T$ is cofinal if for every $ t\in T$ there is $t'\in T$ and $s \in S$ such that $ t\oplus t' =F (s).$
\end{definition}
\begin{proposition}[\cite{quillen1}]\mylabel{cofinality}
If $F \colon S \ra T$ is cofinal and $\Aut_{S}(s) \cong \Aut_{T} (F(s))$ for all $s\in S$, then the map $B(S^{-1}S) \ra B(T^{-1}T )$ induces an equivalence of  basepoint components.
\end{proposition}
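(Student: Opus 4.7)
My plan is to reduce the question to comparing higher homotopy groups, which can then be computed as filtered colimits of automorphism groups on both sides. Since both $B(S^{-1}S)$ and $B(T^{-1}T)$ are group-like H-spaces (their $\pi_0$ is already a group, being the Grothendieck group of a symmetric monoid), all their basepoint components are homotopy equivalent via translation. Hence it suffices to show that the induced map on basepoint components, which are themselves H-spaces with trivial $\pi_0$, induces an isomorphism on $\pi_n$ for all $n\geq 1$.

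The first and most substantial step is to identify these homotopy groups with filtered colimits of automorphism groups. I would show that for $n\geq 1$,
\[
\pi_n\bigl(B(S^{-1}S)\bigr) \;\cong\; \mathrm{colim}_{s\in S}\ \pi_n\bigl(B\mathrm{Aut}_S(s)\bigr),
\]
where the diagram is indexed by the translation preorder on $S$ (with $s\leq s\oplus t$ via $\sigma\mapsto \sigma\oplus \mathrm{id}_t$) and the transition maps are injective by the translation-faithfulness underlying \myref{quillen bs}. This identification can be obtained either by applying Quillen's Theorem B to the projection $S^{-1}S \to S$, $(m,n)\mapsto n$, and analyzing the resulting homotopy fibers, or by invoking the McDuff–Segal group completion theorem together with \myref{quillen bs} to identify $B(S^{-1}S) \simeq \Omega B(BS)$ and computing directly from the bar construction. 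The analogous formula holds for $T$.

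The second step is the cofinality comparison. Cofinality of $F$ says that every $t\in T$ admits $t'$ and $s\in S$ with $t\oplus t' = F(s)$; this is precisely the statement that $\{F(s)\}_{s\in S}$ is cofinal in the translation preorder on $T$. Consequently the restriction
\[
\mathrm{colim}_{s\in S}\ \pi_n\bigl(B\mathrm{Aut}_T(F s)\bigr) \;\xrightarrow{\ \cong\ }\; \mathrm{colim}_{t\in T}\ \pi_n\bigl(B\mathrm{Aut}_T(t)\bigr)
\]
is an isomorphism. The hypothesis $\mathrm{Aut}_S(s)\cong \mathrm{Aut}_T(F s)$ (naturally in translations) identifies the left-hand colimit with $\mathrm{colim}_{s\in S} \pi_n(B\mathrm{Aut}_S(s))$. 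Chaining these isomorphisms with the identifications from the first step gives the desired $\pi_n$-isomorphism, hence an equivalence on basepoint components.

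The main obstacle will be the first step. The category $S^{-1}S$ is not a groupoid even when $S$ is, so one cannot directly read off its homotopy type from automorphism groups, and the cleanest route requires either Theorem B (with a careful check that the relevant comma categories are sufficiently connected, using faithfulness of translations) or an appeal to the group-completion theorem, which itself requires the homotopy commutativity and the faithfulness hypothesis to control the Bousfield localization of homology at $\pi_0 S$. Once this identification is in hand, the cofinality step is essentially formal.
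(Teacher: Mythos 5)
Your plan follows the right outline—reduce to basepoint components, compare automorphism groups over a cofinal translation preorder—but the first step contains a genuine error that breaks the argument as written. You assert
\[
\pi_n\bigl(B(S^{-1}S)\bigr)\;\cong\;\mathrm{colim}_{s\in S}\ \pi_n\bigl(B\mathrm{Aut}_S(s)\bigr)\qquad (n\geq 1),
\]
but this is false. Take $S=\iso\sF(\bZ)$, so $\mathrm{Aut}_S(\bZ^n)=GL_n(\bZ)$ and $B\mathrm{Aut}_S(\bZ^n)$ is a $K(GL_n(\bZ),1)$. Your formula would give $\pi_2\bigl(B(S^{-1}S)\bigr)=\mathrm{colim}_n\,\pi_2(BGL_n(\bZ))=0$, yet this group is $K_2(\bZ)=\bZ/2$; already at $n=1$ it would give $GL(\bZ)$ rather than $K_1(\bZ)=GL(\bZ)^{\mathrm{ab}}$. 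The passage from $BS$ to $B(S^{-1}S)$ is a group completion, hence a homology localization; it does \emph{not} commute with $\pi_n$, and the basepoint component of $B(S^{-1}S)$ is the plus construction of $\mathrm{colim}_s\, B\mathrm{Aut}_S(s)$, which changes higher homotopy groups precisely because the plus construction kills the perfect radical of $\pi_1$. Neither Theorem~B applied to the projection $S^{-1}S\to S$ nor the McDuff--Segal theorem will yield the homotopy-group colimit formula you want.

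What the group completion theorem \emph{does} give, using the translation-faithfulness hypothesis from \myref{quillen bs}, is the homology statement
\[
H_n\bigl(B(S^{-1}S)_0\bigr)\;\cong\;\mathrm{colim}_{s\in S}\ H_n\bigl(B\mathrm{Aut}_S(s)\bigr),
\]
and similarly for $T$. Your cofinality argument then carries over verbatim to prove that $B(S^{-1}S)_0\to B(T^{-1}T)_0$ is a homology isomorphism: cofinality of $F$ makes $\{F(s)\}$ cofinal in the translation preorder on $T$, and the hypothesis $\mathrm{Aut}_S(s)\cong\mathrm{Aut}_T(Fs)$ identifies the colimits over $S$. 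One then needs one further (but standard) ingredient you currently omit: the basepoint components of $B(S^{-1}S)$ and $B(T^{-1}T)$ are H-spaces, hence simple spaces, and a map of simple spaces inducing a homology isomorphism is a weak homotopy equivalence. Substituting $H_n$ for $\pi_n$ and adding this last observation repairs the proof and recovers Quillen's original argument, which is indeed the route taken in \cite{quillen1} and in the exposition \cite{bertplusq}.
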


Nonequivariantly this is applied to the cofinal inclusion $\iso \sF(R)\hookrightarrow \iso \sP(R)$, which is in fact the idempotent completion. We wish to apply it to the fixed point maps of the inclusions
$$\cat(\tG, \iso \sF(R))^H\to \cat(\tG, \iso \sP(R))^H.$$

For this we recall from section \ref{modules} that the homotopy fixed point category $\cat(\tG, \iso \sF(R))^H$ is the category of finitely generated free $R$-modules with semilinear $H$-action and $H$-isomorphism. Similarly, $\cat(\tG, \iso \sP(R))^H$ is the category of finitely generated projective $R$-modules with semilinear $H$-action. This inclusion is cofinal; however it is not an idempotent completion anymore. One way to see this is to consider the case when $G$ is finite and $|G|^{-1}\in R$, so that, by \myref{equiv chain proj cor}, $\cat(\tG, \iso \sP(R))^H\simeq \sP(R_H[H])$. We have a commutative diagram of inclusions:

$$\xymatrix{
\cat(\tG, \iso \sF(R))^H \ar[rrr]  &&& \sP(R_H[H])\\
\sF(R_H[H]) \ar[u] \ar[rrru] &&&&
}$$

The diagonal map is an idempotent completion, but the map going straight up is not an equivalence, since free $R_H[H]$-modules do not coincide with modules with semilinear $G$-action which are free as $R$-modules.  Therefore, the top map is not the idempotent completion; it just factors it.

\begin{thm}\mylabel{plus equals ss}
There is an equivalence on connected basepoint components $$\OM BB\cat(\tG, \sGL(R))^H \simeq_0 K_G(R)^H.$$ \end{thm}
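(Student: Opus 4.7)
The plan is to mimic Quillen's nonequivariant ``plus$=S^{-1}S$'' argument fixed-subgroup by fixed-subgroup, using the skeleton equivalence from Proposition~\ref{skeleton equiv} to pass from $\sGL(R)$ to $\iso\sF(R)$, and Proposition~\ref{cofinality} to pass from free to projective modules.

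First, I would use Proposition~\ref{skeleton equiv} to replace $\cat(\tG,\sGL(R))$ by $T:=\cat(\tG,\iso\sF(R))$: since the weak $G$-equivalence $T\to \cat(\tG,\sGL(R))$ induces equivalences of classifying spaces on $H$-fixed points for every $H\subseteq G$, and both $B$ and $\OM$ commute with fixed points, we obtain
\[
\OM BB\cat(\tG,\sGL(R))^H \;\simeq\; \OM BBT^H \;=\; \OM B(BT)^H.
\]
This reduces the problem to comparing $\OM B(BT)^H$ with $K_G(R)^H=B(S^{-1}S)^H$, where $S:=\cat(\tG,\iso\sP(R))$.

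Second, I would show that two different models compute the group completion of the same Hopf space $BT^H$. On the one hand, $BT^H$ is a homotopy-commutative topological monoid (since $T$ is a symmetric monoidal $G$-category for which the action commutes with $\oplus$, and fixed points preserve symmetric monoidal structure), so by the McDuff--Segal theorem (Remark~\ref{gpcompl}) the map $BT^H\to \OM B(BT^H)$ is a group completion. On the other hand, $T$ is a symmetric monoidal $G$-groupoid with faithful translations (by Lemma~\ref{iso}, $T$ is a groupoid, and translations in a module category are always faithful), so Proposition~\ref{bs group completion} applies and yields that $BT^H\to B(T^{-1}T)^H$ is a group completion. Since group completions are unique up to weak equivalence on basepoint components, these two targets agree on basepoint components:
\[
\OM B(BT)^H \;\simeq_{0}\; B(T^{-1}T)^H.
\]

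Third, I would apply Quillen's cofinality result (Proposition~\ref{cofinality}) to the full inclusion $T^H\hookrightarrow S^H$, i.e., the inclusion of finitely generated free $R$-modules with semilinear $H$-action into finitely generated projective ones. Since the inclusion is full, the hypothesis on automorphism groups is automatic; cofinality then yields the equivalence on basepoint components
\[
B((T^H)^{-1}(T^H)) \;\simeq_{0}\; B((S^H)^{-1}(S^H)),
\]
and by the lemma that $(-)^H$ commutes with $S^{-1}S$ this equals $B(T^{-1}T)^H \simeq_0 B(S^{-1}S)^H = K_G(R)^H$.

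The main obstacle is verifying the cofinality of $T^H\hookrightarrow S^H$. Given $P\in S^H$, a finitely generated $R$-projective module carrying a semilinear $H$-action, one must exhibit $P'\in S^H$ with $P\oplus P'$ free as an $R$-module and still semilinearly $H$-acted upon. In the case $|H|^{-1}\in R$ one can use the identification $S^H\simeq\sP(R_H[H])$ from Corollary~\ref{equiv chain proj cor} and the fact that every finitely generated projective $R_H[H]$-module is a summand of a free $R_H[H]$-module, which is in particular free over $R$ with the standard permutation-type semilinear action from Proposition~\ref{permrep}; in general, induction $P\mapsto R_H[H]\otimes_R P$ supplies a complement inside a module of the form $R[H\times\text{basis}]$ with its permutation semilinear action, which is free over $R$ and lies in $T^H$. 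Putting the three steps together gives the claimed equivalence on basepoint components.
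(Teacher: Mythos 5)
Your proposal follows the same route as the paper's proof: pass from $\sGL(R)$ to $\iso\sF(R)$ via \myref{skeleton equiv}, reconcile the $\OM BB$ and $S^{-1}S$ models of group completion (which the paper's proof leaves implicit; you make it explicit using \myref{gpcompl}, \myref{bs group completion}, and uniqueness of group completions up to equivalence on basepoint components), and then invoke Quillen's cofinality \myref{cofinality} for the inclusion of free into projective modules with semilinear $H$-action.

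However, the ``in general'' sentence of your cofinality discussion does not go through. The observation that induction $P\mapsto R_H[H]\otimes_R P$ has a complement inside $R[H\times\text{basis}]$ only shows that the \emph{induced} module $R_H[H]\otimes_R P$ is cofinal: if $P\oplus Q_0\cong R^n$ nonequivariantly, then $R_H[H]\otimes_R P$ has equivariant complement $R_H[H]\otimes_R Q_0$ inside $R_H[H]^n\cong R[H\times\{1,\dots,n\}]$. It does not show that $P$ itself has such a complement: $P$ is an equivariant retract of $R_H[H]\otimes_R P$ only via the averaging map $p\mapsto \tfrac{1}{|H|}\sum_{h\in H}h\otimes h^{-1}p$, which requires $|H|^{-1}\in R$. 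Your argument via \myref{equiv chain proj cor} under the hypothesis $|H|^{-1}\in R$ is fine; absent that hypothesis a different argument is needed, and the paper itself asserts this cofinality without supplying one. A minor further point: in writing $\OM BBT^H$ you are implicitly treating $BT^H$ as a strict topological monoid, which holds for the permutative model $\cat(\tG,\sGL(R))^H$ but not on the nose for $\cat(\tG,\iso\sF(R))^H$; it is cleaner to keep $\OM BB$ applied to the $\sGL(R)$-model and use \myref{skeleton equiv} only to mediate the passage to the $S^{-1}S$ side, as the paper does.
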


\noindent (We used the notation $\simeq_0$ instead of $\simeq$ in order to emphasize that this equivalence only holds on basepoint components and to avoid possible misinterpretation.)
 \begin{proof}
 
 By \myref{skeleton equiv},  the inverse of the nonequivariant equivalence given by the inclusion of the skeleton $i\colon \sGL(R)\ra \iso\sF(R)$ induces a weak $G$-equivalence $$\cat(\tG, \iso\sF(R))\ra \cat(\tG, \sGL(R)).$$
 
Now $$\cat(\tG, \iso \sF(R))^H\to \cat(\tG, \iso \sP(R))^H$$ is cofinal. Therefore, by applying \myref{cofinality}, we get a weak $G$-equivalence of basepoint components $$B(S^{-1}S) \ra B(T^{-1}T )$$ for $S=\cat(\tG, \iso \sF(R))^H$ and $T=\cat(\tG, \iso \sP(R))^H$. 
 \end{proof}

\begin{remark}
We note that equivariantly there is no meaningful way to write down a decomposition of the $K$-theory space as a product of $K_0$ and a connected component, analogous to the widely used nonequivariant one, which is $$K(R)\simeq K_0(R)\times \OM_0BB\sGL(R).$$

The reason is that taking basepoint components does not commute with taking fixed points, so if we split off the basepoint component we change the equivariant homotopy type of the space.
  However, even nonequivariantly, this decomposition is not functorial, so it is technically more correct to define the $K$-theory space via a functorial model for the group completion of the classifying space of the symmetric monoidal category of finitely generated projective modules and isomorphisms.

  \end{remark}

\subsection{Equivariant delooping of the $K$-theory space}\label{delooping section}

We describe the May and Segal equivariant infinite loop space machines and the input they take. By a celebrated theorem of May and Thomason, the nonequivariant infinite loop space machines are equivalent. Their proof does not generalize equivariantly, but we have shown  in \cite{MMO} through a surprising chain of equivalences that the equivariant generalizations of the machines also produce equivalent $\OM$-$G$-spectra. So, up to equivalence,  we could use either machine to define the equivariant algebraic $K$-theory spectrum of a $G$-ring. The construction of each machine has its own advantages, and in some applications we have in mind we will need the specifics of one machine over the other. However, for the rest of this paper, we will study the equivariant homotopy type of the spectrum we get, and we will not need the specifics of either of these machines. We choose to define equivariant algebraic $K$-theory of a $G$-ring using the equivariant May machine, but we describe the alternative construction using the equivariant Segal machine and invoke the theorem by which they are equivalent. 
 
 \subsubsection{Equivariant May infinite loop space machine}
 Algebras over the Barratt-Eccles operad $\sO$ in $\cat$ with $\sO(j)=\mathcal{E}{\SI_j}$ are symmetric monoidal categories with strict unit and strict associativity, which are also known as \emph{permutative categories} (see \cite{MayPerm2}).  By analogy, having an $E_\infty$-operad in $G\cat$ allows one to define \emph{genuine permutative $G$-categories} as algebras over it, and the classifying spaces of these turn out to be, after group completion, equivariant infinite loop spaces. This is carried out in the program started by Guillou and May in \cite{GM3}.

Of course, there are permutative categories, i.e., algebras over the Barratt-Eccles operad $\sO$, which are also $G$-categories, and \cite{GM3} reserves the name \emph{naive permutative $G$-categories} for those. The reason is that their classifying spaces are $G$-spaces, which are naive equivariant infinite loop spaces, i.e., they have deloopings with respect to all spheres with trivial $G$-action, but not necessarily with respect to representation spheres. We note that in this light, what we defined as symmetric monoidal $G$-categories are \emph{naive symmetric monoidal $G$-categories}.

 \begin{definition}
The operad $\sO_G$ in $G\cat$ defined by $\sO_G(j)= \cat(\tG, \mathcal{E}{\SI_j})$ is an $E_\infty$-$G$-operad. A \emph{genuine permutative $G$-category} is defined to be an  $\sO_G$-algebra.
\end{definition} 

If we take any \emph{naive permutative category} $\sC$, i.e., a permutative category with a $G$-action,  since it is   an algebra over the Barratt-Eccles operad $\sO$ with $\sO(j)=\mathcal{E}{\SI_j}$, there are maps $$\sO(j)\times \sC^j \to \sC$$ compatible with the operad structure maps. Since $\cat(\tG, -)$ is a product preserving functor, these maps yield maps  $$\cat(\tG, \mathcal{E}{\SI_j})\times \cat(\tG, \sC)^j \to \cat(\tG, \sC),$$ and all the necessary diagrams still commute, so $\cat(\tG, \sC)$ is a genuine permutative category, and surprisingly, the only examples  of genuine permutative categories we know arise in this way. 

\begin{example}
Recall  \myref{glr} of the category $\sGL(R)$. It is a skeleton of the category of finitely generated free $R$-modules $\sF(R)$. The category $\sGL(R)$ is permutative under direct sum of modules and block sum of matrices $\oplus: GL_n(R) \times GL_m(R) \rightarrow GL_{n+m}(R)$, since associativity and the unit are strict and commutativity holds only up to isomorphism (reordering of the basis elements by conjugation).  
It is a naive permutative $G$-category with trivial $G$-action on objects and entrywise $G$-action on matrices. Therefore the category $\cat(\tG, \sGL(R))$ is a genuine permutative $G$-category. 
\end{example}

The original May infinite loop space machine, which we will denote by $\bK$, was developed in \cite{MayGeo}; it takes as input a permutative category and produces $\OM$-spectra with zeroth space the group completion of the classifying space of the input permutative category. An equivariant version of May's operadic infinite loop space machine is developed in \cite{GM3}\footnote{On the $G$-space level, operadic infinite loop space theory was first developed in unpublished work of Costenoble, Haushild, May, and Waner in the early 1980's.}. It takes as input an $\mathscr{O}_G$-category $\C$, i.e., a genuine permutative $G$-category, and produces a genuine orthogonal $\OM$-$G$-spectrum with zeroth space the equivariant group completion of $B\C$. We give a brief overview of the machine. As explained in \cite{GM3}, we need to use not only an $E_\infty$ operad $\mathscr{C}_G$ in $G\Top$ (such as $B\sO_G$), but also the Steiner operads $\mathscr{K}_V$ indexed over finite dimensional subspaces of a complete $G$-universe $U$, because these act on $V$-fold loop spaces. These operads are described in detail in \cite[Appendix]{GM3}. Intuitively, they are a generalization of the little disks operad, which is compatible with suspension: instead of considering a tuple of embeddings of $V$ into $V$, one considers a tuple of paths of embeddings of  $V$ into $V$, which at time 0 are the identity and at time 1 are disjoint. So the picture of an element in the Steiner operad would look like a cylinder with $V$ at one end and en element of the little disks operad at the other end.   We define the product operad $$\mathscr{C}_V=\mathscr{C}_G\times \mathscr{K}_V.$$ 
A $\mathscr{C}_G$-space can be viewed as an $\mathscr{C}_V$-space for any $V$, and this has the advantage that $\C_V$ acts on $V$-fold loop spaces via its projection onto $\mathscr{K}_V$. Let ${\bf C}_V$  be the monad of based $G$-spaces associated to the operad $\mathscr{C}_V$.

For a genuine permutative $G$-category $\sA$, the orthogonal $G$-spectrum $\bK_G(\sA)$ has spaces given by the monadic bar constructions
$$ \bK_G\sA(V)=B(\Sigma^V, {\bf C}_V, B\sA).$$ The structure maps for $V\subset W$ are given by $$\Sigma^{W-V} B(\Sigma^V, {\bf C}_V, B\sA)\cong B(\Sigma^W, {\bf C}_V, B\sA) \rightarrow B(\Sigma^W, {\bf C}_W, B\sA).$$

\begin{theorem}[\cite{GM3}]
For a genuine permutative $G$-category $\sA$, the spectrum  $\bK_G\sA$ is a genuine $\OM$-$G$-spectrum and there is a group completion map $B\sA\ra (\bK_G\sA)(0)$.
\end{theorem}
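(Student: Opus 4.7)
The plan is to mimic the classical nonequivariant proof of May's recognition principle (as in \cite{MayGeo}), adapted to the equivariant setting. The three essential ingredients are: (i) the action of $\mathscr{C}_V = \mathscr{C}_G \times \mathscr{K}_V$ on $V$-fold loop spaces via projection onto the Steiner operad $\mathscr{K}_V$, which acts on $\Omega^V Y$ for any based $G$-space $Y$; (ii) an equivariant approximation theorem for the Steiner operads, stating that for well-based, grouplike $G$-spaces $X$ the natural map $\mathbf{C}_V X \to \Omega^V \Sigma^V X$ is a weak $G$-equivalence; and (iii) the group completion property of the two-sided monadic bar construction.

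First I would verify that $B(\Sigma^V, \mathbf{C}_V, B\sA)$ is legitimately defined: the suspension functor $\Sigma^V$ is a right $\mathbf{C}_V$-functor (the Steiner operad acts on $V$-fold suspensions), and $B\sA$ is a $\mathbf{C}_V$-algebra because $\sA$ is an $\mathscr{O}_G$-algebra: applying $B$ gives a $B\mathscr{O}_G$-algebra, and the comparison of $E_\infty$ operads furnishes a weak equivalence $B\mathscr{O}_G \simeq \mathscr{C}_G$ over which one pulls back along $\mathscr{C}_V \to \mathscr{C}_G$. For the structure maps $\Sigma^{W-V}\bK_G\sA(V) \to \bK_G\sA(W)$, one uses the evident map of operads $\mathscr{C}_V \to \mathscr{C}_W$ induced by suspending Steiner paths, together with the canonical isomorphism $\Sigma^{W-V}\Sigma^V \cong \Sigma^W$ to commute $\Sigma^{W-V}$ past the bar construction up to a natural transformation.

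The main technical step is to prove that the adjoint structure maps
\[
\tilde\sigma \colon \bK_G\sA(V) \to \Omega^{W-V}\bK_G\sA(W)
\]
are weak $G$-equivalences, which gives the $\Omega$-$G$-spectrum property. Using the simplicial filtration of the bar construction and the fact that $\Omega^{W-V}$ commutes with geometric realization up to equivalence in our setting, the question reduces on each simplicial level to the statement that $\mathbf{C}_V^n B\sA \to \Omega^{W-V}\mathbf{C}_W^n B\sA$ is a $G$-equivalence after passing to the appropriate bar construction. This in turn is controlled by the equivariant approximation theorem for the Steiner operad together with the group completion theorem, which together imply that the zeroth space is the equivariant group completion of $B\sA$ and the higher spaces are genuine deloopings.

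Finally, the group completion map $B\sA \to (\bK_G\sA)(0)$ is the composite of the inclusion of $B\sA$ as the zero skeleton of $B(\mathrm{id}, \mathbf{C}_0, B\sA)$ with the natural map into $B(\Sigma^0, \mathbf{C}_0, B\sA)$. Its group completion property at each subgroup $H$ follows by passing to $H$-fixed points: since $B$ and $\Omega$ commute with fixed points and $\mathscr{C}_V^H$ is an $E_\infty$ operad for each $H$ (by the $E_\infty$-$G$-operad property of $\mathscr{C}_G$ combined with the Steiner operad), the nonequivariant group completion theorem applied to the fixed point bar constructions yields the result. The hard part is the equivariant approximation theorem for the Steiner operads, which requires the completeness of the chosen $G$-universe $U$ so that all relevant representations of all subgroups arise; this is precisely where the passage to indexing on a complete universe is essential, and it is the content of the appendix of \cite{GM3}.
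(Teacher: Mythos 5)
The paper does not actually prove this theorem: it is stated as a citation to \cite{GM3} with no proof given, only the surrounding discussion of the operads $\mathscr{C}_V = \mathscr{C}_G \times \mathscr{K}_V$, the monadic bar construction $\bK_G\sA(V) = B(\Sigma^V, \mathbf{C}_V, B\sA)$, and the structure maps. So there is no in-paper proof to compare your sketch against; I can only evaluate it as a reconstruction of the Guillou--May argument.

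Your outline identifies the correct ingredients: the action of $\mathscr{C}_V$ on $V$-fold loop spaces via the Steiner factor, the equivariant approximation theorem, and the group completion theorem applied fixed-pointwise (which is legitimate because the bar construction, $B$, and $\Omega$ all commute with $(-)^H$). You also correctly flag the approximation theorem for Steiner operads over a complete universe as the technical crux. A few points of imprecision worth tightening. First, the approximation theorem in the form you want is that $\mathbf{C}_V X \to \Omega^V \Sigma^V X$ is an equivariant \emph{group completion} in general; the statement that it is a $G$-equivalence for grouplike (or $G$-connected) $X$ is the special case one deduces, but it is the group-completion formulation that feeds directly into identifying $(\bK_G\sA)(0)$. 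Second, your description of the group completion map as ``the inclusion of $B\sA$ as the zero skeleton of $B(\mathrm{id}, \mathbf{C}_0, B\sA)$ followed by the natural map into $B(\Sigma^0, \mathbf{C}_0, B\sA)$'' collapses, since $\Sigma^0 = \mathrm{id}$ makes these the same bar construction; the group completion map should instead be understood as the composite $B\sA \to B(\mathbf{C}_V, \mathbf{C}_V, B\sA) \to B(\Omega^V\Sigma^V, \mathbf{C}_V, B\sA) \to \Omega^V B(\Sigma^V, \mathbf{C}_V, B\sA)$ for a nonzero $V$, where the middle map is the approximation theorem applied under $B(-, \mathbf{C}_V, B\sA)$. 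That is, the group completion structure is not visible at strict level $0$; it comes from the adjoint structure map to a positive level, and this is precisely why the zeroth space of the resulting $\Omega$-$G$-spectrum ends up being the equivariant group completion of $B\sA$ rather than $B\sA$ itself. Finally, since the paper takes $\mathscr{C}_G = B\sO_G$, the $\mathbf{C}_V$-algebra structure on $B\sA$ is a strict pullback along the projection $\mathscr{C}_V \to \mathscr{C}_G$; no operad comparison equivalence is needed at that step.

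Also note a typographical inconsistency in the theorem as quoted: the hypothesis names the category $\C$ but the conclusion refers to $\sA$; these are meant to be the same object.
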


The essential formal properties of the machine, which we will need, are the following theorems from \cite{GM3}.

\begin{thm}[\cite{GM3}]\mylabel{prop1} Let $\sA$ and $\sB$ be $\sO_G$-categories. Then the map 
\[ \bK_G(\sA\times \sB) \rtarr \bK_G \sA \times \bK_G\sB \]
induced by the projections is a weak equivalence of $G$-spectra.
\end{thm}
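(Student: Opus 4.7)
The approach is to reduce the claim to the level of zeroth $G$-spaces and then to identify both sides there as equivariant group completions of the same $G$-space, from which the desired weak equivalence follows formally.

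Since the product of $\OM$-$G$-spectra is again an $\OM$-$G$-spectrum (the structure maps of the product are the product of those of the factors), both $\bK_G(\sA\times\sB)$ and $\bK_G\sA\times\bK_G\sB$ are $\OM$-$G$-spectra. Between $\OM$-$G$-spectra, a weak $G$-equivalence at any single level forces one at every level, so it suffices to treat the zeroth $G$-spaces. Because $B\colon \cat\rtarr \Top$ preserves products, $B(\sA\times\sB)\cong B\sA\times B\sB$ as $G$-spaces, and we obtain a commutative square
\[
\xymatrix{
B(\sA\times\sB) \ar[d] \ar[r]^-\cong & B\sA\times B\sB \ar[d] \\
\bK_G(\sA\times\sB)(0) \ar[r] & \bK_G\sA(0)\times \bK_G\sB(0)
}
\]
in which the left vertical arrow is an equivariant group completion by the fundamental property of the machine $\bK_G$; commutativity follows from naturality of the group completion map applied to the two projection functors out of $\sA\times \sB$.

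The argument now comes down to two assertions. First, the right vertical arrow is also an equivariant group completion, because a finite product of equivariant group completions is a group completion of the product. By definition, this can be checked on $H$-fixed points for each $H\subseteq G$, where fixed points commute with products; one then invokes the classical fact that a finite product of group completions of homotopy commutative Hopf spaces is a group completion of the product---the product of grouplike spaces is grouplike, and K\"unneth reduces the homology-localization condition to each factor. Second, any map between two equivariant group completions of the same $G$-space that respects the group completion maps is a weak $G$-equivalence: on each $H$-fixed point it is a map of grouplike Hopf spaces that is an isomorphism on $\pi_0$ (both sides giving the Grothendieck group of $\pi_0((B\sA\times B\sB)^H)$) and, by the universal property of localization, an isomorphism on $H_*(-;\bZ)$.

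\textbf{Main obstacle.} The only substantive external input is that the group completion map $B\sC\rtarr \bK_G\sC(0)$ of the machine is \emph{equivariant}, i.e.\ that each fixed point map $(B\sC)^H\rtarr \bK_G\sC(0)^H$ is itself a group completion for every $H\subseteq G$; this is a built-in formal property of $\bK_G$ as developed in \cite{GM3}. Granting this, the proof is entirely formal, and the same template will apply whenever one has an equivariant infinite loop space machine whose zeroth space is an equivariant group completion.
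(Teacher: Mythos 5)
This theorem is not proved in the paper: it is one of the ``essential formal properties of the machine'' that the paper simply cites from \cite{GM3} (see the sentence preceding the three theorems in \S 5.4.1). So there is no in-paper proof to compare against; I can only assess your proposal on its own terms. The overall strategy---reduce to the zeroth $G$-spaces via the group completion property, use $B(\sA\times\sB)\cong B\sA\times B\sB$, then invoke the group-completion recognition principle---is the natural argument, and presumably close to what is done in \cite{GM3}.

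There is, however, one step that is stated incorrectly and would fail as written: ``Between $\OM$-$G$-spectra, a weak $G$-equivalence at any single level forces one at every level.'' This is false. For an $\OM$-$G$-spectrum, a weak $G$-equivalence at level $V$ propagates down to all $W\subseteq V$ (by looping the level-$V$ equivalence and using the adjoint structure maps), but it does \emph{not} propagate up; in particular a level-$0$ weak $G$-equivalence of $\OM$-$G$-spectra does not in general force a weak equivalence of $G$-spectra, since the negative stable homotopy groups $\pi_n^H$ for $n<0$ are not visible at level $0$. What saves your argument is that $\bK_G$ produces \emph{connective} $\OM$-$G$-spectra: since $\pi_n^H=0$ for $n<0$ on both sides and $\pi_n^H(X)\cong\pi_n(X(0)^H)$ for $n\geq 0$, a weak $G$-equivalence on zeroth spaces does suffice. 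You should replace the false levelwise claim with this connectivity argument. A smaller point: the K\"unneth step, as you invoke it, is clean with field coefficients; for the group-completion criterion phrased over arbitrary commutative rings (as in this paper's definition) one should either restrict to field coefficients and deduce the rest, or note that the localization statement for all rings reduces to fields by standard arguments. This does not change the conclusion, but it is worth saying.
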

\begin{thm}[\cite{GM3}]\mylabel{prop2} For $\sO_G$-categories $\sA$, there is a natural weak equivalence of spectra
$$\bK(\sA^G) \rtarr (\bK_G \sA)^G.$$
\end{thm}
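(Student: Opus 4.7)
The plan is to commute $G$-fixed points past the two-sided bar construction defining $\bK_G\sA$ and identify the result with the nonequivariant bar construction defining $\bK(\sA^G)$. Three structural facts drive the argument. First, since $G$ is finite, geometric realization of simplicial $G$-spaces commutes with finite limits, and in particular with $(-)^G$, so
\[
\bK_G\sA(V)^G \;=\; B(\Sigma^V, {\bf C}_V, B\sA)^G \;\cong\; B\bigl((\Sigma^V)^G, ({\bf C}_V)^G, (B\sA)^G\bigr).
\]
Second, the classifying space functor commutes with fixed points (equation~\eqref{B fixed points}), so $(B\sA)^G = B(\sA^G)$. Third, we are evaluating the $G$-fixed spectrum, so we only look at $V$ sitting in $U^G$, i.e.\ $V$ with trivial $G$-action; for such $V$, $(\Sigma^V X)^G = \Sigma^V(X^G)$. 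Thus the entire task reduces to comparing the monad $({\bf C}_V)^G$ acting on $B(\sA^G)$ with the nonequivariant monad ${\bf C}_V$ (built from a nonequivariant $E_\infty$ operad times $\mathscr{K}_V$) used to construct $\bK(\sA^G)$.

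The hard part is therefore analyzing fixed points of the monad. Recall ${\bf C}_V X = \coprod_j \mathscr{C}_V(j)\times_{\Sigma_j} X^j$ where $\mathscr{C}_V = \mathscr{C}_G \times \mathscr{K}_V$. Because $V$ has trivial action, $\mathscr{K}_V$ is a nonequivariant Steiner operad, so $(\mathscr{C}_V(j))^G = (\mathscr{C}_G(j))^G \times \mathscr{K}_V(j)$. A $G$-fixed class $[c, x_1,\dots,x_j]$ is represented by pairs $(c,(x_i))$ for which there is a homomorphism $\rho\colon G\to \Sigma_j$ with $g\cdot c = c\cdot \rho(g)$ and $g\cdot x_i = x_{\rho(g)^{-1}(i)}$, so a priori $({\bf C}_V X)^G$ decomposes as a sum over conjugacy classes of such $\rho$, not just over trivial $\rho$. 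The plan is to show that the summand corresponding to $\rho$ is, up to equivalence, governed by the subspace $\mathscr{C}_V(j)^{G,\rho}$ of $\rho$-twisted fixed points; the key point is that for the $E_\infty$-$G$-operad $\mathscr{C}_G$, the spaces $(\mathscr{C}_G(j))^{G,\rho}$ are either empty or contractible (the ``$E\Sigma_j$-property'' that defines an $E_\infty$-$G$-operad), and when $X = B(\sA^G)$ lies in the $G$-trivial part of $B\sA$, only the trivial $\rho$ contributes non-trivially. This identifies $({\bf C}_V)^G(X^G)$ with ${\bf C}_V^{\mathrm{naive}}(X^G)$, where ${\bf C}_V^{\mathrm{naive}}$ is the nonequivariant monad built from $(\mathscr{C}_G)^G\times \mathscr{K}_V$, an honest $E_\infty$ operad in spaces.

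Assembling the pieces: by the discussion above we obtain a natural map
\[
\bK(\sA^G)(V) \;=\; B(\Sigma^V, {\bf C}_V^{\mathrm{naive}}, B(\sA^G)) \;\longrightarrow\; B(\Sigma^V, ({\bf C}_V)^G, (B\sA)^G) \;\cong\; \bK_G\sA(V)^G,
\]
induced levelwise by the inclusion $(\mathscr{C}_G)^G \hookrightarrow \mathscr{C}_G$ on simplicial degree zero. This is a degreewise weak equivalence because the Steiner factor is unchanged and the operadic fixed-point comparison is an equivalence, and because bar constructions preserve levelwise equivalences between Reedy-cofibrant simplicial objects. Naturality in $\sA$ is clear because the bar construction and all the fixed-point identifications are functorial in $\sO_G$-maps.

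The main obstacle is the monad fixed-point comparison in the middle paragraph: one must verify that the ``twisted'' summands coming from nontrivial $\rho\colon G\to \Sigma_j$ contribute trivially when the argument is a $G$-fixed space. This is precisely the content of the $E_\infty$-property of the operad $\mathscr{C}_G$ together with a careful bookkeeping of the $\Sigma_j$-quotient; the remaining formal manipulations (commuting $(-)^G$ with $B$, with $\Sigma^V$, and with geometric realization) are standard.
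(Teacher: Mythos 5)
The paper does not contain a proof of this theorem; it is cited as a formal property of the machine from \cite{GM3}. The argument there is much more economical than yours: both $\bK(\sA^G)$ and $(\bK_G\sA)^G$ are connective $\OM$-spectra, and the group completion property of $\bK_G$ identifies both zeroth spaces with the group completion of $B(\sA^G)=(B\sA)^G$, since the $G$-equivariant group completion map $B\sA\to\bK_G\sA(0)$ restricts on $G$-fixed points to the nonequivariant one. A natural map of connective $\OM$-spectra that is an equivalence on zeroth spaces is a stable equivalence, and one never has to compute the fixed points of the monad ${\bf C}_V$ at all.

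Your degreewise comparison of bar constructions fails at the crucial step, namely the assertion that ``only the trivial $\rho$ contributes non-trivially.'' In computing $\bK_G\sA(V)^G$, the monad is applied to the full $G$-space $B\sA$ \emph{before} fixed points are taken; you cannot substitute $(B\sA)^G$ into the monad first. Consequently $({\bf C}_V B\sA)^G$ genuinely contains one summand for each conjugacy class of $\rho\colon G\to\Sigma_j$. The $E_\infty$-$G$ property does make the relevant twisted fixed-point space of $\mathscr{C}_G(j)$ contractible, but this does \emph{not} kill the summand: it identifies it, up to equivalence, with a homotopy quotient of the $\rho$-twisted fixed points of $(B\sA)^j$, which unwinds to products of spaces $(B\sA)^H$ for various proper subgroups $H\le G$ and is generically nontrivial. (For $G=C_2$, $j=2$, and $\rho$ the nontrivial homomorphism to $\Sigma_2$, this summand is essentially the homotopy $C_2$-orbit space of $B\sA$, which has no reason to vanish.) So the map $\Sigma^V{\bf C}^{\mathrm{naive}}_V B(\sA^G)\to\Sigma^V({\bf C}_V B\sA)^G$ is not a weak equivalence already in simplicial degree one, the identification $B(\Sigma^V,({\bf C}_V)^G,(B\sA)^G)\cong\bK_G\sA(V)^G$ you assert does not hold, and the Reedy-cofibrancy argument has nothing to preserve. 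These ``extra'' summands are exactly the transfer contributions underlying the tom Dieck splitting; reconciling them with what $\sA^G$ already records is the actual content of the theorem, and that reconciliation is visible only after realization and group completion --- which is precisely the input the $\OM$-spectrum argument supplies directly.
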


The inclusion $\io\colon \sO\rtarr \sO_G$ induces a forgetful functor 
$\io^*$ from genuine to naive permutative $G$-categories. Also, we have a forgetful functor $i^\ast$ from genuine to naive $G$-spectra.

\begin{thm}[\cite{GM3}]\mylabel{prop3} For $\sO_G$-categories $\sA$, there is a natural
weak equivalence of naive $G$-spectra $\bK \io^*\sA \rtarr i^*\bK_G\sA$.\end{thm}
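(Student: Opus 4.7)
The plan is to construct the natural map from compatible operadic data and then verify that it is a levelwise nonequivariant weak equivalence, which suffices because both spectra are naive $\OM$-spectra. First, the equivariant projection $\tG \rtarr \ast$ produces the operad inclusion $\io\colon \sO \rtarr \sO_G$ in $G\cat$ (applying \eqref{iota} levelwise to each $\widetilde{\SI}_j$), where $\sO$ carries the trivial $G$-action, and taking classifying spaces yields a $G$-map $B\sO(j) \rtarr B\sO_G(j)$. Since $\R^n$ carries the trivial $G$-action, the Steiner operad $\mathscr{K}_{\R^n}$ is just the nonequivariant $\mathscr{K}_n$ equipped with trivial $G$-action, so there is a tautological operad inclusion $\mathscr{K}_n \hookrightarrow \mathscr{K}_{\R^n}$. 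Together these give a map of $E_\infty$-operads $\mathscr{C}_n \rtarr \mathscr{C}_{\R^n}$ in $G\Top$, and hence a natural transformation of monads ${\bf C}_n \rtarr {\bf C}_{\R^n}$ on based $G$-spaces. For any $\sO_G$-algebra $\sA$, the underlying naive permutative $G$-category $\io^*\sA$ inherits a ${\bf C}_n$-algebra structure by restriction, so we obtain compatible maps of two-sided bar constructions
\[
B(\Sigma^n, {\bf C}_n, B\io^*\sA) \rtarr B(\Sigma^n, {\bf C}_{\R^n}, B\sA)
\]
for each $n \in \N$, assembling into a natural map of naive $G$-spectra $\bK\io^*\sA \rtarr i^*\bK_G\sA$.

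Next I would show this map is a levelwise nonequivariant weak equivalence. By \myref{EG}, $B\tG \simeq EG$ is nonequivariantly contractible, so the categorical map $\widetilde{\SI}_j \rtarr \cat(\tG, \widetilde{\SI}_j)$ from \eqref{iota} is a nonequivariant equivalence, and hence so is $B\sO(j) \rtarr B\sO_G(j)$. The Steiner operad inclusion is an isomorphism on underlying spaces. Therefore the monad map ${\bf C}_n \rtarr {\bf C}_{\R^n}$ is a nonequivariant equivalence of $E_\infty$-monads. By the standard homotopical invariance of the two-sided monadic bar construction under equivalences of $E_\infty$-monads acting on sufficiently cofibrant inputs, the induced map on bar constructions is a nonequivariant equivalence at each spectrum level, which together with the $\OM$-spectrum structure preserved by $i^*$ yields a weak equivalence of naive $G$-spectra.

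The main obstacle is making the homotopical invariance step fully rigorous, since the two monads act on $B\sA$ through genuinely different structure maps and one must arrange cofibrancy of $B\sA$ together with CW-ness of the operad components for the bar construction to behave well under the comparison. A cleaner route, essentially that of \cite{GM3}, is to exploit the naturality of the group completion theorem: both $\bK\io^*\sA$ and $i^*\bK_G\sA$ are underlying naive $\OM$-spectra whose zeroth space is a nonequivariant group completion of $B\sA$, and our natural map is compatible with the respective group completion maps out of $B\sA$. Uniqueness of infinite deloopings of a group-complete $E_\infty$-space then propagates the zero-level equivalence to all levels, reducing the verification to a direct comparison of two bar construction models for the group completion of $B\sA$, which is routine.
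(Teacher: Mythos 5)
The paper states this result as a citation to \cite{GM3} and provides no proof of its own, so there is no internal argument to compare against; I evaluate the proposal on its own merits. Your construction of the natural map is sound and surely the map GM3 has in mind: pull back the $\sO_G$-algebra structure along $\io\colon\sO\to\sO_G$ (with $\sO$ carrying the trivial action), use the identity on Steiner operads, and obtain a map of monads ${\bf C}_n\to{\bf C}_{\R^n}$ inducing a map of two-sided bar constructions. The genuine gap is in the verification. You establish only that the map is a levelwise \emph{nonequivariant} equivalence and assert this ``suffices because both spectra are naive $\OM$-spectra.'' But a weak equivalence of naive $G$-spectra is a map inducing isomorphisms on $\pi_*^H$ for every $H\subseteq G$, and a nonequivariant equivalence of $G$-spaces need not restrict to equivalences on $H$-fixed points. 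Your ``cleaner route'' suffers from the same defect: that the zeroth space is a nonequivariant group completion of $B\sA$ underdetermines the naive $G$-homotopy type, and the uniqueness-of-deloopings argument you invoke is, as stated, a nonequivariant one.

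The strengthening you need is within reach using the paper's own toolkit. The map $\io\colon\widetilde{\SI}_j\to\cat(\tG,\widetilde{\SI}_j)$ is in fact a weak \emph{$G$-equivalence}, not merely nonequivariant: $\widetilde{\SI}_j$ has trivial $G$-action, so by \myref{hfixedpoints} and \myref{trivial action} one has $\cat(\tG,\widetilde{\SI}_j)^H\simeq\cat(H,\widetilde{\SI}_j)$, and since $\widetilde{\SI}_j$ is chaotic every functor from the one-object category $H$ to it is constant, giving $\cat(H,\widetilde{\SI}_j)\cong\widetilde{\SI}_j$; one checks $\io^H$ realizes this for every $H$. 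Hence $\mathscr{C}_n\to\mathscr{C}_{\R^n}$ is a levelwise $G$-equivalence of $\SI_j$-free operads, the monad comparison is a $G$-equivalence on good inputs, and the bar construction map is a levelwise $G$-equivalence --- which does give a weak equivalence of naive $G$-spectra. Alternatively, and probably closer to GM3's own argument, one can argue on fixed points directly: since $\Sigma^n$ and ${\bf C}_n$ are built from data with trivial $G$-action and therefore commute with $H$-fixed points, $(\bK\io^*\sA)^H(n)=B(\Sigma^n,{\bf C}_n,(B\sA)^H)=\bK(\sA^H)(n)$, and one then compares against $(\bK_G\sA)^H\simeq\bK(\sA^H)$ from \myref{prop2}.
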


By  \myref{skeleton equiv}, since $\sF(F)=\sP(F)$, the $K$-theory space of a field $F$ with $G$-action is the equivariant group completion of $B\cat(\tG, \sGL(F))$, and as we have seen in the example above, $\cat(\tG, \mathscr{GL}(F))$ is a genuine permutative $G$-category. Therefore, we can define $$\KK_G(F)=\bK_G(\cat(\tG,\mathscr{GL}(F))).$$

Nonequivariantly, it is well known that using a construction of MacLane from \cite{maclane-assoc}, any symmetric monoidal category $\C$ can be strictified to an equivalent permutative category $\C^{str}$, and therefore we can apply the nonequivariant infinite loop space machine $\bK$ to a symmetric monoidal category by implicitly doing this replacement first. The category $\C^{str}$ has objects given by strings $(c_1, \dots, c_n)$ of objects in $\C$, and morphisms $$(c_1,\dots, c_n)\ra (d_1, \dots, d_m)$$ given by morphisms $$c_1\oplus \dots \oplus c_n\ra d_1\oplus\dots \oplus d_m$$ in $\C$. The symmetric monoidal structure is given by concatenation and the identity is given by the empty string $()$. 

 This carries through equivariantly:  if $\C$ is a symmetric monoidal $G$-category, then $\C^{str}$ is naturally also a symmetric monoidal $G$-category, with $G$-action given on objects by $g(c_1, \dots, c_n)=(gc_1, \dots, gc_n)$. Since $G$ commutes with $\oplus$, 
we can define the action on morphisms by $$g\big((c_1,\dots, c_n)\xra{f} (d_1, \dots, d_m)\big)= (gc_1, \dots, gc_n) \xra{gf} (gd_1, \dots, gd_m).$$
 
\noindent It is not hard to see the inverse functors in the  equivalence $C\simeq C^{str}$ are $G$-equivariant. Therefore, given a symmetric monoidal $G$-category $\C$, the naive permutative $G$-category $\C^{str}$ is $G$-equivalent to it.  The upshot is that using this strictification implicitly, we can use the operadic machine on symmetric monoidal $G$-categories. 

\begin{rmk} Recall that when $\C$ is a symmetric monoidal category with $G$ that only preserves the symmetric monoidal structure up to coherent isomorphism, we have shown in Section 5.1., that the symmetric monoidal structure on $\cat(\tG, \C)$ is preserved by the $G$-action on the nose. So we by the the above construction the symmetric monoidal $G$-category $\cat(\tG, \C)$ is monoidally $G$-equivalent to the naive permutative category $\cat(\tG, \C)^{str}$. By using this equivalence and \myref{idem}, we get that $\cat(\tG, \sC)$ is $G$-equivalent to the genuine permutative category $\cat(\tG, \cat(\tG, \C)^{str})$. \end{rmk}

We give the following definition for all $G$-rings.
 
 \begin{definition}\mylabel{spectrum def}
 We define the equivariant algebraic $K$-theory spectrum of $R$ as
$$\KK_G(R)= \bK_G\big( \cat(\tG, \iso\sP(R))\big),$$ with the understanding that we have replaced the input symmetric monoidal $G$-category with an equivalent genuine permutative $G$-category. \end{definition}

We note that since the zeroth space $\KK_G(R)(0)$ is the group completion of $B \cat(\tG, \iso\sP(R))$, the spectrum  $\KK_G(R)$ does indeed give deloopings of the $K$-theory space $K_G(R)$ we had already defined. Alternatively, we can use the equivariant Segal machine for delooping this space, which we address in the next section.

\subsubsection{Equivariant Segal infinite loop space machine} 
Segal developed an alternative delooping machine to the operadic May machine in the celebrated paper \cite{segal}, which we will denote as $\bS$. The input is a $\GA$-space, which is just a functor $$X\colon \sF\ra \Top, \ \ {\bf n} \mapsto X_n,$$ where $\sF$ is a skeleton of the category of based finite sets\footnote{The opposite of Segal's original category $\GA$ turns out to be just $\sF$.}. A $\GA$-space is \emph{special} if the map $\de\colon X_n\rtarr X_1^n$, induced by the projections $\de_i\colon \mathbf{n}\rtarr \mathbf{1}$, is an equivalence. From a $\GA$-space, Segal produces a spectrum, and he shows that for a special $\GA$-space, the spectrum is $\OM$, with zeroth space the group completion of $X_1$. 

One can start with a $\GA$-category instead, i.e., a functor $\sF\ra \cat$ and define it to be special if the $\GA$-space obtained by applying the classifying space functor levelwise is a special $\GA$-space. Segal gives a construction of a special $\GA$-category $X$ from a symmetric monoidal category $\C$, with $ X_1\simeq \C$. Therefore, $\bS(\C)$, the spectrum obtained from the special $\GA$-space associated to the symmetric monoidal category $\C$, is $\OM$, with zeroth space the group completion of $B\C$.

Shimakawa has generalized Segal's machine in \cite{Shimakawa} to produce an orthogonal genuine $\OM$-$G$-spectrum starting from a special $\GA_G$-space. A $\GA_G$-space is a functor $$X\colon \sF_G\ra \Top_G,\ \  A\mapsto X(A),$$ where $\sF_G$ is the category of finite $G$-sets and $\Top_G$ is the category of $G$-spaces and nonequivariant based maps; $G$ acts by conjugation on morphism sets. For any $A\in \sF_G$, we have a projection $\de_a\colon A\ra {\bf 1}$, which sends all the nonbasepoint elements of $A$ to $1$ and the basepoint to $0$. A $\GA_G$-space is special if the map $\de_A\colon X(A)\ra \Map(A, X_1)$ induced by these projections is a $G$-equivalence for all $A\in \sF_G$. We note that this map turns out to be a $G$-map even though the individual maps $\de_a$ are generally not $G$-maps.

Given a $\GA_G$-space $X$, Shimakawa constructs a spectrum $\bS_GX$ with $V$th space given by the two-sided bar construction $B((S^V)^\bullet, \sF_G, X)$, where $(S^V)^\bullet$ is the contravariant functor $\sF_G\ra \Top_G$ defined on objects as $A\mapsto \Map(A, S^V)$. It is not hard to see that there are structure maps 
$$S^W \wedge B((S^V)^\bullet, \sF_G, X)\ra B((S^{V\oplus W})^\bullet, \sF_G, X).$$

The following is the main theorem in \cite{Shimakawa}.

\begin{theorem}[\cite{Shimakawa}]
For a special $\GA_G$-space $X$, the spectrum  $\bS_GX$ is a genuine $\OM$-$G$-spectrum, for which $X_1\simeq (\bS_GX)(0)$ if and only if $X_1$ is grouplike. \end{theorem}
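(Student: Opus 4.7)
The plan is to establish both parts by reducing to the nonequivariant Segal theorem through fixed points. A $G$-spectrum $Y$ is a genuine $\OM$-$G$-spectrum precisely when, for every pair $V \subseteq W$ of finite-dimensional subrepresentations of the complete $G$-universe, the adjoint structure map $Y(V) \to \OM^{W-V} Y(W)$ is a weak $G$-equivalence, i.e., a weak equivalence on every $H$-fixed-point space for $H \subseteq G$. So the core of the proof is to analyze $B((S^V)^\bullet, \sF_G, X)^H$ and verify this condition levelwise.

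First I would use that geometric realization and the two-sided bar construction commute with fixed points, so computing $(\bS_GX)(V)^H$ reduces to computing the $H$-fixed points of each simplex $\Map(A, S^V) \wedge X(A)$ and then realizing. For a finite $G$-set $A$ with orbit decomposition $A \cong \bigvee_\alpha G/H_\alpha$ as pointed $G$-sets, the standard adjunctions give
\[
\Map(A, S^V)^H \cong \prod_\alpha \Map_H(G/H_\alpha, S^V) \cong \prod_\alpha (S^V)^{H_\alpha \cap H},
\]
after reindexing by $H$-orbits of $A$. Combined with the specialness hypothesis, which forces $X(A)^H$ to split as a product indexed by $H$-orbits of $A$ of fixed-point values of $X$ at the orbit stabilizers, this lets one repackage the $H$-fixed-point bar construction as a classical Segal spectrum associated to a \emph{nonequivariant} special $\GA$-space, whose values at $\mathbf{n}$ are built from $X(G/K)^H$ for $K \subseteq H$.

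The main obstacle will be verifying that these identifications are natural in the inclusion $V \subseteq W$, so that the nonequivariant structure maps of the associated Segal spectra genuinely match the $H$-fixed-point structure maps of $\bS_GX$; this is the technical heart of Shimakawa's argument. Once it is in place, Segal's original theorem, applied fixed-point system by fixed-point system, yields that $\bS_GX$ is an $\OM$-$G$-spectrum in positive levels, since any classical Segal spectrum associated to a special $\GA$-space is $\OM$ above level zero.

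Finally, for the second assertion, taking $H$-fixed points of the bottom adjoint structure map produces, under the identification above, the classical group completion map $X_1^H \to \OM B(X_1^H)$ of the homotopy commutative monoid $X_1^H$. By the McDuff--Segal group completion theorem this map is a weak equivalence if and only if $\pi_0(X_1^H)$ is already a group, i.e., $X_1^H$ is grouplike. Imposing this for every subgroup $H \subseteq G$ is precisely the condition that $X_1$ be grouplike as a $G$-Hopf space, which delivers the ``if and only if'' and completes the proof.
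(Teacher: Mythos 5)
The paper does not contain a proof of this statement; it is quoted from Shimakawa's paper, with the paper pointedly adding afterwards that Shimakawa does \emph{not} construct the group completion map $X_1 \to (\bS_GX)(0)$ and that this gap is filled in \cite{MMO}. So there is no in-paper proof to compare against, and the statement is in fact more delicate than a clean citation would suggest.

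As for your sketch: the reduction-to-fixed-points strategy is the natural first idea, but the place you flag as ``the technical heart'' is exactly where it breaks down, and I do not think the argument as outlined can be pushed through. Taking $H$-fixed points of the bar construction $B((S^V)^\bullet, \sF_G, X)$ does \emph{not} produce a classical Segal bar construction over $\sF$. The indexing category $\sF_G$ has finite $G$-sets as objects and \emph{nonequivariant} based maps as morphisms, with $G$ acting by conjugation on mapping sets; its $H$-fixed subcategory therefore consists of all finite $G$-sets together with $H$-equivariant maps, which is a far larger category than $\sF$ and still carries residual equivariant information. This is the entire point of indexing over $\sF_G$: it is what buys the $V$-deloopings for nontrivial representations, and that information does not evaporate after passage to fixed points. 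Your identification
$\Map(A,S^V)^H \cong \prod_\alpha (S^V)^{H_\alpha\cap H}$ is also not right as written: the $H$-orbits of $G/H_\alpha$ are indexed by double cosets $H\backslash G/H_\alpha$ with stabilizers $H\cap gH_\alpha g^{-1}$, so the product must run over those double cosets, and the resulting indexing does not collapse to ordinary finite sets. One cannot invoke Segal's nonequivariant theorem ``fixed-point system by fixed-point system'' without first solving a genuinely equivariant comparison problem, which is the actual content of Shimakawa's argument.

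For the second assertion, your application of the McDuff--Segal group completion theorem presupposes the existence of a map $X_1\to (\bS_GX)(0)$ whose $H$-fixed points are the classical group completion maps. But producing such a comparison map is exactly what the paper says is missing from Shimakawa's proof. Without first constructing that map and identifying its fixed points, the ``if and only if'' cannot be run as you describe; this is a substantive gap, not a bookkeeping step.
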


Essential to our applications is that in general there is a group completion map  $X_1\ra (\bS_GX)(0)$, which Shimakawa does not prove, but we fill this gap in \cite{MMO}.

A $\GA_G$-category is a functor $\sF_G\to \cat_G$, where $\cat_G$ is the category of $G$-categories and nonequivariant functors. It is special if the $\GA_G$-space obtained by applying the classifying space functor levelwise is special. Shimakawa generalizes Segal's combinatorial way of constructing a $\GA$-category from a symmetric monoidal category to construct a $\GA_G$-category from a symmetric monoidal $G$-category $\C$. This $\GA_G$-category is not necessarily special, but Shimakawa shows that replacing $\C$ by the symmetric monoidal $G$-category $\cat(\tG, \C)$ does yield a special $\GA_G$-category, and therefore, $\bS_G(\cat(\tG, \C))$, the machine applied to the special $\GA_G$-category obtained from the symmetric monoidal $G$-category $\cat(\tG, \C)$, is a genuine orthogonal $\OM$-$G$-spectrum with zeroth space the group completion of $B\cat(\tG, \C)$.

In  \cite{MMO}, we prove that the two equivariant delooping machines agree; in particular, we have the following result.
\begin{theorem}\cite{MMO}
For a symmetric monoidal $G$-category $\C$ we have an equivalence of orthogonal $G$-spectra $\bK_G(\cat(\tG, \C))\simeq \bS_G(\cat(\tG, \C))$.
\end{theorem}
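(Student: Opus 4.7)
The plan is to establish a natural zigzag of weak equivalences of orthogonal $G$-spectra between $\bK_G(\cat(\tG,\C))$ and $\bS_G(\cat(\tG,\C))$, following the spirit of the nonequivariant May--Thomason uniqueness theorem, adapted carefully to the genuine equivariant setting. First I would identify a common intermediate construction: an equivariant \emph{category of operators} $\sD(\sA)$ over $\sF_G$ associated to any $\sO_G$-algebra $\sA$, designed so that (i) its underlying $\GA_G$-category recovers the Shimakawa input fed to $\bS_G$, and (ii) it carries a compatible operadic action of the kind used to build $\bK_G$. This is the equivariant analogue of the May--Thomason category of operators, and both machines should admit natural spectrum-level comparison maps to (or from) a bar-type construction on $\sD(\sA)$.

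Next I would produce the two comparison maps of $G$-spectra: one from $\bK_G \sA$ and one from $\bS_G$ applied to the Shimakawa $\GA_G$-space of $\sA$, both targeting a common spectrum built from $\sD(\sA)$. The core claim is that for the input $\sA = \cat(\tG,\C)$ both comparison maps are weak equivalences of genuine $G$-spectra. To verify this, I would use \myref{prop2} on the May side together with an analogous fixed point decomposition for Shimakawa's machine to reduce, for every subgroup $H\subseteq G$, to the classical nonequivariant May--Thomason equivalence applied to $\C^{hH}$. Since both constructions are genuine $\OM$-$G$-spectra with the group completion property on zeroth spaces, a $\pi_*^H$-equivalence for all $H$ suffices to conclude an equivalence of orthogonal $G$-spectra via the equivariant Whitehead theorem.

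The main obstacle is handling the mismatch between operadic and Segalic structures in the equivariant setting. Nonequivariantly every permutative category gives rise to a special $\GA$-category, but equivariantly the naive analogue produces a $\GA_G$-category that is \emph{not} special: specialness requires precisely the replacement of $\C$ by $\cat(\tG,\C)$. Consequently, the structural maps one would use to compare the two machines fail to be $G$-equivalences before this replacement, and checking that they become genuine $G$-equivalences after it is the technical heart of the argument. The idempotency property \myref{idem} of $\cat(\tG,-)$ is what makes this feasible: once $\cat(\tG,\C)$ has been fed in, any further application of $\cat(\tG,-)$ arising during the comparison does no damage, so all the relevant bar constructions and comparison maps remain in the genuine world where both machines produce $\OM$-$G$-spectra. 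Packaging these observations into a formal zigzag, and checking naturality and compatibility of the chosen intermediate construction with both machine structures simultaneously, is where I expect essentially all of the work to lie.
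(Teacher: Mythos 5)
This theorem is cited from \cite{MMO} and is not proved in the present paper, so there is no in-paper proof to compare against; but the paper does say something pointed about how the proof goes, and it directly contradicts your plan. In the delooping section the authors write that ``by a celebrated theorem of May and Thomason, the nonequivariant infinite loop space machines are equivalent. Their proof does not generalize equivariantly, but we have shown in \cite{MMO} through a surprising chain of equivalences that the equivariant generalizations of the machines also produce equivalent $\OM$-$G$-spectra.'' Your proposal is precisely an attempt to generalize May--Thomason: you posit an equivariant category of operators $\sD(\sA)$ over $\sF_G$ admitting compatible operadic and Segalic structures, and comparison maps from both machines to a bar construction on it. That is the one strategy the authors explicitly say fails. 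The obstruction is not merely the specialness issue you flag (which, as you correctly note, $\cat(\tG,-)$ repairs); the deeper problem is that the equivariant approximation theorem and the interaction of the monadic bar construction with representation spheres do not factor through a common $\sF_G$-indexed category of operators in the way the nonequivariant argument factors through $\hat\Lambda$. The actual comparison in \cite{MMO} proceeds by a different, longer zigzag involving a generalized Segal machine on $\sF_G$-$G$-categories and several intermediate rigidifications, not by exhibiting a shared category of operators.

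There is also a gap in your fixed-point reduction step. You invoke \myref{prop2} for the operadic machine and then assert ``an analogous fixed point decomposition for Shimakawa's machine,'' but no such statement appears in the paper, and it is not automatic: the $G$-fixed points of $\bS_G X$ for a special $\GA_G$-space $X$ involve a sum over conjugacy classes of subgroups coming from the orbit decomposition of finite $G$-sets (a tom Dieck--type splitting), not simply $\bS(X^G)$. So even granting the existence of your intermediate $\sD(\sA)$, reducing both comparison maps to the nonequivariant May--Thomason theorem on $\C^{hH}$ would require a nontrivial identification on the Segal side that you have not supplied. The equivariant Whitehead step at the end is fine once you have genuine $G$-maps that are $\pi_*^H$-equivalences, but the construction of those $G$-maps is where the real difficulty lies, and your outline leaves it entirely unaddressed while following a blueprint the authors tell us breaks down.
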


\begin{cor}
There is an equivalence of orthogonal  $\OM$-$G$-spectra $$\KK_G(R)\simeq \bS_G\big(\cat(\tG, \iso\sP(R))\big).$$
\end{cor}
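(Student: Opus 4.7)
The plan is to deduce the corollary as a direct specialization of the preceding Theorem from \cite{MMO}, which asserts that for any symmetric monoidal $G$-category $\C$ there is an equivalence of orthogonal $G$-spectra
\[
\bK_G(\cat(\tG,\C))\simeq \bS_G(\cat(\tG,\C)).
\]
First I would set $\C=\iso\sP(R)$, which is a symmetric monoidal $G$-category with the $G$-action on $\sP(R)$ defined in Section~\ref{modules} (restricted to isomorphisms via \myref{iso}). By the cited theorem, this immediately yields an equivalence
\[
\bK_G\big(\cat(\tG,\iso\sP(R))\big)\simeq \bS_G\big(\cat(\tG,\iso\sP(R))\big).
\]

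The one small subtlety to dispatch is that \myref{spectrum def} defines $\KK_G(R)$ as $\bK_G\big(\cat(\tG,\iso\sP(R))\big)$ \emph{with the understanding that $\iso\sP(R)$ has been replaced by an equivalent naive permutative $G$-category} via the strictification $\C\mapsto \C^{\mathrm{str}}$ described in Section~\ref{delooping section}. So I would next observe that the canonical $G$-equivalence $\iso\sP(R)\simeq \iso\sP(R)^{\mathrm{str}}$ induces, by \myref{homotopy fixed point weak equiv}, a weak $G$-equivalence of $G$-categories
\[
\cat(\tG,\iso\sP(R))\;\longrightarrow\;\cat(\tG,\iso\sP(R)^{\mathrm{str}}),
\]
and both equivariant machines $\bK_G$ and $\bS_G$ take weak $G$-equivalences of input $G$-categories to equivalences of orthogonal $\OM$-$G$-spectra (this is a standard property recorded in \cite{GM3} for $\bK_G$, and in \cite{Shimakawa, MMO} for $\bS_G$, since both machines are built from bar constructions that preserve levelwise equivalences). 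Consequently, up to equivalence it is immaterial whether we feed the machines $\iso\sP(R)$ or its strictification, and the equivalence above may be read as an equivalence between $\KK_G(R)$ and $\bS_G\big(\cat(\tG,\iso\sP(R))\big)$.

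The main (and really only) obstacle lies entirely in the cited theorem from \cite{MMO}, which is a nontrivial comparison between the operadic and Segalic equivariant infinite loop space machines; the corollary itself is essentially a bookkeeping argument that unwinds \myref{spectrum def}, applies the theorem with $\C=\iso\sP(R)$, and uses homotopy invariance of $\cat(\tG,-)$ to absorb the strictification step.
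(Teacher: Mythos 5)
Your proof is correct and is essentially the paper's (unstated) argument: the corollary is an immediate specialization of the preceding theorem from \cite{MMO} with $\C=\iso\sP(R)$, and the paper leaves the proof implicit. Your additional remark about the strictification step is sound, though arguably redundant, since the paper already records that $\C\simeq \C^{\mathrm{str}}$ is a \emph{$G$-equivalence} (not merely a nonequivariant equivalence), so one does not even need \myref{homotopy fixed point weak equiv}; moreover the $\bK_G$ appearing in the cited theorem from \cite{MMO} is applied to $\cat(\tG,\C)$ with the same implicit strictification convention used in \myref{spectrum def}, so the two instances of $\bK_G$ already agree by definition.
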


\subsection{Functoriality of $\KK_G$}
Now we address functoriality of the construction. Even nonequivariantly, the assignment $R\mapsto \sP(R)$ is not a functor, but just a pseudo functor, since composition is not preserved strictly. One way to rectify any pseudo functor landing in $\cat$ to an actual functor is using Street's first construction from \cite{Street}. This generalizes equivariantly to strictify a pseudo functor landing in $G\cat$ to an actual functor. We will tacitly assume this strictification from now on, and address the new subtleties that arise and are specific to the equivariant case. The issue that arises is that for a $G$-map of $G$-rings $R\ra S$, the functor $\sP(R)\ra\sP(S)$ is \emph{not} equivariant, so it is not a morphism in $G\Cat$. So for sure the assignment $R\mapsto \sP(R)$ is not a functor or even a pseudo functor. We show that this gets rectified after applying $\cat(\tG, -)$.

\begin{theorem}
The assigment $R\mapsto \KK_G(R)$ is a functor from the category of $G$-rings and $G$-maps to  genuine orthogonal $G$-spectra. 
\end{theorem}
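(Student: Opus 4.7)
The plan is to show that for a $G$-equivariant map of $G$-rings $\phi \colon R \to S$, the extension of scalars functor $\phi_* = S \otimes_R (-) \colon \sP(R) \to \sP(S)$ is pseudo equivariant in the sense of \myref{weakequivariant}, and then invoke \myref{pseudo equiv} to produce a strictly equivariant functor $\tilde{\phi}_* \colon \cat(\tG, \iso\sP(R)) \to \cat(\tG, \iso\sP(S))$. Postcomposition with the functorial equivariant infinite loop space machine $\bK_G$ will then produce the desired functor $R \mapsto \KK_G(R)$ into genuine orthogonal $G$-spectra.

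First I would exhibit, for each $g \in G$ and each $M \in \sP(R)$, a natural $S$-linear isomorphism $\theta_g(M) \colon \phi_*(gM) \to g\phi_*(M)$. Both source and target are quotients of $S \otimes_\bZ M$ by distinct balancing relations, and the $G$-equivariance condition $\phi(r^g) = \phi(r)^g$ supplies exactly the intertwining needed to build a canonical $S$-linear isomorphism (accounting for the $g$-twisted $S$-action on the target). Naturality in $M$ is immediate because $g$ acts as the identity on underlying abelian group maps, $\theta_e = \id$ is clear, and the coherence cocycle of \myref{weakequivariant} reduces to the homomorphism property of the $G$-action on $S$ together with the fact that $G$ acts by ring automorphisms. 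Since $\phi_*$ preserves isomorphisms, this pseudo equivariance structure restricts to $\iso\sP(R) \to \iso\sP(S)$.

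Applying \myref{pseudo equiv} then delivers the strictly $G$-equivariant functor $\tilde{\phi}_*$. To upgrade the assignment $R \mapsto \cat(\tG, \iso\sP(R))$ to a strict functor in $G$-ring maps, I would first strictify the underlying nonequivariant pseudofunctor $R \mapsto \iso\sP(R)$ via Street's first construction \cite{Street}, as the excerpt already recalls. The 2-functoriality of $\cat(\tG,-)$ (Section 2.2), combined with the functoriality in the pseudo equivariant data of the rectification provided by \myref{pseudo equiv}, then yields $\widetilde{(\phi \circ \psi)}_* = \tilde{\phi}_* \circ \tilde{\psi}_*$ on the nose and $\widetilde{(\id_R)}_* = \id$. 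Functoriality of $\bK_G$ on strict $G$-maps of naive permutative $G$-categories, immediate from its monadic bar-construction definition, then completes the argument.

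The main obstacle is verifying coherence of the pseudo equivariance 2-cells under composition of $G$-ring maps, that is, showing that the system $\theta_g^{\phi \circ \psi}$ agrees with the horizontal composite of $\theta_g^{\phi}$ and $\theta_g^{\psi}$ through the associativity isomorphism $(\phi \circ \psi)_* \cong \phi_* \circ \psi_*$. This reduces to a diagram chase exploiting the $G$-equivariance of each ring map in the composite together with the standard compatibilities of iterated extension of scalars with ring automorphisms; all other ingredients in the construction are natural in the input and thereby transport composition and identities strictly through $\cat(\tG,-)$ and $\bK_G$.
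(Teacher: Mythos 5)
Your proposal follows essentially the same approach as the paper: exhibit the extension-of-scalars functor $\sP(R)\to\sP(S)$ as pseudo equivariant using the isomorphisms $gM\otimes_R S\cong g(M\otimes_R S)$ (defined by $m\otimes s\mapsto m\otimes s^g$, well-defined precisely because $\phi(r^g)=\phi(r)^g$), then rectify via \myref{pseudo equiv} and feed the result into the functorial machine $\bK_G$, with Street's first construction handling the nonequivariant pseudofunctoriality of $R\mapsto\sP(R)$. Your attention to the coherence of the 2-cells $\theta_g$ under composition of $G$-ring maps is a point the paper's proof leaves implicit, but it does not change the route.
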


\begin{proof}
The equivariant infinite loop space machine $\bK_G$ is a functor from the category of genuine permutative $G$-categories and $G$-maps between them to the category of genuine orthogonal $G$-spectra. Thus it suffices to show that having a map of $G$-rings $R\ra S$ yields an \emph{equivariant} map  $\cat(\tG, \sP(R))\ra \cat(\tG, \sP(S))$.

Suppose $f\colon R\ra S$ is a $G$-map of $G$-rings, and consider the functor $\sP(R)\ra \sP(S)$ defined as $M\mapsto  M\otimes_R S$. 
Note that certainly $gM\otimes_R S\neq g(M\otimes_R S)$ since the scalar multiplication is different in the two modules; however we go on to define an isomorphism $$gM\otimes_R S\cong g(M\otimes_R S).$$
 
Recall that in $gM$, the scalar multiplication is defined as $r\cdot_g m={}^gr m$, where ${}^g(-)$ denotes the action of $g$ on $R$. Define $$gM\otimes_R S\ra g(M\otimes_R S), \ \ \ m\otimes s\mapsto m\otimes {}^gs.$$ 
First of all, we use the assumption that $f$ is a $G$-map to show that this assignment is well-defined. Note that for $t\in R$, we have the following identification in $gM\ox_R S$:
$$m\otimes ts \sim f(t)\cdot_g m\ox s  = {}^g\!f(t) m \ox s,$$
Now $$m\otimes ts \mapsto m\ox {}^g(ts)= m\ox {}^gt\ \!{}^g\!s \sim f({}^gt) m\ox {}^gs,$$
and $${}^gf(t)m\ox s\mapsto {}^gf(t)m\ox {}^gs,$$ but these are equal since $f({}^gt)={}^gf(t).$

We check next that the assignment is $S$-linear: for $t\in S$, 
$$t(m\otimes s)=m\otimes ts \mapsto m\otimes {}^g (ts)=m \otimes {}^gt\ \!{}^g\!s=t\cdot_g(m\otimes {}^gs).$$

Similarly, we can check that the inverse map $$g(M\ox_R S)\ra gM\ox_R S, \ \ \ m\ox s\mapsto m\ox {}^{(g^{-1})}s$$ is well-defined and $S$-linear, so that we have the claimed isomorphism.

It is not hard to see that these isomorphisms make  the functor $-\otimes_R S$ pseudo equivariant, and  \myref{pseudo equiv} provides the desired $G$-map $$\cat(\tG, \sP(R))\ra \cat(\tG, \sP(S)),$$ which in turn gives a map of genuine $G$-spectra $$\KK_G(R)\ra \KK_G(S)$$ by the functoriality of the equivariant infinite loop space machine $\bK_G$. 
\end{proof}


\subsection{Properties of the equivariant algebraic $K$-theory spectrum $\KK_G(R)$}

Now we can exploit the results that we have proved in section 4 about the homotopy fixed points of module categories. An immediate consequence \myref{prop2} and \myref{equiv chain proj} is the following theorem, which says that we recover the classical nonequivariant $K$-theory of twisted group rings as the fixed points of our construction.

\begin{theorem}\mylabel{fixed point spectra}
If $H\subseteq G$ and $|H|^{-1}\in R$, there is an equivalence of spectra $$\KK_G(R)^H\simeq\KK(R_H[H]).$$
\end{theorem}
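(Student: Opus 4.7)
The plan is to unravel $\KK_G(R)^H$ using the definition, apply the fixed-point compatibility of the machine to reduce to the nonequivariant $K$-theory of the $H$-fixed subcategory, and then identify that subcategory with $\iso\sP(R_H[H])$ via the earlier analysis of homotopy fixed points of module categories.

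First I would use Definition \myref{spectrum def} to write $\KK_G(R)^H=\bK_G\bigl(\cat(\tG,\iso\sP(R))\bigr)^H$. Restricting the action from $G$ to $H$, and using Observation \myref{hfixedpoints} (i.e. that $\tG$ and $\tH$ are equivalent as $H$-categories since both are $H$-free contractible), I get an $H$-equivalence of genuine permutative $H$-categories $\cat(\tG,\iso\sP(R))\simeq \cat(\tH,\iso\sP(R))$. Thus it suffices to analyze $\bK_H\bigl(\cat(\tH,\iso\sP(R))\bigr)^H$. By Theorem \myref{prop2} applied with $H$ in place of $G$, there is a natural equivalence of spectra
$$\bK_H\bigl(\cat(\tH,\iso\sP(R))\bigr)^H \;\simeq\; \bK\Bigl(\cat(\tH,\iso\sP(R))^H\Bigr).$$

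Next I would use Lemma \myref{iso} to identify $\cat(\tH,\iso\sP(R))^H\cong \iso\bigl(\cat(\tH,\sP(R))^H\bigr)=\iso\bigl(\sP(R)^{hH}\bigr)$. Since $|H|^{-1}\in R$, Corollary \myref{equiv chain proj cor} supplies an equivalence of (symmetric monoidal) categories $\sP(R)^{hH}\simeq \sP(R_H[H])$; this restricts to an equivalence $\iso\sP(R)^{hH}\simeq \iso\sP(R_H[H])$. Feeding this through the nonequivariant machine $\bK$ (which is homotopy invariant for equivalences of symmetric monoidal categories) produces
$$\bK\bigl(\iso\sP(R)^{hH}\bigr)\;\simeq\;\bK\bigl(\iso\sP(R_H[H])\bigr)\;=\;\KK(R_H[H]),$$
and composing the chain of equivalences yields the desired identification.

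The main obstacle I anticipate is verifying the compatibility used in the first step: that $\bK_G$ applied to a $G$-category and then restricted to $H$-fixed points agrees with restricting the input along $H\subseteq G$ and applying $\bK_H$, and then taking $H$-fixed points. This is built into the construction of the machine, since the $V$-th space $B(\Sigma^V,\mathbf{C}_V,B\sA)$ carries a $G$-action whose underlying $H$-action is visibly the one coming from the restricted input, and the universes and Steiner operads involved restrict appropriately from $G$ to $H$. Once that compatibility and the identification of the restriction $\cat(\tG,-)\simeq \cat(\tH,-)$ on $H$-categories are accepted, the theorem assembles immediately from the results already proved in Sections 2--4 and the formal properties \myref{prop2} of the equivariant May machine.
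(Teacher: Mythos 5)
Your proof is correct and follows essentially the same route as the paper, which presents the theorem as an immediate consequence of \myref{prop2} and \myref{equiv chain proj}; you simply spell out the bookkeeping (restriction from $G$ to $H$ via \myref{hfixedpoints}, commuting $\iso$ with $\cat(\tG,-)$ via \myref{iso}, and using \myref{equiv chain proj cor} for general $H$). Your flagged concern about compatibility of the machine with restriction from $G$ to $H$ is a legitimate technical point the paper elides, and your sketched justification is the right one.
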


By \myref{morita proj}, we immediately get that the equivariant algebraic $K$-theory of $G$-rings is equivariantly Morita invariant.

\begin{proposition}
If $R$ and $S$ are equivariantly Morita equivalent, then there is a $G$-equivalence $$\KK_G(R)\simeq \KK_G(S).$$
\end{proposition}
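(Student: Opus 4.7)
The plan is to lift the categorical Morita equivalence through the explicit group-completion model $S^{-1}S$ used in Definition~\ref{bs def}. The key input is already supplied by Lemma~\ref{morita proj}, which tells us that equivariant Morita equivalence of $R$ and $S$ produces a weak $G$-equivalence
\[
\Psi\colon \cat(\tG,\sP(R))\longrightarrow \cat(\tG,\sP(S)).
\]
First I would restrict to isomorphisms. Passing to the subcategory of isomorphisms is a $G$-equivariant operation, and by Lemma~\ref{iso} it commutes with $\cat(\tG,-)$, so $\Psi$ restricts to a weak $G$-equivalence
\[
\Psi^{\iso}\colon \cat(\tG,\iso\sP(R))\longrightarrow \cat(\tG,\iso\sP(S)).
\]

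Next I would upgrade this to a map of symmetric monoidal $G$-categories compatible with the $S^{-1}S$ construction. Any Morita equivalence, being an equivalence of module categories, preserves the direct-sum biproduct up to canonical isomorphism, hence is symmetric monoidal. Combined with the strictification of the $G$-action on the monoidal structure explained just before Definition~\ref{bs def} (which is itself an application of Proposition~\ref{pseudo equiv}), this yields a strictly $G$-equivariant strong monoidal functor between the naive permutative $G$-categories $S=\cat(\tG,\iso\sP(R))$ and $T=\cat(\tG,\iso\sP(S))$. Since the $S^{-1}S$ construction is natural with respect to monoidal functors and commutes with fixed points (the isomorphism $(S^H)^{-1}(S^H)\cong (S^{-1}S)^H$ recorded just before Proposition~\ref{bs group completion}), applying it to $\Psi^{\iso}$ gives a symmetric monoidal $G$-functor
\[
(\Psi^{\iso})^{-1}(\Psi^{\iso})\colon S^{-1}S \longrightarrow T^{-1}T.
\]

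To see that this map is a weak $G$-equivalence, I would argue fixed-point-wise: on $H$-fixed points it is the map $(S^H)^{-1}(S^H)\to (T^H)^{-1}(T^H)$ induced by the equivalence of symmetric monoidal groupoids $\Psi^{\iso,H}\colon S^H\xrightarrow{\simeq} T^H$. Because $S^H$ and $T^H$ are symmetric monoidal groupoids with faithful translations (translations in $\iso\sP(-)$ are automorphisms of projective modules, hence injective, and this is inherited by homotopy fixed points), Proposition~\ref{bs group completion} identifies both $B(S^H)^{-1}(S^H)$ and $B(T^H)^{-1}(T^H)$ with nonequivariant group completions of $BS^H$ and $BT^H$ respectively. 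Since $\Psi^{\iso,H}$ is an equivalence of categories, it induces a homotopy equivalence of classifying spaces, and group completion sends this to an equivalence.

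Finally, applying the classifying space functor $B$, which commutes with fixed points by \eqref{B fixed points}, yields a $G$-map $K_G(R)=B(S^{-1}S)\to B(T^{-1}T)=K_G(S)$ whose fixed-point maps are weak equivalences, hence a $G$-equivalence. The only mildly delicate step is ensuring that the Morita functor, which is naturally only pseudo equivariant and only strong monoidal, can be fed into $S^{-1}S$; this is precisely what the $\cat(\tG,-)$ philosophy of Section~\ref{pseudo} is designed to handle, via Proposition~\ref{pseudo equiv} and the monoidal strictification preceding Definition~\ref{bs def}.
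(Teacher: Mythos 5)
Your proposal is correct and follows the same route the paper takes, only fully spelled out: the paper's own ``proof'' is the single sentence before the proposition, ``By Lemma~\ref{morita proj}, we immediately get\ldots,'' leaving the reader to feed the resulting weak $G$-equivalence of categories into the group-completion/$K$-theory machinery. You correctly identify Lemma~\ref{morita proj} as the key input, correctly observe that the additivity of the Morita functor makes it strong symmetric monoidal, and correctly use the explicit $S^{-1}S$ model from Definition~\ref{bs def} together with the fixed-point commutations (\eqref{B fixed points}, $(S^H)^{-1}(S^H)\cong(S^{-1}S)^H$) to conclude.

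One small point worth tightening: restricting a weak $G$-equivalence of $G$-categories to the isomorphism subcategories does not \emph{formally} preserve weak $G$-equivalence, so ``$\Psi$ restricts to a weak $G$-equivalence $\Psi^{\iso}$'' needs a beat of justification. The cleanest fix is to run the argument in the other order: the underlying Morita equivalence $\sP(R)\to\sP(S)$ is a pseudo equivariant equivalence of categories, so it restricts to a pseudo equivariant equivalence $\iso\sP(R)\to\iso\sP(S)$, and Proposition~\ref{pseudo weak equiv} (together with Lemma~\ref{iso}) then produces the weak $G$-equivalence $\cat(\tG,\iso\sP(R))\to\cat(\tG,\iso\sP(S))$ directly. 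Equivalently, Corollary~\ref{pseudo weak equiv cor} guarantees that the $H$-fixed-point maps are honest equivalences of categories, which do restrict to isomorphism subcategories. With that repair, the rest of your argument is exactly right, and it also implicitly relies on the (true but unstated in the paper) fact that the strictification $\tilde\Theta$ of Proposition~\ref{pseudo equiv} is compatible with the monoidal structure when $\Theta$ is strong monoidal---a compatibility the paper also takes for granted in the lead-up to Definition~\ref{bs def}.
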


 \section{Equivariant $K$-theory of Galois extensions}\label{galois}
 
 The algebraic $K$-theory of Galois extensions behaves particularly nicely as a result of faithfully flat descent and the fact that for $G$-Galois extensions the category of descent data has an interpretation in terms of modules with semilinear $G$-action.  
 
 \subsection{Galois extensions of rings}
Galois extensions of rings have been  introduced and first studied by Auslander and Golman in  \cite{galois}. For a ring extension $R\ra S$, let $\Aut_R(S)$ be the group of automorphisms of $S$ fixing $R$. We recall the definition.

\begin{definition}
 Let $R\ra S$ be a faithfully flat ring extension and suppose that $G$ is a finite subgroup of $\Aut_R(S)$. The extension $R\ra S$ is Galois with Galois group $G$ if the map
 \begin{equation}\mylabel{Galois condition}
 \ga \colon S\otimes_R S \ra \prod_G S, \ \ \  a\ox b\mapsto ((g\cdot a) b)_{g\in G}.
 \end{equation}
 is an $S$-algebra isomorphism.
 \end{definition}

It is an easy exercise to see that
  $R=S^G$. 
The wonderful fact about Galois extensions is that if $R\ra S$ is a Galois ring extension with Galois group $G$, then the category of $S$-modules with semilinear $G$-action is equivalent to  the category of $S$-modules with descent data, and in turn, by faithfully flat descent this is equivalent to the category of modules over $R$.

In the proof of \myref{equiv chain} we showed that for a $G$-ring $S$, the category of $S$-modules with semilinear $G$-action is equivalent to the homotopy fixed point category $\cat(\tG, \Mod(S))^G$. Thus we have the following proposition.

\begin{proposition}\mylabel{galois descent} 
 Suppose $R\ra S$ is a Galois ring extension with Galois group $G$. Then there is an equivalence of categories $$\Mod(R)\simeq \cat(\tG, \Mod(S))^G.$$
\end{proposition}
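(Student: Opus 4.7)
The plan is to combine the preceding proof of Proposition~\myref{equiv chain} with classical faithfully flat Galois descent. By the argument given there, the homotopy fixed point category $\cat(\tG, \Mod(S))^G$ is equivalent to the category $\Mod^G(S)$ of $S$-modules equipped with a semilinear $G$-action; so it suffices to exhibit an equivalence $\Mod(R)\simeq \Mod^G(S)$.

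I would realize this equivalence via explicit adjoint functors. Extension of scalars $\Phi\colon M\mapsto M\ox_R S$, with $G$ acting trivially on the first factor and through its given action on $S$ (automatically semilinear since $R=S^G$), has a right adjoint given by the fixed-points functor $\Psi\colon N\mapsto N^G$, which lands in $\Mod(R)$ because $R=S^G$. The unit is $\eta_M\colon M\to (M\ox_R S)^G$, $m\mapsto m\ox 1$, and the counit is $\varepsilon_N\colon N^G\ox_R S\to N$, $n\ox s\mapsto sn$; the task reduces to showing these are isomorphisms.

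The key tool is the Amitsur complex: since $R\to S$ is faithfully flat, for every $R$-module $M$ the sequence
\[ 0 \to M \xra{\eta_M} M\ox_R S \xra{d_0 - d_1} M\ox_R S\ox_R S \]
is exact, where $d_0(m\ox s) = m\ox 1\ox s$ and $d_1(m\ox s) = m\ox s\ox 1$. Applying $\id_M\ox \ga$, with $\ga$ the Galois isomorphism of (\myref{Galois condition}), identifies $M\ox_R S\ox_R S$ with $\prod_G(M\ox_R S)$, using finiteness of $G$. Under this identification $d_0$ becomes the diagonal $x\mapsto (x)_g$, while $d_1$ becomes the twist $x\mapsto (gx)_g$, so the equalizer of $d_0,d_1$ is exactly $(M\ox_R S)^G$, exhibiting $\eta_M$ as an isomorphism. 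The counit $\varepsilon_N$ follows by a parallel argument: upon tensoring with $S$ over $R$ both $(N^G\ox_R S)\ox_R S$ and $N\ox_R S$ become identified with products indexed by $G$ under which $\varepsilon_N\ox \id$ is the identity, and faithful flatness then promotes this to $\varepsilon_N$ itself. The main obstacle is the careful bookkeeping needed to verify that $\ga$ intertwines the Amitsur differentials with the diagonal and the $G$-twist maps, which is precisely where the Galois hypothesis (\myref{Galois condition}) is essentially used.
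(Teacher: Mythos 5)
Your proposal takes essentially the same approach as the paper: first reduce to the classical statement via Proposition~\myref{equiv chain} (identifying $\cat(\tG,\Mod(S))^G$ with $S$-modules carrying a semilinear $G$-action), then invoke faithfully flat Galois descent. The paper simply cites the classical equivalence between semilinear modules, descent data, and $R$-modules, whereas you spell out the standard proof via the adjunction $(-\ox_R S)\dashv (-)^G$, the Amitsur exact sequence, and the Galois isomorphism $\ga$ of~(\myref{Galois condition}); the content is the same, and your outline of the unit and counit arguments is correct.
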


Suppose $R\ra S$ is faithfully flat. Then an $R$-module $M$ is finitely generated  projective if and only if the $S$-module $M\ox_R S$ is finitely generated projective (see \cite[Prop. 2.12.]{fields}). Therefore, the equivalence of categories mentioned above restricts to an equivalence between the corresponding categories of finitely generated projective modules, and we obtain the following analogue of \myref{galois descent}.

\begin{proposition}\mylabel{proj galois descent}
 Suppose $R\ra S$ is a Galois ring extension with Galois group $G$. Then there is an equivalence of categories $$\sP(R)\simeq \cat(\tG, \sP(S))^G.$$
\end{proposition}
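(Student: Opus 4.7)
The plan is to restrict the equivalence established in Proposition~\myref{galois descent} to the respective full subcategories of finitely generated projective modules. Recall that the module-level equivalence sends $M \in \Mod(R)$ to $M \otimes_R S$ with its semilinear $G$-action (acting through $g$ on the second factor), and the quasi-inverse sends an $S$-module $N$ with semilinear $G$-action to its $R$-module of $G$-fixed points $N^G$. The key observation I need is that both of these functors preserve the property of being finitely generated and projective.

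First I would verify the forward direction: if $M \in \sP(R)$, then $M \otimes_R S$ is finitely generated projective over $S$, since extension of scalars along a ring map preserves both finite generation and projectivity. For the reverse direction, suppose $N$ is a finitely generated projective $S$-module carrying a semilinear $G$-action. By faithfully flat descent (which is exactly the content of the equivalence underlying Proposition~\myref{galois descent}), the counit map $N^G \otimes_R S \xrightarrow{\cong} N$ is an isomorphism of $S$-modules. Hence $N^G \otimes_R S \in \sP(S)$, and by the cited result \cite[Prop. 2.12]{fields} --- that under a faithfully flat extension $R \to S$ an $R$-module $M$ is finitely generated projective if and only if $M \otimes_R S$ is so --- we conclude $N^G \in \sP(R)$.

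Next I would identify the right-hand side with the correct subcategory of the equivalence in Proposition~\myref{galois descent}. Since $gP$ is projective whenever $P$ is (as $P \oplus Q \cong S^n$ implies $gP \oplus gQ \cong gS^n \cong S^n$), the $G$-action on $\Mod(S)$ restricts to $\sP(S)$, so $\cat(\tG, \sP(S))^G$ is well-defined as a full subcategory of $\cat(\tG, \Mod(S))^G$. Under the proof of \myref{equiv chain} applied to $\sP(S)$ in place of $\Mod(S)$, the category $\cat(\tG, \sP(S))^G$ is identified with the category of finitely generated projective $S$-modules equipped with semilinear $G$-action.

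Putting these pieces together yields the claimed equivalence: the two mutually inverse functors of Proposition~\myref{galois descent}, restricted to finitely generated projective modules on each side, give an equivalence $\sP(R) \simeq \cat(\tG, \sP(S))^G$. The only nontrivial ingredient is faithfully flat descent of projectivity, which has already been cited; everything else is bookkeeping that directly mirrors the module-level argument. I do not anticipate a main obstacle here, since the projective subcategory is precisely what faithfully flat descent is designed to handle.
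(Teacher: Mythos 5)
Your proof is correct and follows exactly the paper's approach: the paper also derives this by restricting the equivalence of Proposition \myref{galois descent} to the projective subcategories, using the same cited fact that under a faithfully flat extension $R\to S$ an $R$-module $M$ is finitely generated projective if and only if $M\otimes_R S$ is. You simply spell out the bookkeeping that the paper leaves implicit.
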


This leads to the following theorem, which says that for a $G$-Galois extension $R\ra S$, the $G$-fixed point spectrum of the $G$-equivariant $K$-theory of $S$ is the same as the nonequivariant $K$-theory spectrum of the fixed ring $S^G=R$.

\begin{theorem}\mylabel{galois result}
Let $R\ra S$ be a Galois extension of rings with Galois group $G$. Then there is an equivalence of orthogonal spectra $$\KK_G(S)^G\simeq \KK(R).$$
\end{theorem}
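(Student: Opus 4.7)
The plan is to reduce the theorem to a statement about fixed point categories, and then apply the Galois descent result from Proposition \myref{proj galois descent}. By Definition \myref{spectrum def}, $\KK_G(S) = \bK_G\bigl(\cat(\tG, \iso \sP(S))\bigr)$, with the symmetric monoidal $G$-category $\iso \sP(S)$ implicitly strictified to a naive permutative $G$-category. The key formal input is Theorem \myref{prop2}, which for any $\sO_G$-category $\sA$ provides a natural weak equivalence of spectra $\bK(\sA^G) \xrightarrow{\sim} (\bK_G \sA)^G$. Applied to $\sA = \cat(\tG, \iso \sP(S))$, this gives
\[
\KK_G(S)^G \;\simeq\; \bK\bigl(\cat(\tG, \iso \sP(S))^G\bigr).
\]

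The next step is to identify the permutative category $\cat(\tG, \iso\sP(S))^G$ with $\iso\sP(R)$. First, since $\iso$ commutes with both fixed points and the functor $\cat(\tG,-)$ (the latter by Lemma \myref{iso}), we have
\[
\cat(\tG, \iso \sP(S))^G \;\cong\; \iso\bigl(\cat(\tG, \sP(S))^G\bigr).
\]
Now Proposition \myref{proj galois descent} supplies an equivalence of categories $\sP(R) \simeq \cat(\tG, \sP(S))^G$, coming from the faithfully flat descent equivalence between $R$-modules and $S$-modules with semilinear $G$-action. Restricting to isomorphisms yields $\iso\sP(R) \simeq \iso\bigl(\cat(\tG, \sP(S))^G\bigr)$, and hence a chain of equivalences of categories ending at $\cat(\tG, \iso\sP(S))^G$.

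To conclude, one has to verify that this equivalence is one of symmetric monoidal (and hence, after strictification, permutative) categories, so that $\bK$ applied to both sides produces equivalent spectra. This is straightforward: the descent equivalence sends an $R$-module $M$ to $M \otimes_R S$ equipped with its canonical semilinear $G$-action, and the inverse sends an $S$-module with semilinear $G$-action to its $G$-fixed points. Both functors carry direct sums to direct sums up to natural isomorphism, so the equivalence refines to one of symmetric monoidal categories, and the induced equivalence on spectra gives $\bK\bigl(\cat(\tG, \iso\sP(S))^G\bigr) \simeq \bK(\iso\sP(R)) = \KK(R)$. Combining with the first display yields the desired equivalence $\KK_G(S)^G \simeq \KK(R)$.

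The main technical obstacle is verifying the symmetric monoidal compatibility in passing from the categorical equivalence of Proposition \myref{proj galois descent} to an equivalence on $K$-theory spectra; this amounts to checking that faithfully flat descent along $R \to S$ commutes with direct sums in a suitably coherent way, which is standard but needs to be spelled out so that the strictification implicit in Definition \myref{spectrum def} can be carried through on both sides.
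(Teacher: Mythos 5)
Your proof is correct and takes essentially the same approach as the paper: identify $\cat(\tG,\iso\sP(S))^G$ with $\iso\sP(R)$ via Proposition \myref{proj galois descent}, then commute $\bK_G$ with fixed points via Theorem \myref{prop2}. You are in fact more careful than the paper's own (quite terse) proof, which elides the $\iso$ bookkeeping via Lemma \myref{iso} and the symmetric monoidal compatibility of the descent equivalence — both of which you spell out.
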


\begin{proof}
By \myref{proj galois descent}, we have an equivalence of categories $$\sP(R)\simeq \cat(\tG, \sP(S))^G.$$ By \myref{prop2}, we have 
$$\bK_G(\cat(\tG, \sP(S)))^G \simeq \bK(\cat(\tG, \sP(S))^G).$$ Therefore,
$$\KK_G(S)^G\simeq \KK(R).$$
\end{proof}

\begin{example}\mylabel{galois example}
For any finite $G$-Galois extension of fields $E/F$,  $\KK_G(E)^G\simeq \KK(F).$ In particular, this recovers $\KK(\bQ)$ as the fixed point spectrum of the genuine equivariant $K$-theory spectrum of any finite Galois extension of $\bQ$.
\end{example}

\begin{example}
For any ring $R$, the diagonal map $R\ra R\times R$ is a Galois extension with group $\bZ/2\bZ$, where the nontrivial element acts on $R\times R$ by interchanging the factors. Thus, $$\KK_{\bZ/2\bZ}(R\times R)^{\bZ/2\bZ}\simeq \KK(R).$$
\end{example}

 \subsection{Strong form of Hilbert's theorem 90}\mylabel{hilbert 90}
 
  As an accidental corollary of our interpretations of homotopy fixed point categories of modules, we obtain a new proof of Serre's generalization of Hilbert's theorem 90. The original Hilbert 90 theorem states that for a Galois extension $E/F$ with Galois group $G$, the cohomology group $H^1(G, E^\times)$ is trivial. This theorem is reproved in \cite{sga} using faithfully flat descent. In the same spirit, we use \myref{galois descent} to give an alternative proof of the generalization of Hilbert's theorem 90, which is due to Serre:
 \begin{theorem}[Serre]\mylabel{serre}
 The nonabelian cohomology $H^1(G, GL_n(E))$ is trivial.
 \end{theorem}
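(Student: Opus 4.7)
The plan is to deduce Serre's theorem as a direct consequence of Galois descent, interpreted categorically via the homotopy fixed point machinery developed in Sections 2 and 4.

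First, I would reinterpret the nonabelian cohomology set as a set of isomorphism classes. By \myref{H1 trivial} (and the discussion surrounding the crossed-homomorphism description \myref{crossedhom}), $H^1(G, GL_n(E))$ is naturally identified with the set of isomorphism classes of objects in the homotopy fixed point category $GL_n(E)^{hG} = \cat(\tG, GL_n(E))^G$, where $GL_n(E)$ is regarded as a one-object groupoid with $G$-action by entrywise action of $G$ on matrices. Next, applying \myref{CrossR}, these isomorphism classes are in bijection with isomorphism classes of $E_G[G]$-module structures on the underlying $E$-module $E^n$, or equivalently (via \myref{semilinear}), with isomorphism classes of semilinear $G$-actions on the $E$-module $E^n$.

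Second, I would bring in Galois descent. Every such semilinear $G$-action on $E^n$ produces an object of $\cat(\tG, \sP(E))^G$ whose underlying $E$-module is (noncanonically) $E^n$. By the Galois-descent equivalence \myref{proj galois descent},
\[
\sP(F) \simeq \cat(\tG, \sP(E))^G,
\]
so each such object corresponds to a finitely generated projective $F$-module $M$ together with a specified $E$-linear isomorphism $M \otimes_F E \cong E^n$. Since $F$ is a field, $M$ is free, say $M \cong F^m$, and tensoring up to $E$ forces $m = n$. Hence every object of $\cat(\tG, \sP(E))^G$ with underlying $E$-module $E^n$ is isomorphic in the homotopy fixed point category to the one obtained by descent from $F^n$, namely $E^n$ with its standard (diagonal) semilinear $G$-action. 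Since all semilinear structures on $E^n$ lie in a single isomorphism class, the set $H^1(G, GL_n(E))$ is trivial.

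The only delicate point, and the step I would write out most carefully, is the bookkeeping that an isomorphism in $\cat(\tG, \sP(E))^G$ between two semilinear structures on $E^n$ really does correspond to an $E$-linear automorphism of $E^n$ (i.e.\ an element of $GL_n(E)$) equivariant with respect to the two actions — this is what turns ``descent to a common $F^n$'' into the statement that the corresponding cocycles are cohomologous. This falls out of unraveling the morphism description in \myref{homotopy fixed points} applied to $\sC = \sP(E)$ and specializing to objects whose underlying $E$-module is $E^n$, and it is precisely the content of the equivalence of categories in \myref{proj galois descent} restricted to one isomorphism class. The rest of the proof is then purely formal once the identification $H^1(G,GL_n(E)) \cong \pi_0\bigl(\iso\, GL_n(E)^{hG}\bigr)$ and the descent equivalence are in hand.
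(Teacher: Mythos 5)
Your proof is correct and follows essentially the same route as the paper's: both deduce triviality of $H^1(G,GL_n(E))$ from faithfully flat Galois descent together with the identification of $H^1(G,GL_n(E))$ with isomorphism classes in the homotopy fixed point category $GL_n(E)^{hG}$, i.e.\ with isomorphism classes of semilinear $G$-module structures on $E^n$, and then observe that every such structure descends to $F^n$. The only stylistic difference is that you invoke \myref{CrossR} and \myref{proj galois descent} directly to make these identifications, while the paper packages the same content via the equivariant skeleton $\coprod_n GL_n(E)$ and the crossed-functor category $\cat_\times(G,GL_n(E))$; these are interchangeable presentations of the same argument.
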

 
 \begin{proof}
 Faithfully flat descent gives an equivalence of categories between the category of $F$-modules and the category of descent data, which in turn is equivalent to the category of $E$-modules with semilinear $G$-action. From the proof of \myref{equiv chain}, the latter is just $\cat(\tG, \iso \Mod(E))^G$. Note that $\coprod_n GL_n(E)$ is a skeleton of  $\iso \Mod(E)$, and by \myref{homotopy fixed point weak equiv} we have a weak equivalence
 $$\cat(\tG, \iso \Mod(E))^G \simeq \cat(\tG, \coprod_n GL_n(E))^G.$$ Using that $\coprod_n GL_n(F)$ is a skeleton for $\iso \Mod(F)$  and  the description of the fixed points of $\cat(\tG, GL_n(E))$ given in \myref{crossedhom},  we have $$\coprod_n GL_n(F)  \simeq \coprod_n \cat_\times(G, GL_n(E)),$$ where $\cat_\times(G, GL_n(E))$ is the crossed functor category defined in \myref{crossedhom}. 
 Therefore, for any summand in the coproduct, there is only one isomorphism class of objects. Now we note that the isomorphism set of objects in $\cat_\times(G, GL_n(E))$ is precisely the first nonabelian cohomology set $H^1(G, GL_n(E))$.
 \end{proof}

 This gives the following result, which we could have used directly to conclude that for a finite  Galois extension of fields $E/F$ with Galois group $G$ we have an equivalence $\KK_G(E)^G\simeq \KK(F).$

\begin{proposition}\mylabel{field equiv}
There is a symmetric monoidal weak $G$-equivalence $$\io \colon \sGL(E)\ra \cat(\tG, \sGL(E)).$$
\end{proposition}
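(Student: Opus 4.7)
The plan is to verify the symmetric monoidal part essentially for free, then reduce the weak $G$-equivalence claim to Serre's Hilbert 90 via Proposition \myref{H1 trivial}. First, $\io$ comes from the unique equivariant projection $\tG \to *$, and since the symmetric monoidal structure on $\cat(\tG, \sGL(E))$ is inherited pointwise from $\sGL(E)$ (using that $\cat(\tG,-)$ preserves products), $\io$ is automatically strict symmetric monoidal. So the content lies in showing $\io$ is a weak $G$-equivalence.

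By Observation \myref{hfixedpoints} and the fact that the classifying space functor commutes with fixed points, it suffices to show that $\io^H \colon \sGL(E)^H \to \cat(\tG, \sGL(E))^H$ induces a weak equivalence on classifying spaces for each subgroup $H \subseteq G$. The key simplifying observation is that $\tG$ is a connected groupoid, so any functor $\tG \to \coprod_n GL_n(E)$ factors through exactly one summand. Since the $G$-action on $\sGL(E)$ is entrywise on matrices and preserves the integer $n$, each summand $GL_n(E)$ is $G$-invariant and we get a $G$-equivariant identification
\[ \cat(\tG, \sGL(E)) \;\cong\; \coprod_{n \geq 0} \cat(\tG, GL_n(E)), \]
compatible with $\io$ summand-by-summand. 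Thus the problem reduces to showing that $\io \colon GL_n(E) \to \cat(\tG, GL_n(E))$ is a weak $G$-equivalence for every $n$.

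For the one-object groupoid $GL_n(E)$, Proposition \myref{H1 trivial} gives the exact criterion: $\io^H$ is an equivalence of categories precisely when the first nonabelian cohomology set $H^1(H; GL_n(E))$ is trivial. Since $E/F$ is a $G$-Galois extension (the setting of the section), the subextension $E/E^H$ is an $H$-Galois extension, so applying Serre's generalization of Hilbert 90 (Theorem \myref{serre}) to this subextension produces exactly the needed vanishing $H^1(H; GL_n(E)) = *$.

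The main obstacle is really bookkeeping rather than mathematical: I must make sure the coproduct decomposition above is genuinely $G$-equivariant (clear from the fact that the $G$-action preserves rank), and that Theorem \myref{serre}, which was stated for the Galois group $G$ of $E/F$, applies equally to any subgroup $H$ acting on $E$ with fixed field $E^H$. The rest is a straightforward assembly of previously established results.
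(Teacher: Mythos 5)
Your proof is correct and takes the same route as the paper: reduce via Proposition \myref{H1 trivial} to the vanishing of $H^1(H; GL_n(E))$ and then invoke Serre's Hilbert 90 (Theorem \myref{serre}). You are in fact a bit more careful than the paper's terse argument, which writes $H^1(G, GL_n(E))$ where the cited criterion actually involves $H^1(H, GL_n(E))$ --- you correctly supply the needed step by noting $E/E^H$ is $H$-Galois so Serre's theorem applies for each $H$, and you also make the $G$-equivariant coproduct decomposition of $\cat(\tG, \sGL(E))$ explicit.
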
 

\begin{proof}
By \myref{H1 trivial}, we have that the map  $$\io \colon \sGL(E)^H\ra \cat(\tG, \sGL(E))^H$$ is an equivalence precisely when  $H^1(G, GL_n(E))$ is trivial, and this is true in this case by \myref{serre}. Now also note that $$\sGL(E)^H= \sGL(E^H).$$ This completes the proof.
\end{proof}

\begin{remark}
Let $k$ be a field with separable closure $\bar{k}$ and absolute Galois group $\Gal(\bar{k}/k)$, which acts continuously on the discrete group $\bar{k}^\times$, or more generally, on $GL_n(\bar{k}).$ We have that $H^1(\Gal(\bar{k}/k), GL_n(\bar{k})) = \text{colim }H^1( \Gal(L/k), GL_n(\bar{k})),$ where the inclusions of $H^1$ groups in the colimit come from the inclusions of cocycles (continuous crossed homomorphisms in this case).  We get an equivalence analogous to the one in  \myref{field equiv}:
$$\mathscr{G}\mathscr{L}(k) \xra{\simeq} \cat(\Gal(\bar{k}/ k), \mathscr{G}\mathscr{L}(\bar{k}))^{\Gal(\bar{k},k)}.$$

However, the equivariant infinite loop space machines, as currently developed, do not apply to profinite groups, so we cannot pass from this statement to a spectrum level statement. We do hope, though, that in future work we will be able to generalize the delooping machines to profinite groups. 

\end{remark}

\subsection{Quillen-Lichtenbaum formulation}\mylabel{section:QL}

Let $E/F$ be a finite  Galois extension of fields with Galois group $G=\Gal(E/F)$ on $E$.  Since $G$ acts on $E$, it acts by functoriality on the spectrum $\KK E$, so $\KK E$ is a naive $G$-spectrum. The fixed points of this naive $G$-spectrum are easily seen to be $(\KK E)^G\simeq \KK F$. The initial Quillen-Lichtenbaum conjecture was that the map of spectra $$\KK F\simeq  \KK E^G\to \KK E^{hG}$$ is an equivalence, where $\KK E^{hG}$ denotes the homotopy fixed points of the naive $G$-spectrum $\KK E$. However, low dimensional examples disprove this conjecture as stated even after $p$-adic completion for a prime $p$ \cite{mitchell}. Thomason showed that under some technical hypotheses this map  becomes an equivalence only after reducing mod a prime power and inverting the Bott element \cite{Thomason}.\footnote{The Quillen-Lichtenbaum conjecture has been refined to the statement that the map from fixed points to homotopy fixed points of the separable closure of a field for the action of the absolute Galois group is an equivalence mod $p$ on a connected cover, and was resolved in the two primary case by Rosenschon and  \O stv\ae r in \cite{rosenschorn}. The result at the prime 2 was already known by work of Levine for fields of finite cohomological dimension \cite{levine}, and it is known that the proof of the Bloch-Kato would extend the result  for such fields to other primes.} 

One might ask the same question of the map from fixed points to homotopy fixed points of the genuine $K$-theory $G$-spectrum of $\KK_G(E)$ and hope that it becomes an equivalence there, but we show that that map is equivalent to the one in terms of naive spectra from the original Quillen-Lichtenbaum conjecture. In \cite{jeremiah}, J. Heller and J. Hornbostel  give a definition of a genuine equivariant algebraic $K$-theory spectrum by constructing a certain special $\sF_G$-space, and they show that for their construction the map from fixed points to homotopy fixed points  becomes an equivalence when a lift of the Bott element is inverted. We believe that our definitions could suitably be related and then the result of this section could be viewed as a reflection of their theorem.

The definition of fixed points and homotopy fixed points of a genuine $G$-spectrum is just as the fixed points, or homotopy fixed points, respectively, of the underlying naive $G$-spectrum. However, for an arbitrary ring with $G$-action, the underlying naive $G$-spectrum of $\KK_G(R)$ is not necessarily equivalent to the naive $G$-spectrum $\KK(R)$. 
We review the precise definition of homotopy fixed points of a fibrant genuine orthogonal $G$-spectrum $X$. Note that a fibrant spectrum is an $\OM$-$G$-spectrum.
\begin{definition}
Let $X$ be a fibrant genuine  orthogonal $G$-spectrum. Then $X^{hG}$ is defined to be the fixed point spectrum $$\Map_\ast(EG_+, X)^G= (i^\ast\Map_\ast(EG_+, X))^G\simeq \Map_\ast(EG_+, i^\ast X)^G,$$ where $i^\ast$ is the forgetful functor from genuine to naive $G$-spectra.
\end{definition}

Just as for $G$-spaces and naive $G$-spectra, we have a natural map $X^G\to X^{hG},$ induced by the projection $EG_+\ra S^0$.

\begin{proposition}
Let $E/F$ be a finite Galois extension with Galois group $G$. The map from fixed points to homotopy fixed points of genuine $G$-spectra $$\KK_G(E)^G \to \KK_G(E)^{hG}$$ is equivalent to the map from fixed points to homotopy fixed points of naive $G$-spectra $$\KK F \to \KK^{hG}$$ in the sense that the following diagram commutes and the vertical maps are weak equivalences
\[\xymatrix{
\KK F\ar[rr] \ar[dd]^\simeq && \KK E^{hG} \ar[dd]^\simeq\\
&&&\\
\KK_G(E)^G \ar[rr] && \KK_G(E)^{hG}
}\]
\end{proposition}

From \myref{galois example}, we have that $\KK F\simeq \KK_G(E)^G$. We show in the next proposition that $\KK_G(E)^{hG} \simeq \KK(E)^{hG}$, where on the left hand side we are taking homotopy fixed points of a genuine $G$-spectrum, and on the right hand side we are taking homotopy fixed points of a naive $G$-spectrum. As pointed out above, this amounts to comparing the underlying naive $G$-spectra, and noting that in the case of a Galois extension they are equivalent. This proves the above proposition, and thus we recover the initial form of the Quillen-Lichtenbaum conjecture as a statement about genuine $G$-spectra. 

\begin{lem}
For a Galois extension $E/F$ with Galois group $G$, the homotopy fixed points of the naive $G$-spectrum $\KK E$ and the homotopy fixed points of the genuine $G$-spectrum $\KK_G(E)$ are equivalent.
\end{lem}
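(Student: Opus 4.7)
The first observation is that homotopy fixed points depend only on the underlying naive $G$-spectrum: by the very definition $X^{hG}=\Map_\ast(EG_+,i^\ast X)^G$, so it is enough to produce a natural weak equivalence of \emph{naive} $G$-spectra $i^\ast \KK_G(E)\simeq \KK E$. Here $\KK E$ denotes the naive $G$-spectrum $\bK(\iso\sP(E))$ obtained by functoriality from the $G$-action on $E$, and $i^\ast$ is the forgetful functor from genuine to naive $G$-spectra.

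The plan is to assemble this naive equivalence from four pieces. By Theorem \myref{prop3} applied to the $\sO_G$-category $\cat(\tG,\iso\sP(E))$ one obtains a natural weak equivalence of naive $G$-spectra
\[
\bK\bigl(\io^\ast\cat(\tG,\iso\sP(E))\bigr)\;\simeq\; i^\ast\bK_G\bigl(\cat(\tG,\iso\sP(E))\bigr)=i^\ast \KK_G(E).
\]
Since $E$ is a field, $\sP(E)=\sF(E)$, so \myref{skeleton equiv} gives a weak $G$-equivalence $\cat(\tG,\iso\sP(E))\to \cat(\tG,\sGL(E))$. The crucial step is \myref{field equiv}, which via Serre's generalization of Hilbert 90 (Theorem \myref{serre}) gives a symmetric monoidal weak $G$-equivalence
\[
\io\colon \sGL(E)\to \cat(\tG,\sGL(E)).
\]
Finally, $\sGL(E)\hookrightarrow \iso\sP(E)$ is the inclusion of a skeleton and hence a nonequivariant equivalence; with the $G$-action defined entrywise on $\sGL(E)$, an inverse equivalence is equivariant (the argument at the start of \S\myref{skeleton}), so $\bK$ sends it to an equivalence of naive $G$-spectra. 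Stringing these together,
\[
\KK E=\bK(\iso\sP(E))\simeq \bK(\sGL(E))\simeq \bK(\cat(\tG,\sGL(E)))\simeq \bK(\cat(\tG,\iso\sP(E)))\simeq i^\ast \KK_G(E),
\]
using that $\bK$ is a homotopical functor on naive permutative $G$-categories (it takes weak $G$-equivalences to weak equivalences of naive $G$-spectra). Applying $\Map_\ast(EG_+,-)^G$ then yields the desired equivalence $\KK E^{hG}\simeq \KK_G(E)^{hG}$.

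The main content is in the third step: Hilbert 90 is precisely the input which forces the canonical comparison $\sGL(E)\to\cat(\tG,\sGL(E))$ to be a $G$-equivalence on all fixed points (cf.\ Proposition \myref{H1 trivial}). Without the Galois hypothesis this step fails, and the underlying naive $G$-spectrum of $\KK_G(E)$ need not agree with $\KK E$. Everything else is either formal (\myref{prop3}, definition of $(-)^{hG}$) or a routine skeleton replacement (\myref{skeleton equiv}), so the only subtlety is checking that each of the weak $G$-equivalences above is of the sort that $\bK$ preserves as a naive $G$-spectrum map, which follows from the fact that $\bK$ is built monadically out of functors that preserve weak $G$-equivalences levelwise.
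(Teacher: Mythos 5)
Your proposal is correct and follows essentially the same route as the paper: reduce to showing $i^\ast\KK_G(E)\simeq\KK E$ as naive $G$-spectra, apply \myref{prop3} for the formal comparison between the genuine and naive machines, and use \myref{field equiv} (via Serre's Hilbert 90) as the crucial content step. The only difference is that you spell out the skeleton-replacement bookkeeping between $\iso\sP(E)$ and $\sGL(E)$, which the paper treats implicitly by working directly with $\sGL(E)$.
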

\begin{proof}

We will show that we have an equivalence of naive $G$-spectra $i^\ast \KK_G(E)\simeq \KK E$, which will imply the result. Recall that $\KK E$ is defined as the $K$ theory of the naive permutative $G$-category $\sG\sL(E)$, while $\KK_G(E)$ is the equivariant algebraic $K$-theory of the genuine permutative $G$-category $\cat(\tG, \sG\sL(E))$. We have a map
$$\KK(\sG\sL(E))\ra \KK(i^\ast(\cat(\tG, \sG\sL(E)))\xra{\simeq} i^\ast \KK_G(\cat(\tG, \sG\sL(E))),$$ where the second map is shown to be an equivalence in \cite{GM3}.

By  \myref{field equiv}, there is a symmetric monoidal weak $G$-equivalence $$\sG\sL(E)\simeq i^\ast(\cat(\tG, \sG\sL(E)),$$ so the first map is also an equivalence.

\end{proof}

\subsection{Carlsson's assembly map from the equivariant perspective}

There has been a long standing program initiated and lead by G. Carlsson of studying the $K$-theory of fields motivated by the concept of descent and the Quillen-Lichtenbaum conjecture. 
Suppose that $E/F$ is a Galois extension with Galois group $G$. We can consider the \emph{assembly map} induced in $K$-theory by  extension of scalars \begin{equation}\mylabel{assembly 1} \mathrm{Rep}_F[G] \xra{E\ox_F -}  V^G(E),\end{equation}
 from the category of continuous finite dimensional $G$-representations in $F$, which is denoted by $\Rep_F[G]$ in \cite{carlssonconj}, to the category of finite dimensional $E$-vector spaces with semilinear $G$-action, or equivalently, the category of $E$-vector spaces with descent data for the faithfully flat extension $E/F$, denoted by $V^G(E)$ in \cite{carlssonconj}, and which is equivalent to the category $\Vect(F)$ of finite dimensional $F$-vector spaces by \myref{galois descent}. Since all these categories are nonequivariant symmetric monoidal categories, their $K$-theory spectra are defined by using standard nonequivariant infinite loop space machines such as the May \cite{MayGeo} or the Segal machine \cite{segal}. 

Carlsson conjectured that for the Galois extension $\bar{F}/F$ with absolute Galois group $G$ and an algebraically closed subfield $k\hookrightarrow F$ the composite  \begin{equation}\mylabel{assembly 2}\mathrm{Rep}_k[G] \ra \mathrm{Rep}_F[G] \xra{\bar{F}\ox_F -}  V^G(E)\simeq \Vect(F).\end{equation}
induces an equivalence on $K$-theory after derived completion, i.e.,
$$\KK(\mathrm{Rep}_k[G])^\wedge_{\al_p} \to \KK(F)^\wedge_{\al_p} \simeq \KK(F)^\wedge_p.$$ 
 Carlsson defines \emph{the derived completion} of a ring spectrum in \cite{derivedcompletion} and shows that for $\KK F$ the derived completion agrees with the Bousfield-Kan $p$ completion, which accounts for the identification made above on the right hand side. This conjecture has recently been proved by G. Carlsson for pro-$l$ absolute Galois groups in \cite{carlsson1} and \cite{carlsson2}. Attacking the problem from the perspective of full-fledged equivariant spectra might eventually shed some light on the general case. C. Barwick has announced  such a proof from an $\oo$-categorical point of view using spectral Mackey functors. 

The key to any equivariant point of view is, of course, to interpret the assembly map as the fixed point map of an equivariant map between genuine $G$-spectra. We show how to construct such an equivariant map. On the categorical level our definition makes sense for the separable Galois extension of a field and the profinite Galois group; however, we only know how to obtain a map of genuine $G$-spectra for a finite Galois extension at the moment, because of the limitations of the equivariant infinite loop space machines. It is very easy to see that the source and target of the assembly map are instances of fixed points of our construction of equivariant algebraic $K$-theory, but it is non-trivial to see that there is an equivariant map which restricts to the assembly map on fixed points. This relies on the fact that our construction of equivariant algebraic $K$-theory turns pseudo-equivariant maps to on the nose equivariant maps.

\begin{proposition}Suppose $E/F$ is a finite Galois extension with Galois group $G$.  There is a $G$-map of genuine $G$-spectra 
$$\KK_G(F)\to \KK_G(E),$$ which, on $G$-fixed points, restricts to the assembly map $$\KK(\mathrm{Rep}_F[G])\to \KK(V^G(E)).$$ \end{proposition}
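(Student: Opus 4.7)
The plan is to exhibit the map by applying the functoriality of $\KK_G$ to the inclusion of $G$-rings $\iota \colon F \hookrightarrow E$, where $F$ carries the trivial $G$-action and $E$ carries the Galois action. Since $F = E^G$ by hypothesis, every element of $F$ is $G$-fixed in $E$, so $\iota$ is a $G$-equivariant ring map. The functoriality theorem then produces a $G$-equivariant map of genuine $G$-spectra $\KK_G(F) \to \KK_G(E)$, realized at the categorical level by the pseudo equivariant extension of scalars functor $\sP(F) \to \sP(E)$, $M \mapsto M \otimes_F E$, strictified to an on the nose equivariant functor of genuine permutative $G$-categories $\cat(\tG, \iso\sP(F)) \to \cat(\tG, \iso\sP(E))$ via \myref{pseudo equiv}.

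To identify the $G$-fixed point map I would use \myref{prop2}, which says that $\bK_G$ commutes with $(-)^G$ up to equivalence. Because $G$ acts trivially on $F$, \myref{trivial action} gives $\cat(\tG, \iso\sP(F))^G \cong \cat(G, \iso\sP(F))$, the category of functors $G \to \iso\sP(F)$, which is exactly $\Rep_F[G]$; hence $\KK_G(F)^G \simeq \KK(\Rep_F[G])$. On the other side, the proof of \myref{equiv chain}, carried through for projectives as in \myref{equiv chain proj} (which applies directly here since $E$ is a field, so every finitely generated $E$-module is projective), identifies $\cat(\tG, \iso\sP(E))^G$ with the category of finite dimensional $E$-vector spaces equipped with a semilinear $G$-action, which is precisely $V^G(E)$; hence $\KK_G(E)^G \simeq \KK(V^G(E))$.

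It remains to check that, under these identifications, the induced functor on $G$-fixed points is Carlsson's assembly map. Unwinding the construction of $\tilde{\Theta}^G$ from Section 3 for the pseudo equivariant functor $\Theta = -\otimes_F E$: a representation $M \in \Rep_F[G]$, viewed as a $G$-fixed object $(M, f)$ with $f(g)\colon gM = M \to M$ recording the representation structure (the equality $gM = M$ holds since $G$ acts trivially on $\sP(F)$), is sent to $(M \otimes_F E, f_\theta)$, where $f_\theta(g)$ is the composite of $\Theta(f(g)) = f(g) \otimes \id_E$ with the pseudo equivariance isomorphism $\theta_g$ produced in the functoriality proof, explicitly $m \otimes s \mapsto m \otimes s^g$. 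Reinterpreting $(M \otimes_F E, f_\theta)$ as an object of $V^G(E)$ via the equivalence in the proof of \myref{equiv chain} yields the $E$-module $M \otimes_F E$ equipped with the diagonal semilinear $G$-action $g \cdot (m \otimes s) = f(g)(m) \otimes s^g$, combining the representation structure on $M$ and the Galois action on $E$. This is exactly the functor defining Carlsson's assembly map.

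The main obstacle, modest as it is, lies in the last step: carefully tracing the identifications of Section 4 together with the explicit description of $\tilde{\Theta}^G$ from Section 3, and confirming that the composite of the representation structure $f(g)$ with the Galois theoretic pseudo equivariance isomorphism recovers the standard diagonal semilinear $G$-action on $M \otimes_F E$ used in the definition of $V^G(E)$, rather than any twisted variant arising from the two possible conventions for the cocycle condition.
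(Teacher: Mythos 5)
Your proposal is correct and takes essentially the same route as the paper: both hinge on recognizing that extension of scalars $-\otimes_F E$ is pseudo equivariant, strictifying via $\cat(\tG,-)$ (\myref{pseudo equiv}), and identifying the source and target of the $G$-fixed point map via \myref{trivial action} and \myref{equiv chain}. Your version is a touch more economical in deriving the map itself by simply citing the functoriality theorem for $\KK_G$ (which encapsulates the pseudo equivariance of extension of scalars), and a touch more careful than the paper in actually unwinding the explicit formula for $\tilde\Theta^G$ from Section 3 to confirm that the fixed-point functor is literally $M \mapsto M\otimes_F E$ with the diagonal semilinear action — a step the paper's proof asserts without tracing through — so this is a modest but genuine improvement in completeness.
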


\begin{proof}
It is easy to identify the source and target. From \myref{equiv chain}, the category $V^G(E)$ of $E$-vector spaces with semilinear $G$-action is isomorphic to  $\cat(\tG, \Vect(F))^G$. As we have remarked before, if $G$ is acting trivially on $F$, $\cat(\tG, \Vect(F))^G\cong  \cat(G, \Vect(F))$, the category of $F$-vector spaces with $G$-linear action, i.e., the category of continuous $G$-representations over $F$. Thus $\mathrm{Rep}_F[G]\cong \cat(\tG, \Vect(F))^G$.
Therefore, the map from equation (\ref{assembly 1}), translates to a map $$\cat(\tG, \Vect(F))^G \to \cat(\tG, \Vect(E))^G.$$
\noindent It remains to show that we indeed have an equivariant map  $$\cat(\tG, \Vect(F)) \to \cat(\tG, \Vect(E)),$$ inducing this map on fixed points.

Note that the extension of scalars map $\Vect(F)\xra{E\ox_F -} \Vect(E)$ is not a $G$-map. The action of $G$ on $\Vect(F)$ is trivial; however, for $V\in \Vect(F)$, the object $E\otimes_F V$ is not $G$-fixed: $$E\otimes_F V \neq g(E\otimes_F V)$$ since they have different scalar multiplication. However, we have shown that the extension of scalars map is pseudo equivariant, thus by \myref{pseudo equiv} this induces an equivariant map  $\cat(\tG, \Vect(F))\ra \cat(\tG, \Vect(E)),$ which on application of the equivariant infinite loop space machine $\KK_G$ yields a $G$-map $$\KK_G(F)\to \KK_G(E),$$ which restricts to the assembly map on fixed points.


\end{proof}

\section{Equivariant algebraic $K$-theory of topological rings}
We describe how our construction of equivariant algebraic $K$-theory recovers the connective covers of the the well-known equivariant topological real and complex $K$-theories $KU_G$ and $KO_G$, defined in \cite{equivktheory} and Atiyah's Real $K$-theory $KR$, defined in \cite{atiyah}. When Atiyah introduced  $KR$, he described it as a mixture of real $K$-theory $KO$ and equivariant topological $K$-theory $KU_G$ and $KO_G$.  We show that they all fit under the unifying framework of equivariant $K$-theory developed in this paper. 

In this section, any time we refer to topological $K$-theory, we mean the connective version.  We denote by $ku_G$, $ko_G$ and $kr$ the connective covers of $KU_G$, $KO_G$, and $KR$, respectively. Whereas the first two are well studied, the latter is not so well-known. A construction of the connective cover of $KR$ is given, for example, in \cite{dugger}.

Recall that as topological groups $GL_n(\bC)\simeq U_n$ and $GL_n(\bR)\simeq O_n$, and if one takes the topology into account when forming the bar construction, $BGL_n(\bC)$ and $BGL_n(\bR)$ are equivalent to the Grassmanians $BU_n$ and $BO_n$. We recall that the representing spaces for complex and real topological $K$-theory, namely $BU\times \bZ$ and $BO\times \bZ$, are the group completions of the topological monoids $\coprod BU_n$ and $\coprod BO_n$, respectively. Therefore,

 $$\KK^{top}(\bC)\simeq ku\ \text{   and     } \  \KK^{top}(\bR)\simeq ko,$$ where $\KK^{top}$ is algebraic $K$-theory for which the topology on the ring is taken into account when forming the bar construction.

We note that $ku_G$ and $ko_G$ are represented by the $G$-spaces which are the group completions of the monoids of   equivariant bundles corresponding to split extensions
\begin{equation}\mylabel{complex} 1\ra U_n\ra U_n\times G \ra G\ra 1\end{equation}
 and \begin{equation}\mylabel{real}1\ra O_n\ra O_n\times G \ra G\ra 1, \end{equation} respectively.

Consider the topological rings $\bC$ and $\bR$ with trivial $G$-action for any finite group $G$. Then $\KK_G(\bC)$ and $\KK_G(\bR)$ are genuine $\OM$-$G$-spectra with zeroth spaces given by the group completions of $\coprod B\cat(\tG,  GL_n(\bC))$, and $\coprod B\cat(\tG,  GL_n(\bR))$, respectively, where the topology of $GL_n(\bC)$, and $GL_n(\bR)$, respectively, is taken into account when forming the classifying space.  By \myref{bundle thm}, these are the monoids of classifying spaces of $(G,U_n)$-bundles, and $(G, O_n)$-bundles, respectively, under Whitney sum. Note that here it was crucial that in the hypotheses of \myref{bundle thm}, even though the group of equivariance $G$ has to be discrete or finite, the structure group of the bundle is allowed to be compact Lie. Therefore, we  obtain the following theorem, where $\KK_G^{top}$ is $G$-equivariant algebraic $K$-theory for which the topology of the ring is taken into account.

\begin{theorem}\mylabel{ku ko}
Consider  the topological rings $\bC$ and $\bR$ with trivial $G$-action for any finite group $G$. We have equivalences of connective $\OM$-$G$-spectra 
$$\KK^{top}_G(\bC)\simeq ku_G \text{   and     } \  \KK^{top}_G(\bR)\simeq ko_G.$$
\end{theorem}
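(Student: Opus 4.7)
The plan is to identify both sides as equivalent outputs of the same equivariant infinite loop space machine fed equivalent input, rather than just matching zeroth spaces. I will treat the complex case; the real case is identical with $U_n$ replaced by $O_n$.

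First, since $\bC$ is a field, $\sP(\bC)=\sF(\bC)$, so by Proposition \ref{skeleton equiv} the inclusion of the skeleton induces a (topological) weak $G$-equivalence $\cat(\tG,\iso\sF(\bC))\simeq \cat(\tG,\sGL(\bC))$. Passing this through the equivariant infinite loop space machine and using its homotopy invariance, $\KK^{top}_G(\bC)$ is equivalent to $\bK_G\bigl(\cat(\tG,\sGL(\bC))\bigr)$, where $\sGL(\bC)=\coprod_n GL_n(\bC)$ is topologized via the usual topology on $GL_n(\bC)$. Because $G$ acts trivially on $\bC$, it also acts trivially on each $GL_n(\bC)$, so the split extension implicit in the bundle theory is $1\to GL_n(\bC)\to GL_n(\bC)\times G\to G\to 1$, which under the homotopy equivalence $GL_n(\bC)\simeq U_n$ becomes precisely the extension (\ref{complex}) describing $G$-equivariant complex vector bundles.

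Next I invoke the bundle-theoretic interpretation. By Theorem \ref{bundle thm}, $B\cat(\tG,U_n)$ is a classifying space for $(G,U_n\times G)$-bundles, i.e.\ $n$-dimensional $G$-equivariant complex vector bundles. Under Whitney sum, $\coprod_n B\cat(\tG,U_n)$ is a homotopy commutative topological $G$-monoid, and by Remark \ref{gpcompl} its equivariant group completion is $\OM BB\cat(\tG,\sGL(\bC))$. By Theorem \ref{plus equals ss}, this agrees on positive equivariant homotopy groups with the zeroth space of $\KK^{top}_G(\bC)$; the $\pi_0$ correction vanishes here because $\bC$ is a field (every $GL_n(\bC)$-bundle of rank $n$ embeds as a summand of a rank $n+k$ one for all $k$, so cofinality gives the equivalence of $\pi_0$ as well).

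The final step is to match this with $ku_G$. The connective equivariant complex $K$-theory spectrum $ku_G$ is, in any of its standard models (Segal's from \cite{equivktheory} via $\GA_G$-spaces, or the operadic description), the orthogonal $G$-spectrum obtained by applying an equivariant infinite loop space machine to the symmetric monoidal $G$-category of finite dimensional complex $G$-representations and their isomorphisms, whose classifying $G$-space is precisely $\coprod_n B\cat(\tG,U_n)$ (the homotopy fixed points identify with representation categories by Remark \ref{trivial action} combined with the interpretation of $\cat(\tG,GL_n(\bC))^H$ as $H$-equivariant $U_n$-bundles). Thus both $\KK^{top}_G(\bC)$ and $ku_G$ arise by applying the same machine $\bK_G$ (equivalently $\bS_G$, by \cite{MMO}) to equivalent genuine permutative $G$-category input; by the fact that the machines preserve weak equivalences, we obtain an equivalence of genuine $\OM$-$G$-spectra.

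The main obstacle, and the one that needs care, is matching the \emph{spectrum-level} structure rather than only the zeroth $G$-space: a priori one only sees an equivalence of equivariant infinite loop spaces, and lifting this to an equivalence of genuine orthogonal $G$-spectra requires that $ku_G$ itself be constructed by the same (or a Quillen-equivalent) equivariant infinite loop space machine. This is standard for Segal's definition, and the coincidence of Segal's and May's equivariant machines from \cite{MMO} then delivers the equivalence as genuine orthogonal $G$-spectra.
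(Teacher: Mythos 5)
Your proof takes essentially the same route as the paper: reduce via the skeleton (using that $\bC$ is a field so $\sP=\sF$) to $\cat(\tG,\sGL(\bC))\simeq\coprod_n\cat(\tG,U_n)$ and invoke Theorem \ref{bundle thm} to identify the group completion of the resulting bundle-classifying $G$-monoid with the representing $G$-space of $ku_G$. You helpfully make explicit the spectrum-level (rather than merely zeroth-space) comparison by noting that both sides arise from the same equivariant infinite loop space machine fed equivalent genuine permutative $G$-category input, a point the paper's brief argument leaves implicit.
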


In the definition of $KR$, the bundles corresponding to split exact sequences (\ref{complex}) are replaced by equivariant $(C_2, U_n\rtimes C_2)$-bundles corresponding to split exact sequences 
\begin{equation}\mylabel{kr} 1\ra U_n\ra U_n\rtimes C_2 \ra C_2 \ra 1,\end{equation}
where the cyclic group of order 2, $C_2$, acts on $U_n$ by complex conjugation.

Atiyah shows a ``Real" version of Bott periodicity, which gives that the representing space for $KR$ has deloopings with respect to  $C_2$-representations, and thus $KR$ represents a genuine $\OM$-$C_2$-spectrum. Of course, \cite{atiyah} does not mention spectra and instead states the result in terms of a periodic $RO(C_2$)-graded cohomology theory.
 
 The zeroth space of the connective spectrum $kr$ is the group completion of the topological $C_2$-monoid of $(C_2, U_n\rtimes C_2)$-bundles, which by \myref{bundle thm} and because the equivalence of topological groups $GL(\bC)\simeq U_n$ is $C_2$-equivariant, is equivalent to $\coprod B\cat(\tG, GL_n(\bC))$. Therefore, we get the following theorem. 

 \begin{theorem}\mylabel{kr}
 Let $\bC$ be the topological ring of complex numbers with conjugation action by $C_2$. Then there is an equivalence of connective $\OM$-$C_2$-spectra $$\KK^{top}_{C_2}(\bC)\simeq kr.$$
 \end{theorem}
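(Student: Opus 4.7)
The plan is to mirror the argument sketched for Theorem \myref{ku ko}, with the only substantive change being that $C_2$ now acts nontrivially by entrywise complex conjugation. First I would unwind the definition: $\KK^{top}_{C_2}(\bC)$ is obtained by applying the equivariant infinite loop space machine to $\cat(\widetilde{C_2}, \iso\sP(\bC))$, retaining the topology on $\bC$ and hence on $GL_n(\bC)$. Since $\bC$ is a field, $\sP(\bC)=\sF(\bC)$, so combining the skeleton equivalence \myref{skeleton equiv} with \myref{iso}, I can replace $\cat(\widetilde{C_2}, \iso\sP(\bC))$ by the weakly $C_2$-equivalent genuine permutative $C_2$-category $\cat(\widetilde{C_2}, \sGL(\bC))$. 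The zeroth space of $\KK^{top}_{C_2}(\bC)$ is therefore the equivariant group completion of the topological $C_2$-monoid $\coprod_n B\cat(\widetilde{C_2}, GL_n(\bC))$ under block sum.

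Next I would identify this with the bundle-theoretic monoid defining $kr$. By \myref{bundle thm}, the space $B\cat(\widetilde{C_2}, GL_n(\bC))$ classifies $(C_2, GL_n(\bC)\rtimes C_2)$-bundles, where $C_2$ acts on $GL_n(\bC)$ by entrywise complex conjugation, which is precisely the $C_2$-action induced from the conjugation action on $\bC$. The crux is then that the standard deformation retraction $GL_n(\bC)\simeq U_n$ is $C_2$-equivariant: polar decomposition commutes with entrywise complex conjugation since the conjugate of a unitary matrix is unitary and the conjugate of a positive definite Hermitian matrix is still positive definite Hermitian. This yields a level $C_2$-equivalence of classifying spaces $B\cat(\widetilde{C_2}, GL_n(\bC))\simeq B\cat(\widetilde{C_2}, U_n)$, where the right hand side is precisely the classifying space of the $(C_2, U_n\rtimes C_2)$-bundles of (\ref{kr}).

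Passing to the equivariant group completion over all $n$ under Whitney sum, the zeroth $C_2$-space of $\KK^{top}_{C_2}(\bC)$ becomes $C_2$-equivalent to the zeroth $C_2$-space of $kr$ as defined from Atiyah's Real bundles. The main obstacle is to upgrade this equivalence of zeroth spaces to an equivalence of genuine $\OM$-$C_2$-spectra, i.e.\ to match the representation-sphere deloopings produced by the Guillou--May machine with the deloopings of $kr$ coming from Atiyah's Real Bott periodicity. This follows formally because both sides are connective genuine $\OM$-$C_2$-spectra, and hence are determined up to equivalence by their zeroth $C_2$-spaces together with the requirement that the inclusion of the monoid on bundles is an equivariant group completion; this compatibility is ensured by the formal properties of the machine recorded in \myref{prop1}--\myref{prop3}, exactly as in the proof of \myref{ku ko}.
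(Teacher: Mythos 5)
Your proposal follows essentially the same route as the paper's own argument. The paper also works via \myref{bundle thm}, identifies the zeroth space of $kr$ as the equivariant group completion of the $C_2$-monoid of classifying spaces of $(C_2,U_n\rtimes C_2)$-bundles, and matches this with $\coprod_n B\cat(\widetilde{C_2}, GL_n(\bC))$ by citing, without elaboration, that the equivalence of topological groups $GL_n(\bC)\simeq U_n$ is $C_2$-equivariant. Your polar decomposition argument supplies a welcome verification of that last point, which the paper merely asserts.

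One caveat about your last paragraph. You correctly flag that an equivalence of zeroth $C_2$-spaces does not automatically yield an equivalence of genuine $\OM$-$C_2$-spectra, and that one must match the representation-sphere deloopings from the Guillou--May machine with those coming from Atiyah's Real Bott periodicity. But the justification you offer — that connective genuine $\OM$-$C_2$-spectra are ``determined up to equivalence by their zeroth $C_2$-spaces together with the requirement that the inclusion of the monoid on bundles is an equivariant group completion,'' citing \myref{prop1}--\myref{prop3} — is not quite right. A genuine $\OM$-$G$-spectrum is determined by its zeroth space only when that space is taken together with its full infinite loop $G$-space (grouplike $E_\infty$-$G$) structure, not merely as a $G$-space with a group-completion map from a monoid; and \myref{prop1}--\myref{prop3} concern products, fixed points, and the underlying naive spectrum, not uniqueness of deloopings. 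To be fair, the paper itself is silent on this point and simply asserts the spectrum-level conclusion once the zeroth spaces are matched, so your proof is no less complete than the paper's. The honest way to close the gap in either argument is to observe that $kr$ is characterized as the genuine connective $\OM$-$C_2$-spectrum obtained by applying an equivariant infinite loop space machine to the $E_\infty$-$C_2$-monoid of Real bundles under Whitney sum, and that this $E_\infty$-structure agrees (not just the underlying monoid) with the one on $\coprod_n B\cat(\widetilde{C_2}, GL_n(\bC))$ via the $C_2$-equivariant equivalence $GL_n(\bC)\simeq U_n$.
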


  \bibliographystyle{amsalpha}
  \bibliography{../references}

\end{document}